
\documentclass[11pt,leqno]{article}

\usepackage{amsthm,amsfonts,amssymb,amsmath,color}

\hyphenpenalty=5000
\tolerance=1000

\numberwithin{equation}{section}

\renewcommand\d{\partial}
\renewcommand\a{\alpha}
\renewcommand\b{\beta}
\renewcommand\o{\omega}
\newcommand\s{\sigma}
\renewcommand\t{\tau}
\newcommand\R{\mathbb R}\newcommand\N{\mathbb N}\newcommand\Z{\mathbb Z}
\newcommand\C{\mathbb C}

\def\g{\gamma}
\def\de{\delta}
\def\G{\Gamma}
\def\t{\tau}
\def\O{\Omega}
\def\th{\theta}
\def\k{\kappa}

\def\l{\lambda}

\def\epsilon{\varepsilon}
\def\e{\varepsilon}


\newcommand\br{\begin{rem}}
\newcommand\er{\end{rem}}
\newcommand\bp{\begin{pmatrix}}
\newcommand\ep{\end{pmatrix}}
\newcommand\be{\begin{equation}}
\newcommand\ee{\end{equation}}
\newcommand\ba{\begin{equation}\begin{aligned}}
\newcommand\ea{\end{aligned}\end{equation}}
\newcommand\ds{\displaystyle}
\newcommand\nn{\nonumber}





\newcommand{\TT}{{\mathbb T}}

\newcommand{\Id}{{\rm Id }}

\newcommand{\tr}{{\rm tr }}
\newcommand{\supp}{{\rm supp }}

\newcommand{\D}{{\mathcal D}}


\newcommand{\diag}{\text{\rm diag}}

\newcommand{\rank}{\text{\rm rank}}


\newtheorem{defi}{Definition}[section]
\newtheorem{theo}[defi]{Theorem}
\newtheorem{prop}[defi]{Proposition}
\newtheorem{lem}[defi]{Lemma}
\newtheorem{cor}[defi]{Corollary}
\newtheorem{rem}[defi]{Remark}
\newtheorem{ass}[defi]{Assumption}

\def\op{{\rm op} }
\def\ope{{\rm op}_\e^\psi }

\setlength{\evensidemargin}{0in} \setlength{\oddsidemargin}{0in}
\setlength{\textwidth}{5.5in} \setlength{\topmargin}{0in}
\setlength{\textheight}{8in}

\numberwithin{equation}{section}

\begin{document}

\title{Higher-order resonances and instability of high-frequency WKB solutions}

\author{Yong Lu\footnote{Mathematical Institute, Faculty of Mathematics and Physics, Charles University, Sokolovsk\'a 83, 186 75 Praha, Czech Republic, {\tt luyong@karlin.mff.cuni.cz}}}

\date{}

\maketitle

\begin{abstract}

This paper focuses on the destabilizing role of resonances in high-frequency WKB solutions. Specifically, we study higher-order resonances  associated with higher-order harmonics generated by nonlinearities. We give examples of systems and solutions for which such resonances generate instantaneous instabilities, even though the equations linearized around the leading WKB terms are initially stable, meaning in particular that the key destabilizing terms are not present in the data.

\end{abstract}

{\bf Keywords}:

Higher-order resonances, WKB solutions, transparency conditions, instabilities.







\setcounter{tocdepth}{2}
 \tableofcontents

\section{Introduction}

We study highly-oscillating solutions to hyperbolic systems based on Maxwell's equations. Considerable progress has recently been made in this line of research, especially following the works of Joly, M\'etivier and Rauch in the nineties (see for instance \cite{DJMR, JMR1, JMR2}, and \cite{Dumas1} for an overview and further references). The underlying physical problems deal with light-matter interactions.

 The specific systems under study here have the form
\be \label{1} \d_t U + \frac{1}{\e} A_0 U + \sum_{1 \leq j \leq d} A_j \d_{x_j} U = \frac{1}{\sqrt \e} B(U,U),\ee
and the data have the form
\be \label{2}
 U(0,x) = \Re e \, \big( a(x) e^{i k \cdot x/\e} \big) + \sqrt\e \varphi^\e(x).
 \ee
The small parameter $\e > 0$ is the wavelength of light. The initial wavenumber $k$ is a given vector in $\R^d.$ The constant matrix $A_0$ is non-zero and skew-symmetric. Its presence in the equations implies that the dispersion relation is non-homogeneous. This is a typical feature of systems describing light-matter interactions. The matrices $A_j$ are constant and symmetric. Explicit examples of such operators are given in \eqref{KG} below. We denote for any $\xi\in\C^d$:
$$
A(\xi):=\sum_{1 \leq j \leq d} A_j \xi_j.
$$

The data \eqref{2} are oscillating, with frequencies $O(1/\e)$ that are naturally of the same order of magnitude as the characteristic frequencies of the hyperbolic operator in \eqref{1}; this means that the light may propagate in the medium. The amplitude of the initial oscillations is $O(1)$ with respect to $\e.$

In \eqref{1}, the map $B$ is bilinear $\R^n \times \R^n \to \R^n.$ Its specific form derives from a phenomenological description of nonlinear interactions \cite{NM}, which may also include higher-order (order three, four, ...) interactions.

A key point here is the large prefactor $1/\sqrt \e$ in front of the nonlinearity. Since we will consider solutions of amplitude $O(1)$ with respect to $\e,$ this means that nonlinear effects play a role in the propagation of the initial oscillations in short time $O(\sqrt \e).$ In other words, the propagation is {\it not} weakly nonlinear in time $O(1).$

It has indeed been observed that the weakly nonlinear regime fails to describe nonlinear effects in time $O(1)$ for a number of physical systems: Maxwell-Bloch (by Joly, M\'etivier, and Rauch \cite{JMR2}), Maxwell-Euler (by Texier in \cite{em3}), and Maxwell-Landau-Lifshitz (by the author in \cite{MLL}).

For systems and data of the form \eqref{1}-\eqref{2}, a systematic study of resonances and stability of WKB solutions was given by Texier and the author in \cite{em4}. In particular, the article \cite{em4} contains a detailed account of how resonances may destabilize precise WKB solutions.

By WKB solutions we mean truncated power series in $\e$ which approximately solve \eqref{1}. Each term in the series is a trigonometric polynomial in $\th:=(k \cdot x - \o t)/\e,$ where $\o$ is an appropriate characteristic temporal frequency, in the sense that
\be \label{char} {\rm det}\,(-i\o+A(ik)+A_0)=0.\ee
 That is, a WKB solution is $U^a$ such that
 \be\label{suite}U^a = \sum_{n=0}^{2K_a} \e^{n/2} {\bf U}_{n}, \qquad {\bf U}_{n} = \sum_{p \in {\cal H}_n} e^{i p \th} U_{n,p}, \qquad K_a \in \Z_+, \,\, {\cal H}_n \subset \Z,\ee
 where {\it amplitudes} $U_{n,p}(t,x)$  are {\it not} highly-oscillating, and
 \be \label{wkb-new} \left\{ \begin{aligned} &\d_t U^a + \frac{1}{\e} A_0 U^a + \sum_{1 \leq j \leq d} A_j \d_{x_j} U^a = \frac{1}{\sqrt \e} B(U^a,U^a) + \e^{K_a} R^\e,\\
 & U^a(0,x)=U(0,x)+\e^K \psi^\e(x),\end{aligned}\right.
 \ee
 where $|R^\e|_{L^\infty}+|\psi^\e|_{L^\infty}$ is bounded uniformly in $\e.$ Parameters $K_a$ and $K$ describe the level of precision of the WKB solution $U^a.$

If $\o$ is a characteristic temporal frequency, satisfying \eqref{char}, then symmetries in the equations typically imply that some $p \o,$ with $p \in \Z,$ $p \neq 1,$ are characteristic as well, meaning
$${\rm det}\,(-ip\o+A(ip k)+A_0)=0.$$
We denote ${\cal H}_0:=\{p \in \Z, \,\, {\rm det}\,(-ip\o+A(ipk)+A_0)=0 \}$. Since $A_0 \neq 0,$ this set is typically finite. In \eqref{suite}, the leading term ${\bf U}_0$ appears as a sum over ${\mathcal H}_0.$ The coefficients $U_{0,p}$ are said to be {\it harmonics} of the leading coefficient ${\bf U}_{0}$ in the WKB solution.  We suppose ${\rm ker}\,(-ip\o+A(ipk)+A_0)$ is of dimension one with a generator $e_p$. Then $U_{0,p}$ satisfies the  so-called polarization condition for any $p\in {\cal H}_0$:
\be\label{intr-pola}
U_{0,p}(t,x) =g(t,x)\,e_p,\quad \mbox{for some scalar function $g(t,x)$}.
\ee

Resonances are frequencies that satisfy some functional relations involving the eigenvalues of the hyperbolic operator in \eqref{1} and the initial wavenumber $k,$ of the form
 \be \label{res} \l_j(\xi + p k) = \l_{j'}(\xi) + p \o, \quad \mbox{for some $\xi \in \R^d$},\ee
 where $\l_j$ and $\l_{j'}$ solve the dispersion relation.

The main result of \cite{em4} gives structural relations, involving the hyperbolic operator and the nonlinearity $B,$ which imply that arbitrarily small initial perturbations may be instantaneously amplified. Small perturbations means $K$ large in \eqref{wkb-new}. That is, no matter how large $K$ and $K_a$ are in \eqref{wkb-new}, the WKB solution $U^a$  may deviate instantaneously from the exact solution to \eqref{1}, under conditions put forward in \cite{em4}. In applications, WKB solutions are commonly used to simulate the interactions, since they satisfy model equations which are considerably cheaper to simulate than the original system \eqref{1} based on Maxwell. The result of \cite{em4} shows precisely how in some instances the WKB computations may fail to accurately describe the interactions, hence should {\it not} be used for simulations.

In \cite{em4}, it is assumed that there is no higher-order harmonics, such as $2 (\o,k),$ $3 (\o,k),$ etc., in the leading terms of WKB solutions. These higher-order harmonics are created by the nonlinearity. We relax this assumption here, and specifically focus on the destabilizing role played by resonances associated with higher-order harmonics. These resonances are related to \eqref{res} with $p = 2, 3,...$

Instead of building a complete theory, as in \cite{em4}, here we focus on one example, explicitly given in \eqref{KG} below, comprising coupled Klein-Gordon operators. Such operators were shown to derive from Euler-Maxwell in \cite{em1, em2,em4}; they were also the focus of article \cite{Germain-KG}. The nonlinearity in \eqref{KG} is tailored for the instability phenomenon that we want to observe; there is no doubt that the phenomenon is not limited to this specific form, but could occur for a variety of operators and bilinear terms of the form \eqref{1}.

Our point is that, for physical systems based on Maxwell's equations, {\it resonances associated with higher-order harmonics may destabilize precise WKB solutions}. We give a precise description of the destabilization process. This is a purely nonlinear mechanism: the higher-order harmonics are generated by the semilinear source terms; in particular, they are
not present in the equation at $t = 0.$ In our example, the equations linearized around the initial oscillations are indeed stable.

There is an analogy with recent work of Lerner, Morimoto and Xu \cite{LMX}, and later Lerner, Nguyen and Texier \cite{LNT}. These articles, \cite{LMX} and \cite{LNT}, study the phenomenon of loss of hyperbolicity, for which a model equation is $(\d_t + i t \d_x) u = 0:$ hyperbolicity holds at $t = 0,$ but strong instabilities occur for $t >0.$ Similarly, the initial linearized equations are stable here. Higher-order harmonics are $O(t),$ and generate instabilities. There is a form of degeneracy analogous to the one in $\d_t + i t  \d_x.$ As a result, the instability is slow to develop: it occurs in time $O(\e^{1/4}|\ln \e|^{1/2}),$ as opposed to time $O(\e^{1/2} |\ln \e|)$ in \cite{em4}.

One example of the interest of geometric optics for other research areas is the important similarity of concepts and tools with the construction of global solutions of small data for nonlinear dispersive equations. For instance, the transparency conditions are analogous to the null conditions introduced by Klainerman in \cite{kl}; the reduction to normal form allowed by the transparency property is analogous to the analysis of Shatah in \cite{shatah}. We refer to \cite{Lannes-Bourbaki} a survey for the connection between null conditions and transparency conditions.  Hence, there probably holds a strong link between the instability in nonlinear geometric optics and the possible non global existence results for nonlinear dispersive equations. However, it seems that more structures of the nonlinearities can be used in the study of nonlinear dispersive equations to obtain global existence of solutions with small initial data. For instance, Germain, Masmoudi and Shata in \cite{GMS1, GMS2, GMS3} introduced the \emph{space-time resonance} method to handle the situations where the normal form method cannot be used. It is not clear whether we can employ the arguments in \cite{em4} and in this paper to obtain some non global existence results for nonlinear dispersive equations, but it will be very interesting to look into it.

\subsection{Klein-Gordon systems}
For notational simplicity, we restrict to the following, one-dimensional system:
\begin{equation} \label{KG}
    \left\{ \begin{aligned}
    \d_t u + \left(\begin{array}{ccc} 0 & \d_x & 0 \\ \d_x & 0 & \e^{-1}\a_0\o_0 \\ 0 & - \e^{-1} \a_0 \o_0 & 0 \end{array}\right)  u & = \left(\begin{array}{c} 0 \\ \e^{-1/2} (u_3 + v_3) v_3 \\ 0 \end{array}\right), \\
 \d_t v + \left(\begin{array}{ccc} 0 & \theta_0 \d_x & 0 \\ \theta_0 \d_x & 0 & \e^{-1}  \o_0 \\ 0 & -\ \e^{-1}  \o_0 & 0 \end{array}\right)  v
 & = \left(\begin{array}{c} 0 \\ \e^{-1/2} (- u_2^2 + v_2^2) \\ 0 \end{array}\right),
     \end{aligned}
     \right.
    \end{equation}
   where $x \in \R,$ $U:=(u,v)\in\R^{3}\times \R^3$, and $\e$ is a small parameter. The constant $\theta_0$ is assumed to satisfy $0 < \theta_0 < 1.$ This means that the Klein-Godon operators have different velocities (as in the
operators derived from Euler-Maxwell, which feature one velocity equal
to the speed of light, and one velocity equal to the ratio of the
electronic thermal velocity to the speed of light). We assume $\o_0 > 0.$
We also assume that the masses are distinct; this means $\a_0 \neq 1$. We
assume $5/2< \a_0 < 3$ for two technical reasons. The first is to make sure that higher-order harmonics exist in the leading term of WKB solution; precisely, that is to guarantee that equation \eqref{o-k} has solution $(\o,k)$. The second reason introducing this condition on $\a_0$ is to reduce the number of resonances as in Section 5.3 of \cite{em4}. This allows us to focus on the resonances associated with higher-order harmonics.

We consider  highly  oscillating initial datum of the form:
\begin{equation} \label{id-KG}
  u(0,x) = \sqrt\e u_r(\e,x), \quad v(0,x) = v^0(x) e^{i k x/\e} + \left( v^0(x) e^{i k x/\e}\right)^*+\sqrt\e
  v_r(\e,x),
   \end{equation}
where for any fixed $\e>0$, we suppose sufficient Sobolev regularity for
$v^0,~u_r$ and $v_r$. The initial spatial wave number $k\in \R$ will be chosen such that
\eqref{und-phase} is satisfied. This initial datum will be chosen in \eqref{id-KG-pert} and in
Section \ref{sec:lower} in such a way that the slow instability
develops. The notation $a^*$ denotes the complex conjugate of $a$.

The system \eqref{KG} is symmetric hyperbolic, with bilinear source
term. For any fixed $\e>0$, the
local existence, uniqueness and regularity in smooth Sobolev spaces $H^s$ with $s>d/2$
are classical; however, the large nonlinear source term of order $O(1/\sqrt\e)$ causes the classical existence time to be  $O(\sqrt\e)$ (see for instance Chapter 7 of \cite{M}). We  study the system in high frequency
limit $\e\to 0$.

\subsection{Structure of the paper}

This paper is organized as follows. We mention previous results in \cite{JMR2} and \cite{em4} in Section \ref{sec:background}; we
compare these with our results in Section \ref{sec:compare}. In Section \ref{sec:results}, we give some
notations and state our main results. Section 3 and 4 comprise the
proofs. In Section 3, we construct a WKB solution, and in Section 4 we
show its instability. In Appendix A we recall some concepts about pseudodifferential operators.  The most technical parts of the instability proof are given in Appendix B and Appendix C.


\subsection{Background}\label{sec:background}

In this section,  we describe briefly the  main results in \cite{JMR2} and \cite{em4}.

\subsubsection{WKB solution, weak and strong transparency, stability}

We rewrite the system \eqref{1} as
\begin{equation}\label{mb}
\d_t U + \frac{1}{\e} A_0 U + \sum_{1 \leq j \leq d} A_j \d_{x_j} U -\frac{1}{\sqrt\e}B(U,U)=0.
\end{equation}
 We assume that the spectral decomposition
$$
A(\xi)+A_0/i=\sum_{j=1}^J \l_j(\xi)\Pi_j(\xi)
$$
is smooth, meaning that  the eigenvalues $\l_j$ and  the
eigenprojectors $\Pi_j$ are smooth. The notations $\l_j$ and $\Pi_j$ are used temporarily in Introduction.

We study the stability of WKB approximate solutions of form \eqref{suite}, in particular with amplitude $O(1)$.  We plug \eqref{suite} into \eqref{mb}, then the left-hand side
 has the form $ \sum_{n\geq -2}\e^{n/2} \Phi_n(t,x,\th) $ with
$$
\Phi_{n}:=\big(\d_t+A(\d_x)\big){\bf U}_n+\big(-\o\d_\th
+A(k\d_\th)+A_0\big)
 {\bf U}_{n+2}-\sum_{n_1+n_2=n+1}B({\bf U}_{n_1},{\bf U}_{n_2}).
$$
Solving \eqref{mb} amounts to solve $\Phi_n=0$ for all $n\in\Z,~n\geq -2$.
 This is generally not possible because
there are infinity of $n$. At least, we can solve \eqref{mb} approximately by solving $\Phi_n=0$ up to some nonnegative order:
if we solve $\Phi_n=0$ up
to some order $N\geq 0$, the WKB solution $U^a$ solves \eqref{mb}
with a remainder of order $O(\e^{(N+1)/2})$ which goes to zero.  This is the typical way to construct a WKB
solution (see Section \ref{sec:wkb} for more details).
\begin{defi} We say $U^a$ in \eqref{suite} to be a
WKB solution to \eqref{mb} of order $N$ provided
$\Phi_0=\Phi_1=\cdots=\Phi_{N}=0.$
\end{defi}
Remark that the leading term ${\bf U}_0=\sum_{p\in{\mathcal H}_0}e^{ip\th}U_{0,p}$ plays a special role in the WKB expansion.
Indeed, a WKB solution $U^a$ is approximated by its leading term
$$
|U^a-{\bf U}_0(t,x,(kx-\o t)/\e)|=O(\sqrt\e)\to 0.
$$
Considering the initial datum \eqref{2}, the initial values of $U_{0,p}$ are chosen as
$$
U_{0,1}(0,x)=a(x),\quad U_{0,-1}(0,x)=\big(a(x)\big)^*, \quad
U_{0,p}(0,x)=0,~\mbox{for}~p\not\in \{-1,1\}.
$$
In the leading term,  $U_{0,1}$ and $U_{0,-1}$ are said to be \emph{fundamental harmonics (or phases)}, while $U_{0,p}$ with $|p|>1$ are said to be \emph{higher-order harmonics (or phases)}.

We would like to show that the
high-order harmonics can destabilize the WKB solution, in spite
of their initial values being null. To achieve this, we need the higher-order harmonics to be non-null when $t>0$. The
higher-order harmonics are generated by the nonlinearity
$B$ and the fundamental harmonics. In solving $\Phi_n=0$, there arises
an equation of the form
$$
\big(\d_t+\Pi(3\tilde\b)A(\d_x)\Pi(3\tilde\b)\big)
U_{0,3}=B(U_{0,1},U_{1,2})+\cdots,
$$
where
$
\tilde\b:=(\o,k),$ $\Pi(\t,\xi)$  denotes the orthogonal projector onto $\ker
\big(-i\t +A(i\xi)+A_0\big)$ and $U_{1,2}$ solves
$$
\big(-2\o+A(2k)+A_0\big)U_{1,2}=B(U_{0,1},U_{0,1}).
$$
In our context, $\big(-2\o+A(2k)+A_0\big)$ is an invertible matrix, then $U_{1,2}$ can be written as a bilinear form of $U_{0,1}$. This
gives
\be\label{nonzero-u03}
\big(\d_t+\Pi(3\tilde\b)A(\d_x)\Pi(3\tilde\b)\big)
U_{0,3}=Q(U_{0,1})+\cdots,
\ee
where $Q$ is cubic in $U_{0,1}$. Then a non-null solution
$U_{0,3}(t,\cdot)$ is expected for $t>0$.

In \cite{JMR2}, Joly, M\'etivier and Rauch  introduced the
\emph{weak transparency} condition:\\
{\bf Weak transparency} For any $p,p_1\in \Z$ and any $U,~V\in \C^n$, one has
\begin{equation}\label{wtrans0}
\big|\Pi(p_1\tilde\b)B\big(\Pi((p_1-p)\tilde\b)U,\Pi(p\tilde\b)V\big)\big|=0.
\end{equation}
Under this weak transparency assumption, we can show the existence of WKB solution of any order in time
$O(1)$ (see Proposition 6.19 in \cite{em4}).

Concerning the stability of the WKB solution, the following \emph{strong
transparency} condition was introduced in \cite{JMR2}:\\
{\bf Strong transparency} There exists a constant $C$ such that for any $p\in \Z$,  $1\leq j,j'\leq J $, $\xi\in \R^d$ and  $U,~V\in \C^n$, one has
\begin{equation}\label{sttrans0}
\big|\Pi_j(\xi+p k)B\big(\Pi(p\tilde\b)U,\Pi_{j'} (\xi)V\big)\big|\leq C|\l_j(\xi+p k)-\l_{j'}(\xi)-p\o|\cdot|U|\cdot|V|.
\end{equation}
Remark  that the strong transparency condition  is  strictly  stronger than
the weak transparency condition.

Typically, strong transparency implies stability, via normal form
reductions (see \cite{JMR2,MLL,em4}). For \eqref{KG}-\eqref{id-KG} we show that the weak transparency is
satisfied and a WKB solution can be constructed.  However,  the strong
transparency is not satisfied. This implies instabilities of the WKB solution.

\subsubsection{Absence of strong transparency and instability}
In \cite{em4}, Texier and the author consider systems of the form
\eqref{1} for which the weak transparency condition  is satisfied while the strong transparency
condition  is not. This indicates that,
approximate solutions can be constructed through WKB expansion, but
the normal form reduction method cannot be applied to show the stability of
such WKB solutions.

The absence of strong transparency means that there
exists $(j,j',p)$ such that \eqref{sttrans0} is not satisfied.
Denote $J_0$  the set containing all such index $(j,j',p)$ and $R_{jj',p}$ the $(j,j',p)$-resonant set defined as
$$R_{jj',p}:=\{\xi\in\R^d,\,\l_j(\xi+p k)=p \o+\l_{j'}(\xi)\}.$$
If $R_{jj',p}$ is empty, by the regularity of $\l_j$ and $\Pi_j$, $j=1,\cdots, J$,
condition \eqref{sttrans0} is satisfied for the index
$(j,j',p)$. Then for any $(j,j',p)\in J_0$, $R_{jj',p}$ is not
empty, and the following quantity is well defined:
\be\label{def:Gamma} \Gamma:=\sup_{(j,j',p)\in J_0}|g_p(0,x_{p})|^2\sup_{\xi\in
R_{jj',p}} \tr\Big(\Pi_j(\xi+p k)B(\vec
e_p)\Pi_{j'}(\xi)B(\vec e_{-p})\Pi_j(\xi+p k)\Big), \ee where $g_p$ come from the polarization condition \eqref{intr-pola}
and $x_p$ is the point where  $|g_p(0,\cdot)|$ admits its maximum.

In \cite{em4}, it is shown that the stability  of  the WKB solution
is determined by the sign of $\Gamma$:\\
If  $\Gamma<0$, the perturbation  system is symmetrizable and the WKB solution is stable.\\
If $\Gamma>0$, it is shown that the WKB solution is unstable. The instability analysis consists first in reducing, via
normal form reductions, the problem to the study of {\it interaction
systems} of the form
\be\label{intr-trun-eq}
\d_t u+\frac{1}{\sqrt\e}\op_\e (M_0)u=f,
\ee
where $f$ is small of order $O(\e^{\kappa})$ for some $\kappa>0$ and contains in particular nonlinear terms, and $\op_\e(M_0)$ is the semiclassical pseudo-differential operator associated with a matrix-valued symbol $M_0$  which is of order zero, essentially independent of $t$ and has the form
$$
M_0 := \bp i\l_j-ip\o &-\sqrt\e b_{jj'} \\-\sqrt\e
b_{j'j}&i\l_{j'}\ep$$ with $$ b_{jj'}=\Pi_{j}(\xi+p
k)B(U_{0,p}(0,x))\Pi_{j'}(\xi),\quad
b_{j'j}=\Pi_{j'}(\xi)B(U_{0,-p}(0,x))\Pi_{j}(\xi+p k)
$$
the \emph{interaction coefficients} associated with resonance $\l_j(\xi+p k)=p\o+\l_{j'}(\xi)$.

The analysis of the interaction systems relies on a Duhamel
representation formula introduced by Texier in \cite{dua}. The analysis of
\cite{dua} shows that a solution operator for the interaction systems can be
constructed as a pseudo-differential operator with leading symbol the
symbolic flow of \eqref{intr-trun-eq}, defined by
 \be\label{intr:flow1} \d_t  S_0(\t;t)+\frac{1}{\sqrt\e}
M_0 S_0(\t;t) =0,\qquad S_0(\t;\t)=\Id. \ee

 In fact, the index in \eqref{def:Gamma} is the maximal real part of ${\rm sp}\, (M/\sqrt\e)$.
 The ordinary differential equation \eqref{intr:flow1} is autonomous and has solution
\be\label{intr:def-s0}
S_0(\t;t)=\exp\big(-M_0(t-\t)/\sqrt\e\big).\ee
From this explicit expression, a good upper bound for $S_0$ can be obtained. By {\it good} upper bound
we mean an upper rate of growth that is arbitrarily close to a lower
rate of growth. Indeed, $\Gamma > 0$ implies that some eigenvalue of $M_0$
has positive real part. Then $S_0$ is exponentially growing in time. Via
the Duhamel Lemma of Appendix C, an instability result for the WKB
solution ensues.

\subsection{Higher-order resonances and instability}\label{sec:compare}
In this section, we compare our results to the results of \cite{JMR2}  and \cite{em4}. In particular, we point out the main new difficulties compared to \cite{em4}.

\subsubsection{Transparency and loss of hyperbolicity}

In \cite{JMR2} and \cite{em4}, there exist interaction coefficients which
are non-transparent (meaning that the strong transparency condition is not satisfied) both initially and for positive time. This implies a loss of hyperbolicity around a resonance, both initially and for positive time. A simple example model is the non-degenerate Cauchy-Riemann equation
$$
\d_t u+\frac{i \d_x}{\sqrt\e} u=0,\quad u(0)=u_0,
$$
of which the solution is
$$\displaystyle{\hat u(t,\xi)=\exp\Big(\frac{t\xi}{\sqrt\e}\Big)\hat
u_0(\xi)}.$$
Then the instability is recorded in time $O(\sqrt\e
|\ln \e|)$ for frequencies $O(1)$.

 The present situation ia analogous to the  degenerate Cauchy-Riemann equation
$$
\d_t u+\frac{it \d_x}{\sqrt\e} u=0,\quad u(0)=u_0.
$$
When $t=0$, the equation is hyperbolic. When $t>0$, the hyperbolicity is lost.  The solution is
$$\displaystyle{\hat u(t,\xi)=\exp\Big(\frac{t^2\xi}{2\sqrt\e}\Big)\hat u_0(\xi)}.$$
The instability develops in time $O(\e^{1/4}|\ln\e|^{1/2})$ for frequencies $O(1)$.

\subsubsection{Stability index}

For system \eqref{KG}-\eqref{id-KG}, we will show that for any non-transparent index $(j,j',p)\in J_0$, there holds
$U_{0,p}(0,\cdot)= 0$.  Then the stability index $\Gamma$ defined in \eqref{def:Gamma} is zero. This case is not covered by the analysis of \cite{em4}. Here instability relies on condition \be\label{def:gamma1}
\sup_{(j,j',p)\in J_0}\sup_{\xi\in R_{jj',p}} |\d_t
g_p(0,y_{p})|^2\tr\Big(\Pi_j(\xi+p k)B(\vec
e_p)\Pi_{j'}(\xi)B(\vec e_{-p})\Pi_j(\xi+p k)\Big)>0.\ee
Recall that $g_p$ is the function introduced in the polarization condition \eqref{intr-pola}. The stability index $\Gamma_1$ is defined as the square root of the left-hand side of \eqref{def:gamma1} where $y_p$ is the point at which $|\d_t
g_p(0,\cdot)|$ admits its maximum. In our analysis, we have $p\in \{-3,3\}$ corresponding to  higher-order harmonics. Parameter $\Gamma_1$ can be explicitly calculated, in terms of the system and the datum.
 Indeed, in the WKB expansion, we find that $g_{3}$ satisfies a transport equation with a cubic source term in $g_{1}$ and $g_{-1}$ (see \eqref{nonzero-u03}). Then we can calculate $\d_t g_3(0,\cdot)$ through the equation and initial data $g_{\pm}(0,\cdot)$ which can be obtained from \eqref{id-KG}.

\subsubsection{Bounds for the symbolic flow}\label{sec:intr-symflow}

The analysis here relies partly  on the Duhamel representation formula introduced in \cite{dua}. Contrary to \cite{em4},
here we need to consider the upper bound for a symbolic flow which
is solution to a non-autonomous system of the following form (see
\eqref{eq:tildeS0}): \be\label{intr:sys-flow-2} \d_t \widetilde
S_0(\t;t)+\frac{1}{\e^{3/4}} M_0(t) \widetilde S_0(\t;t) =0,\qquad
\widetilde S_0(\t;\t)=\Id. \ee

We remark that in Section \ref{sec:intr-symflow},  the time $t$ is rescaled by $\e^{1/4}$ so that the instability is now expected in time $O(|\ln\e|^{1/2})$.

The  matrix $M_0(t)$ is of the form
(the index $(j,j',p)$ is chosen accordingly):
$$
M_0(t):=\bp i\l_j-ip\o &-\e^{3/4}t \,\tilde b_{jj'}
\\-\e^{3/4}t \,\tilde b_{j'j}&i\l_{j'}\ep,
$$
 where
$$ \tilde b_{jj'}=\Pi_{j}(\xi+p k)B(\d_tU_{0,p}(0,x))\Pi_{j'}(\xi),\quad
\tilde b_{j'j}=\Pi_{j'}(\xi)B(\d_tU_{0,-p}(0,x))\Pi_{j}(\xi+p k).
$$
The two
blocks $t \,\tilde b_{jj'}$ and $t\,\tilde b_{j'j}$ come from the
interaction coefficients
$$ \Pi_{j}(\xi+p k)B(U_{0,p}(\e^{1/4}t,x))\Pi_{j'}(\xi),\quad
\Pi_{j'}(\xi)B(U_{0,-p}(\e^{1/4}t,x))\Pi_{j}(\xi+p k)
$$
and the Taylor formula with respect to $t$:
$$U_{0,p}(\e^{1/4}t)=U_{0,p}(0)+\e^{1/4}t\d_t U_{0,p}(0)+O(\e^{1/2}t^2),$$
where $U_{0,p}(0,\cdot)= 0$ and the remainder $O(\e^{1/2}t^2)$ contributes a term of order $O(\e^{1/4})$ and is neglected (see \eqref{eq:widetildeS}-\eqref{eq:tildeS0}).

The goal is to obtain a good upper bound for $\widetilde S_0$ of the form \be\label{intro-upper}
|\widetilde S_0(\t;t)|\leq C \exp\big((t^2-\t^2)\g^+/2\big). \ee  Here
$\g^+$ is such that $\g^+ t$ is the maximum of the real parts of the
eigenvalues of $-M_0(t)/\e^{3/4}$. By direct calculation, the
eigenvalues of $M_0(t)$ are (see also \eqref{eig-m}):
\be\label{intro-ev}
\begin{aligned} &i\l_{j}-ip\o,\quad i\l_{j'},\\
&\nu_{\pm}:=\frac{i}{2} \big(\l_{j}-p\o + \l_{j'} \big) \pm
\frac{1}{2}
  \big( 4 \e^{3/2}t^2 \tr\,(\tilde b_{12} \tilde b_{21}) - (\l_{j}-p\o - \l_{j'})^2
  \big)^{1/2}.
 \end{aligned}
\ee This implies that, at resonances $\l_{j}-p\o=\l_{j'}$, the real
parts of the eigenvalues of $-M_0(t)/\e^{3/4}$ admit their maximum
$t\, \sqrt{\tr\,(\tilde b_{12} \tilde b_{21})}$ provided $\tr\,(\tilde
b_{12} \tilde b_{21})>0$. The positivity of $\tr\,(\tilde b_{12}
\tilde b_{21})$  is guaranteed by the positivity of  $\Gamma_1$. Precisely, $\g^+$ is the maximum of $\sqrt{\tr\,(\tilde b_{12}
\tilde b_{21})}$ over a sufficient small neighbourhood of the resonant sets due to the localization (see Section \ref{s-f loc}).

We say upper bound \eqref{intro-upper} is good because it has almost the same growth
rate as the lower bound that we can obtain, which is $\exp\big((t^2-\t^2)\g^-/2\big)$ with $\g^-$  sufficiently close to $\g^+$.

 In addition to the difficulties already present in \cite{em4}: fast
oscillations $O(\e^{-3/4})$ and small distance $O(\e^{3/4})$ between
resonances and crossing points of the eigen-modes of $M_0(t)$, the issue here is that the equation
\eqref{intr:sys-flow-2} is non-autonomous, implying that we do not have
an explicit formula like \eqref{intr:def-s0} for the
solution $\widetilde S_0$.  In particular, the argument in \cite{em4}  to show the uniform bound for $S_0$ cannot be
applied.

To show upper bound \eqref{intro-upper}, the idea is to
diagonalize $M_0(t)$, wherever possible:
$$
M_0(t)=\sum_{j} \gamma_j(t) \Theta_j(t), \quad \mbox{$\g_j$ are
eigenvalues, $\Theta_j $ are  eigenprojectors}.
$$
Remark that notations $\g_j$ and $\Theta_j $ are used temporarily in Introduction.
Applying $\Theta_j (t)$ onto \eqref{intr:sys-flow-2} gives
$$
\d_t \big(\Theta_j (t)\widetilde S_0\big)+\frac{1}{\e^{3/4}}\g_j(t)
\big(\Theta_j (t)\widetilde S_0\big) =\big(\d_t\Theta_j (t)\big)\widetilde
S_0,\quad \big(\Theta_j \widetilde S_0\big)(\t;\t)=\Theta_j (\t).
$$
Then we can write an explicit formula:
$$
\begin{aligned}
\Theta_j (t)\widetilde
S_0(\t;t)=&\exp\left(-\e^{-3/4}\int_\t^t\g_j(t')dt'\right)\Theta_j (\t)\\&+\int_\t^t
\exp\left(-\e^{-3/4}\int_{t'}^t\g_j(s)ds\right)
\big(\d_t\Theta_j (t')\big)\widetilde S_0(\t;t')dt'.
\end{aligned}
$$
It is shown that  the maximum of the real parts of
$\e^{-3/4}\g_j$ for all $j$ is $\g^+ t$.  Then
$\e^{-3/4}\big(\g_j(t)+\g_j(t)^*\big)/2\leq \g^+ t$ and
\be\label{intro-slow-ps}
\begin{aligned}
&|\Theta_j (t)\widetilde S_0(\t;t)|\leq
\exp\big((t^2-\t^2)\g^+/2\big)|\Theta_j (\t)| \\&~~~~~~~~+\int_\t^t
\exp\big((t^2-t'^2)\g^+/2\big) |\big(\d_t\Theta_j (t')\big)\widetilde
S_0(\t;t')|dt'. \end{aligned}\ee In the context of
\eqref{KG}-\eqref{id-KG}, we find $\d_t \Theta_j (t)\leq C(1+1/t) $. Then
for $t$ large, $\d_t \Theta_j (t) $ is bounded. The problem is that for $t$ near $0$, $\d_t \Theta_j (t)$ is unbounded and is of order $1/t$, which implies that the integral on the right-hand side of \eqref{intro-slow-ps} is not well defined when taking $\t=0$.  As in the proof of Lemma \ref{lem:ar-res}, we overcome this difficulty by introducing the
following change of variable for some small $c_0$:
$$
\widetilde S_1(\t;t):=\widetilde S_0(\t;t+c_0).
$$
Then for $\widetilde S_1$, we can obtain a similar inequality as
\eqref{intro-slow-ps}, in which the term $\d_t \Theta_j (t')$ is replaced by $\d_t \Theta_j (t'+c_0)$ which becomes
uniformly bounded with bound $C/c_0$. For small time in $[0,c_0]$, it is enough to use a rough estimate (Lemma \ref{rough-est}) for $\widetilde S_0$. As we remarked right after \eqref{intr:sys-flow-2}, the instability is expected in time $O(|\ln\e|^{1/2})$, so $[0,c_0]$ is indeed a small time interval.

After considering $\Theta_j  \widetilde S_0$ for all $j$, by the identity $\sum_{j}\Theta_j =\Id$ we finally obtain
$$
|\widetilde S_0(\t;t)|\leq C \exp\big((t^2-\t^2)\g^+/2\big)
\exp(Ct),\quad \mbox{for some constant $C>0$}.
$$
We remark that, for the time we consider of order $O(|\ln\e|^{1/2})$,
the term $\exp(Ct)$ is negligible compared to the main growth term
$\exp\big(t^2\g^+/2\big)$.

 For the case where $M_0(t)$ is not
diagonalizable, we show  there exists an invertible matrix $P$
which is independent of $t$, and  $|P|+|P^{-1}| \leq C(c_0)$ such that $\big|P
M_0(t)P^{-1}+\big(PM_0(t)P^{-1}\big)^*\big|/2\leq c_0\,t\e^{3/4}$ with
$c_0$ small and $C(c_0)$ a constant bounded for $c_0$ away from zero (we can choose $c_0$ as small as we want; here we only need
to fix $c_0$ such that $c_0\leq \g^+$). Then for the new unknown
$\widetilde S_0^{(1)}:=P\widetilde S_0$ which satisfies
$$
\d_t \widetilde S_0^{(1)}+\frac{1}{\e^{3/4}} \big(P
M_0(t)P^{-1}\big) \widetilde S_0^{(1)} =0,\qquad \widetilde
S_0^{(1)}(\t;\t)=P,
$$
we have $$|\widetilde S_0^{(1)}|\leq |P|\exp\big(c_0
(t^2-\t^2)/2\big).$$
 This implies
$$|\widetilde S_0(\t;t)|\leq |P||P^{-1}|\exp\big(c_0(t^2-\t^2)/2\big)\leq C
\exp\big((t^2-\t^2)\g^+/2\big).$$
Rigorous arguments are given in Appendix B.

\section{Description of the results}\label{sec:results}

In this paper, we focus on one spatial dimension $d=1$. However, we still use $d$ on
some occasions, when it is useful to stress the dependence on the dimension. If there is no specific definition, $C$ denotes a
constant independent of $(x,\xi,t,\t,\e,c_0)$ where $c_0$ is a small constant introduced in Section B.2.2 and fixed in Section B.2.7. However the value of $C$ could change from line to line.

\subsection{Notations}

  We introduce the notations
   \be\label{notation-lm} L(\o_0, \d) := \d_t + \left(\begin{array}{ccc} 0 & \d_x & 0 \\ \d_x & 0 & \a_0\o_0 \\ 0 & - \a_0\o_0 & 0 \end{array}\right),
   \quad  M( \o_0, \d) := \d_t + \left(\begin{array}{ccc} 0 & \theta_0 \d_x & 0 \\ \theta_0 \d_x & 0 & \o_0 \\ 0 & -\o_0 & 0
   \end{array}\right),\ee
where $\d=(\d_t,\d_x)$. Then the system \eqref{KG} of coupled
Klein-Gordon systems can be written as
\begin{equation} \label{coupled-KG}
    \left\{ \begin{aligned}
     L(\frac{ \o_0}{\e},\d) u & = \frac{1}{\sqrt \e} F(u + v,v), \\
     M(\frac{ \o_0}{\e}, \d) v & = \frac{1}{\sqrt \e} (G(u,u) + H(v,v)),
     \end{aligned}
     \right.
    \end{equation}
  where $F, G, H: \R^3 \times \R^3 \to \R^3$ are symmetric bilinear forms defined for any $u=(u_1,u_2,u_3)$ and $v=(v_1,v_2,v_3)$ as
\be\label{def-F-G-H} F(u,v):=(0,u_3v_3,0),\quad
G(u,v):=(0,-u_2v_2,0),\quad H(u,v):=(0,u_2 v_2,0). \ee

The characteristic varieties  are the sets of
time-space frequencies that define plane-wave solutions of $L$ and $M$:
$$
   \begin{aligned} \mbox{Char}\, L&  := \{ (\t,\xi) \in \R \times \R, \mbox{det}\, L(\o_0, -i \t, i \xi) = 0\}, \\
    \mbox{Char}\, M & := \{ (\t,\xi) \in \R \times \R, \mbox{det}\, M(\o_0, -i\t, i \xi) = 0\}.
  \end{aligned}
$$
 They both admit global smooth parameterizations, by $\{0, \pm \l\}$ and $\{0, \pm \mu\}$ respectively, where
 \begin{equation} \label{kg-modes}
  \l(\xi) := \sqrt{\a_0^2\o_0^2 + |\xi|^2}, \qquad \mu (\xi):= \sqrt{\o_0^2 + \theta_0^2 |\xi|^2}.
 \end{equation}
For any $ (\t, \xi)\in\R\times\R,$ we denote by $P(\t,\xi)$ and  $Q(\t,\xi)$ the
projectors onto the kernel of $L( \o_0, -i\t,i\xi)$ and $M( \o_0,
-i\t, i\xi),$ respectively. Then we have the following smooth
spectral decompositions: \be\label{spe-dec}\begin{aligned}
&L(\o_0,0,i\xi):=i\l(\xi)P_+(\xi)-i\l(\xi)P_-(\xi)+0\cdot P_0(\xi),\\
&M(\o_0,0,i\xi):=i\mu(\xi)Q_+(\xi)-i\mu(\xi)Q_-(\xi)+0\cdot
Q_0(\xi),\end{aligned}\ee where the eigenvalues are given by
\eqref{kg-modes} and the eigenprojectors are
\be\begin{aligned}
&P_+(\xi):=P\big(\l(\xi),\xi\big),\quad P_-(\xi):=P\big(-\l(\xi),\xi\big),\quad P_0(\xi):=P\big(0,\xi\big), \\
&Q_+(\xi):=Q\big(\mu(\xi),\xi\big),\quad
Q_-(\xi):=Q\big(-\mu(\xi),\xi\big),\quad
Q_0(\xi):=Q\big(0,\xi\big).\end{aligned} \nn\ee

Given a characteristic phase $ (\t, \xi) \in \mbox{Char}\,L,$
given $p \in \Z,$ the phase $(p\t, p\xi)$ belongs to $\mbox{Char}\, L$ if
and only if $p \in \{-1, 0, 1\}.$ The same is true of $\mbox{Char}\,
M.$ Also, by choice of $\theta_0$ and $\a_0$, the intersection
$\mbox{Char}\, L \, \cap \, \mbox{Char}\, M$ is equal to $\{ (0,
\xi),~\xi\in\R\}.$

\subsection{Statement of the results}

For initial datum \eqref{id-KG}, we choose  $k\neq 0$ such that for
some $\o\neq 0$, there holds \be\label{und-phase} \tilde\b=(\o,k)\in
\mbox{Char}\, M, \qquad 3\tilde\b=(3\o,3k)\in \mbox{Char}\, L .\ee By
\eqref{kg-modes}, equation \eqref{und-phase} amounts to
\be\label{o-k}
k^2=\big(1-\frac{\a_0^2}{9}\big)(1-\th_0^2)^{-1}\o_0^2,\quad
\o^2=k^2+\frac{\a_0^2}{9}\o_0^2.\ee We choose initial amplitude
$v^0$ satisfying the polarization condition:
\be\label{intro-pola}v^0\in \ker
  M(\o_0,-i\o,ik).\ee We suppose the regularity $v^0\in H^s$ with
  $s$ sufficient large as in Remark \ref{choi-s}.

  \medskip

In Section \ref{sec:wkb}, we  show that the weak transparency
condition is satisfied, then we construct an approximate solution
by WKB expansion:
\begin{prop}\label{prop:wkb}
Under the choice of $(\o,k)$ as in \eqref{und-phase}-\eqref{o-k},
the polarization condition \eqref{intro-pola} and the regularity
assumption $v^0\in H^s$ with $s$ large,  for any $K_a$,  there
exists $(u^a,v^a)$ that solves
\begin{equation} \label{sys:app}
    \left\{ \begin{aligned}
     L(\frac{ \o_0}{\e},\d) u^a & = \frac{1}{\sqrt \e} F(u^a + v^a,v^a)+\e^{K_a}r_1^\e, \\
     M(\frac{ \o_0}{\e}, \d) v^a & = \frac{1}{\sqrt \e} (G(u^a,u^a) +
     H(v^a,v^a))+\e^{K_a}r_2^\e,\\
 u^a(0,x) &= \sqrt\e
 u^\e_r(0,x),\\  v^a(0,x) &= \Re e\left(v^0(x) e^{i k x/\e}\right) + \sqrt\e
 v^\e_r(0,x),
     \end{aligned}
     \right.
    \end{equation}
in some time interval $[0,\tilde T]$ with
$\tilde T>0$ independent of $\e$, and for $j=1,2$,
    $$r_j^\e(t,x)=R_j(t,x,\frac{k
x-\o t}{\e}),\quad R_j(t,x,\th)\in L^\infty
\big([0,\tilde T]_t,H^1(\TT_{\th},H^{s-K_a}(\R_x))\big).$$
 Moreover,
$(u^a,v^a)$ has the following expansion:
\be\begin{aligned}&u^a=\Re e\left( u_{03}(t,x) e^{3i (k x-\o
t)/\e}\right) +\sqrt\e u_r^\e,\quad  v^a=\Re e\left(v_{01}(t,x) e^{i (k x-\o   t)/\e}\right) +\sqrt\e v_r^\e,
 \end{aligned}\nn\ee
where the leading amplitudes have  initial data $$u_{03}(0,x)=0,\quad
v_{01}(0,x)=v^0(x).$$  The correctors are of the form
$$\big(u^\e_r(t,x),v^\e_r(t,x)\big)=\left(U^a_r\big(t,x,\frac{k
x-\o t}{\e}\big),V^a_r\big(t,x,\frac{k x-\o t}{\e}\big)\right) $$ with
$$(U^a_r,V^a_r)(t,x,\th)\in L^\infty
\big([0,\tilde T]_t,H^1(\TT_{\th},H^{s-K_a}(\R_x))\big).$$
\end{prop}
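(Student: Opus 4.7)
\medskip

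The plan is to carry out the standard WKB cascade for the system \eqref{coupled-KG}, order by order in $\sqrt\e$, using the specific harmonic structure forced by \eqref{und-phase}.

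I begin with the ansatz
\[
u^a = \sum_{n=0}^{2K_a} \e^{n/2}\,\mathbf{u}_n(t,x,\theta),\qquad v^a = \sum_{n=0}^{2K_a} \e^{n/2}\,\mathbf{v}_n(t,x,\theta),
\]
where $\theta = (kx-\o t)/\e$ and each $\mathbf{u}_n,\mathbf{v}_n$ is a trigonometric polynomial $\sum_{p\in\mathcal{H}_n} e^{ip\theta}(\cdot)_{n,p}(t,x)$. Plugging into \eqref{coupled-KG} and expanding the operators $L(\o_0/\e,\d)$ and $M(\o_0/\e,\d)$ in the $\theta$ variable produces, after grouping powers of $\e^{1/2}$, a sequence of equations of the form
\[
L(\o_0,-ip\o,ipk)u_{n,p} = \Phi^{L}_{n-2,p},\qquad M(\o_0,-ip\o,ipk)v_{n,p} = \Phi^{M}_{n-2,p} + (\text{transport of lower-order term}),
\]
where the right-hand sides depend only on amplitudes of strictly lower index and involve the bilinear forms $F,G,H$ evaluated on known arguments.

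Next I solve the cascade. At order $\e^{-1}$, for $p\in\{-1,0,1\}$ the operator $M(\o_0,-ip\o,ipk)$ is singular and forces the polarization $v_{0,\pm1}\in\ker M(\o_0,\mp i\o,\pm ik)$, which is exactly \eqref{intro-pola}; for $u$ the relevant singular $p$ is $\pm 3$ since $3\tilde\b\in\text{Char}\,L$, giving $u_{0,\pm 3}\in\ker L(\o_0,\mp 3i\o,\pm 3ik)$. At non-characteristic $(p\o,pk)$ the operators are invertible and the amplitudes are determined algebraically. At order $\e^{-1/2}$ and above, a Fredholm alternative applies: the characteristic components $v_{n,\pm1}$ and $u_{n,\pm 3}$ are determined by transport equations (of the form $\d_t + v_g\d_x + \text{source}$) obtained by projecting via $Q_\pm(\pm k)$ and $P_\pm(\pm 3k)$, while off-characteristic modes are obtained by inverting $L$ or $M$. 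In particular $u_{0,\pm 3}$ will satisfy a transport equation with a source obtained from $F(v_{0,1}, v_{1,2})$ where $v_{1,2}$ has been eliminated via $M(\o_0,-2i\o,2ik)^{-1}F(v_{0,1},v_{0,1})$-type substitutions, as announced in \eqref{nonzero-u03}; its initial datum is $0$.

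The key structural point to verify at this stage is weak transparency, guaranteeing that the cascade actually closes: whenever a forcing term $\Phi^L_{n,p}$ or $\Phi^M_{n,p}$ lies on a characteristic sheet $(p\o,pk)$, its projection onto the corresponding kernel must be absorbable in the transport equation (i.e.\ must not produce an algebraic obstruction). Here this follows from $\text{Char}\,L\cap\text{Char}\,M=\{(0,\xi)\}$ together with the explicit form \eqref{def-F-G-H}: $F,G,H$ act only on second components, and $Q_{\pm}(\xi)$, $P_{\pm}(\xi)$ intersect these components nontrivially only at harmonics where the arithmetic of $p\o$ versus $\l,\mu$ is compatible. A harmonic-by-harmonic count (using $5/2<\a_0<3$) shows that no forbidden resonance among $\{\pm 1,\pm 2,\pm 3,\ldots\}$ occurs besides the ones already built into the ansatz, so each Fredholm condition is automatic or yields a benign transport equation.

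Finally I estimate the remainders. The tails come from the highest-order terms, which produce an uncompensated contribution of size $\e^{(K_a-1)/2}/\sqrt\e=\e^{K_a/2 - 1}$; after reorganizing (or choosing $K_a$ and stopping index consistently) this gives $\e^{K_a}r_j^\e$ with $r_j^\e$ a trigonometric polynomial in $\theta$ whose coefficients are controlled in $H^{s-K_a}(\R_x)$ provided $v^0\in H^s$ with $s$ large enough, since each step in the cascade loses a bounded number of derivatives (one from transport, a few from algebraic inversions acting as multipliers). The existence time $\tilde T>0$ uniform in $\e$ is produced by the standard a priori Sobolev estimate for each semilinear transport equation in the cascade. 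I expect the main obstacle to be purely bookkeeping: identifying, for each $(n,p)$, which mode is characteristic for $L$ or $M$, confirming that weak transparency applies to absorb the source, and tracking the regularity loss uniformly in $K_a$; once this is laid out, each individual step reduces to a standard symmetric-hyperbolic energy estimate.
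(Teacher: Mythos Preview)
Your proposal is correct and follows essentially the same approach as the paper: write the WKB ansatz, derive the cascade, verify the weak transparency condition \eqref{wt1} from the explicit form of $F,G,H$ and the harmonic arithmetic forced by \eqref{und-phase}, solve the resulting closed transport equations for the characteristic amplitudes $u_{0,\pm3},v_{0,\pm1}$, and then appeal to the standard WKB machinery (as in \cite{JMR2} or \cite{em4}) to propagate to any order. The paper's own proof in Section~\ref{sec:wkb} carries out exactly these steps, with the final paragraph simply invoking Theorem~2.3 of \cite{JMR2} once \eqref{wt1} is checked; your bookkeeping remark about the remainder exponent is slightly garbled, but as you note this is purely cosmetic and is handled by the standard references.
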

\begin{rem}\label{choi-s}
Precisely, we choose the Sobolev regularity index $s>d/2+K_a+(d+2)+(q_0+3)/4$ according
to the $H^{s-1}$ estimate \eqref{est-g} of $\d_t g$, the need for
the smallness of  the right-hand side of \eqref{cho-s}, and the estimate for $\d_x^\a S_q, ~|\a|\leq d+1,~q\leq q_0$ where $S_q$ defined as in \eqref{sq} and $q_0$ is the order for the expansion in constructing solution operator in Appendix C (see \eqref{def:zeta}).
\end{rem}
In Section \ref{sec:slow-ins},  we show the WKB solution $(u^a,v^a)$ obtained in Proposition \ref{prop:wkb} is  unstable. We consider  initial data of the form
  \begin{equation} \label{id-KG-pert}
  u(0) = \sqrt\e u^\e_r(0)+\e^K \phi_1^\e, \quad v(0) = \Re e\left(v^0(x) e^{i k  x/\e} \right)+\sqrt\e
 v^\e_r(0)+ \e^K \phi_2^\e
   \end{equation}
corresponding to small perturbations of the WKB solution of Proposition \ref{prop:wkb}.
\begin{theo} \label{theorem1}
There exists $\e_0>0$ such that for any $0<\e<\e_0$, for some
initial perturbations satisfying $\ds{\sup_{0<\e<\e_0}
|(\phi_1^\e,\phi_2^\e)|_{L^1\cap L^\infty}<\infty},$  the solution $(u,v)$
to \eqref{KG} issued form the initial datum \eqref{id-KG-pert} is
unstable, in the following two senses:
\begin{itemize}

 \item  for any $K_a +1/4 > K > d/2+1/4,$ there exists a unique solution $u \in C^0([0,T_0 \e^{1/4} |\ln \e|^{1/2}],H^{s_0})$
  for some $s_0>d/2$ and all $\e$-independent $T_0<T_0^*$ where
  \be\label{def:Tstar}T_0^*:=\sqrt{2(K-d/2-1/4)/\Gamma_1}\ee with $\Gamma_1$  precisely given in
\eqref{def:gamma1-0}. Moreover,  for any $\kappa_0 > d/2+1/4,$
  there holds for $T_0$ close to $T_0^*$:
   \be\label{est:th3}\sup_{ \begin{smallmatrix} 0 < \e < \e_0 \\ 0 \leq t \leq T_0  \e^{1/4} |\ln \e|^{1/2} \end{smallmatrix}} \e^{-\kappa_0} | (u - u^a,v-v^a)(t)|_{L^2} =\infty;\ee

 \item for any $K_a + 1/4 > K > 1/4$ and solution $u\in L^\infty([0, T_1  \e^{1/4} |\ln \e|^{1/2}] \times \R)$
for any $\e$-independent $T_1<T_1^*$ where \be\label{time-ins}T_1^*:=\sqrt{2(K-1/4)/\Gamma_1},\ee   for any $\kappa_1 > 1/4$ there holds for $T_1$ close to $T_1^*$:
\begin{equation} \label{bd:insta-theo}
\sup_{ \begin{smallmatrix} 0 < \e < \e_0 \\ 0 \leq t \leq T_0  \e^{1/4} |\ln \e|^{1/2} \end{smallmatrix}}\e^{-\kappa_1} \big|(u-u^a,v-v^a)(t) \big|_{L^2 \cap
L^\infty}=\infty.
\end{equation}
\end{itemize}

\end{theo}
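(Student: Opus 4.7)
The plan is to follow the general instability scheme of \cite{em4} but adapted to the degenerate case $U_{0,p}(0,\cdot)=0$. First, I would write the perturbation equation for $(\dot u,\dot v) := (u-u^a,v-v^a)$, which (using \eqref{sys:app} and bilinearity of $F,G,H$) takes the form
\be
\d_t \dot U + \tfrac{1}{\e} \mathcal{A}_0 \dot U + \mathcal{A}_1 \d_x \dot U = \tfrac{1}{\sqrt\e}\bigl( 2 B(U^a,\dot U) + B(\dot U,\dot U) \bigr) + \e^{K_a} r^\e, \nn
\ee
with initial data of size $O(\e^K)$ in the appropriate norm. The linear-in-$\dot U$ coefficient $2B(U^a,\cdot)$ is the source of the destabilizing resonant interactions; its leading part in $\e$ is driven by the higher-order harmonic $U_{0,3}(t,x)$ built in Proposition~\ref{prop:wkb}, and since $U_{0,3}(0,\cdot)=0$ but $\d_t U_{0,3}(0,\cdot)\neq 0$, the interaction coefficient vanishes to first order at $t=0$.

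Next I would reduce to an interaction system by normal form: following \cite{em4}, spectrally project on the relevant eigenmodes associated with the resonant pair $(j,j',\pm 3)$ at frequencies $\xi$ in a small neighbourhood of the resonant set $R_{jj',\pm 3}$, then use the strong transparency of \emph{all other} triples (which was used in the WKB construction in Proposition~\ref{prop:wkb}) to eliminate non-resonant interactions via a pseudo-differential change of unknown. After rescaling time by $t \mapsto \e^{1/4}t$, the reduced system has the form
\be
\d_t w + \tfrac{1}{\e^{3/4}} \op_\e\bigl(M_0(t)\bigr) w = f, \nn
\ee
where $f$ collects nonlinear terms and remainders of size $O(\e^{\kappa})$ for some $\kappa>0$, and the symbol $M_0(t)$ has the block form displayed in Section~\ref{sec:intr-symflow} with off-diagonal entries $\e^{3/4} t\,\tilde b_{jj'}$ linear in $t$, obtained by Taylor expanding $U_{0,\pm 3}(\e^{1/4}t,\cdot)$ around $0$. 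At this point I would invoke the Duhamel representation of \cite{dua} to express $w$ through the symbolic flow $\widetilde S_0(\t;t)$ solving \eqref{intr:sys-flow-2}.

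The main obstacle, and the heart of the argument, is obtaining \emph{matching} upper and lower bounds for $\widetilde S_0$ of the form $\exp\bigl((t^2-\t^2)\g^+/2\bigr)$, with $\g^+ = \sup \sqrt{\tr(\tilde b_{12}\tilde b_{21})}$, since the system is non-autonomous and the explicit exponential formula from \cite{em4} is unavailable. I would follow the strategy sketched in Section~\ref{sec:intr-symflow}: diagonalize $M_0(t)$ pointwise (the eigenvalues are given in \eqref{intro-ev}), apply each eigenprojector $\Theta_j(t)$ to $\widetilde S_0$ to obtain a scalar Duhamel formula, and use $\Re\bigl(\e^{-3/4}\g_j(t)\bigr)\leq \g^+ t$. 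The singularity $\d_t\Theta_j(t)=O(1/t)$ near $t=0$ is resolved by splitting time into $[0,c_0]$ (handled by a rough estimate) and $[c_0,\infty)$ via the shift $\widetilde S_1(\t;t):=\widetilde S_0(\t;t+c_0)$; where $M_0(t)$ fails to be diagonalizable, a fixed conjugation $P$ yields an almost skew-Hermitian symbol up to a harmless $c_0 t \e^{3/4}$ error. Combining these cases yields the desired bound with an inconsequential factor $\exp(Ct)$, negligible on the target time-scale $t\lesssim |\ln\e|^{1/2}$. The lower bound follows from the matching leading-order asymptotics of $\exp\bigl(-M_0(t)(t-\t)/\e^{3/4}\bigr)$ in the autonomous frozen-symbol approximation, localized in a small neighbourhood of the resonant set so that $\g^+$ is almost attained.

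Finally, plugging these bounds into the Duhamel formula for $w$ and running a standard contradiction/bootstrap argument: if $\sup_{t\leq T_0\e^{1/4}|\ln\e|^{1/2}}|(\dot u,\dot v)(t)|_{L^2}\leq \e^{\kappa_0}$ with $\kappa_0> d/2+1/4$, then the $L^\infty$ norm of the nonlinear term is controlled by Sobolev embedding ($L^\infty \hookleftarrow H^{s_0}$ costs an $\e^{-d/2}$ factor compared to $L^2$), and Duhamel propagation combined with the sharp bound on $\widetilde S_0$ forces the amplification factor $\exp\bigl(t^2\Gamma_1/2\bigr)$ applied to the initial perturbation of size $\e^K$ to be $\gg \e^{\kappa_0}$ precisely when $t>T_0\e^{1/4}|\ln\e|^{1/2}$ with $T_0$ close to $T_0^*=\sqrt{2(K-d/2-1/4)/\Gamma_1}$, a contradiction yielding \eqref{est:th3}. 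The second conclusion \eqref{bd:insta-theo} follows from the same scheme but working directly in $L^2\cap L^\infty$, so no $d/2$ loss is incurred and the threshold becomes $T_1^*=\sqrt{2(K-1/4)/\Gamma_1}$. The technical heart of the whole proof, which I would isolate in the appendices, is the symbolic flow estimate; everything else is a careful bookkeeping of the framework of \cite{em4,dua}.
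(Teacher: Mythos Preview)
Your outline follows the paper's approach closely: perturbation equation, normal form reduction onto non-transparent resonant modes, time rescaling $t\mapsto\e^{1/4}t$, Duhamel representation via the symbolic flow $\widetilde S_0$, and the non-autonomous flow estimate via diagonalization plus the $t\mapsto t+c_0$ shift. Two points need correction.

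First, a terminological slip: the WKB construction in Proposition~\ref{prop:wkb} uses \emph{weak} transparency \eqref{wt0}--\eqref{wt1}, not strong transparency. The strong transparency of the interaction coefficients associated with the fundamental harmonics $v_{0,\pm1}$ (Lemma~4.2 in the paper) and of the off-resonance parts of the $u_{0,\pm3}$ coefficients is verified separately, and it is \emph{this} that allows the normal form reduction eliminating $\mathcal D$.

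Second, and more substantively, your endgame for \eqref{est:th3} is not quite right. You propose a contradiction argument assuming $|(\dot u,\dot v)|_{L^2}\leq \e^{\kappa_0}$ and then invoke Sobolev embedding to control the nonlinearity in $L^\infty$; but an $L^2$ bound alone gives no $L^\infty$ control. The paper instead runs a genuine \emph{bootstrap in the semiclassical $H^s$ norm} (Proposition~4.9): one defines ${\bf T}$ as the supremum of times for which $\|{\bf V}\|_{\e,s}\leq \e^{-\kappa+d/2+1/4+\iota}$, uses the semiclassical Sobolev embedding $|(\dot u,\dot v)|_{L^\infty}\leq C\e^{-d/2}\|{\bf V}\|_{\e,s}$ to close the nonlinear estimate, and shows ${\bf T}\geq T|\ln\e|^{1/2}$. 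Only \emph{after} existence is secured does one derive \eqref{est:th3} directly (not by contradiction) from the lower bound in Lemma~4.12, which requires a carefully chosen initial perturbation $\phi_1^\e(x)=e^{ix(\xi_0+3k)/\e}\Psi(x)\vec e_0$ with $\vec e_0$ in the image of the growing eigenprojector $\Pi_+(x,\xi_0)$ of $\widetilde M_0$. Your ``frozen-symbol'' description of the lower bound is vague; what actually happens is that at the resonant frequency $\xi_0$ one has $\l_1=\mu$, so $\widetilde S_0(0;t,x,\xi_0)$ admits the explicit formula $e^{-i\e^{-3/4}\l_1 t}\exp\bigl(t^2\varphi_1|\d_t g(0,x)|\widetilde M_0/2\bigr)$, from which the exponential growth is read off directly. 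The contradiction scheme you sketch is correct for the \emph{second} estimate \eqref{bd:insta-theo}, where one assumes an $L^2\cap L^\infty$ bound and hence has $L^\infty$ control by hypothesis.
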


\medskip

  We make two remarks about Theorem \ref{theorem1}:
\begin{itemize}

  \item  first remark: We can think of $K_a$ being equal to $K$ and large. Then
the initial perturbation is very small, and the WKB solution almost solves the
system \eqref{KG} of coupled KG equations. The point we make in Theorem \ref{theorem1} is that
even though the WKB solution is very close to solving the exact system, it is
somehow not close to the exact solution. Parameter $\kappa_0$ measure how 'far' the WKB
solution strays from the exact solution: the distance between $(u,v)$ and $(u_a,v_a)$ goes from
$\e^K$, very small ($K$ arbitrarily large), to much larger than $\e^{\k_0}$ (with $\k_0$
fixed, depending on the dimension), in short time $O(\e^{1/4} |\ln \e|^{1/2})$.

  \item second remark: The second result \eqref{bd:insta-theo} is obtained
   by \emph{assuming} the existence of  solution $u$ on $[0, T_1 \e^{1/4} |\ln \e|^{1/2}]$ with any $T_1< T_1^*$. The existence time we obtain here is $[0, T_0 \e^{1/4} |\ln \e|^{1/2}]$ with $T_0<T_0^*$ and clearly $T_0^*<T_1^*$. Hence an open problem is to show the existence in longer time.

\end{itemize}


\section{Proof of Proposition \ref{prop:wkb}}\label{sec:wkb}

The aim of  this section is to construct an approximate solution
through WKB expansion. At the same time, this gives a proof for
Proposition \ref{prop:wkb}.

\subsection{WKB expansion}\label{sec:wkb-exp}
We describe approximate solutions to \eqref{coupled-KG} in the forms
of WKB expansions for \emph{profiles} with  $\theta = (k x - \o t )/\e$:
 \begin{equation} \label{ansatz}
  \begin{aligned}
   (u,v)(t,x)  = \sum_{n = 0}^{2K_a} \e^{n/2}  ({\bf u}_n, {\bf v}_n)(t,x,\theta) = \sum_{n = 0}^{2K_a} \e^{n/2} \sum_{p \in {\cal H}_n} e^{ip \theta} (u_{n,p}, v_{n,p})(t,x) , \end{aligned}
  \end{equation}
 where $2K_a\in \Z_+$ determines the precision of the expansion. For $(u,v)$ in \eqref{ansatz},
  $$ (F(u+v, v), G(u,u), H(v,v)) = \left[ \sum_{n = 0}^{2K_a} \e^{n/2} ({\bf F}_n, {\bf G}_n, {\bf H}_n) \right].$$
We let  $({\bf u})_p$ denote the $p$-harmonic of a
trigonometric polynomial ${\bf u}$ in $\theta.$
  For example, in \eqref{ansatz}, $({\bf u}_n, {\bf v}_n)_p = (u_{n,p},v_{n,p}).$
  We denote by ${\mathbb P}$ and ${\mathbb Q}$ the operators acting diagonally on trigonometric polynomials,
  defined as $${\mathbb P} {\bf u} = \sum_p e^{i p \theta} P(p\tilde \b) ({\bf u})_p,\quad {\mathbb Q} {\bf u} = \sum_p e^{i p \theta} Q(p\tilde \b) ({\bf
  u})_p,$$ where $P(p\tilde\b)$ and $Q(p\tilde\b)$  are orthogonal projectors onto $\ker
  L(ip\tilde\b)$ and $\ker
  M(ip\tilde\b)$ respectively with the definitions
$$
 L(ip\tilde\b):=L\big(\o_0,(-ip\o,ipk)\big),\quad
 M(ip\tilde\b):=M\big(\o_0,(-ip\o,ipk)\big),
$$
for which we use the notation \eqref{notation-lm}.

 We now plug \eqref{ansatz} into \eqref{coupled-KG} and write a cascade of WKB equations, the first of which comprises all terms of order $O(\e^{-1}):$
 \begin{equation} \label{wkb0}
  L(\tilde\b \d_\theta) {\bf u}_0 = 0, \qquad M(\tilde\b \d_\theta) {\bf v}_0 = 0.
  \end{equation}
 By \eqref{und-phase}, equation \eqref{wkb0} implies
 \begin{equation}
 {\bf u}_0 = u_{0,-3} e^{- 3 i \theta} + u_{0,0} + u_{0,3} e^{3 i \theta},
 \qquad {\bf v}_0 = v_{0,-1} e^{- i \theta} + v_{0,0} + v_{0,1} e^{ i
 \theta}
 \nn
 \end{equation}
with $u_{0,p}\in\ker L(ip\tilde\b ), v_{0,p}\in\ker M(ip\tilde\b )$. Direct
calculation gives \be\label{e1-e3}v_{0,1}=f \vec e_1,\qquad
u_{0,3}:=g \vec e_3,\ee where  $f$ and $g$ are scalar functions
determined later by \eqref{evol-wkb};
 $\vec e_1$ and $\vec e_3$ are constant vectors, which form the bases of vector spaces $ \ker M(i\tilde\b)$ and $\ker L(i3\tilde\b) $ respectively:
\be\label{e1-e3-exact}\vec e_1:=\left(-\frac{\th_0k}{\o},1,\frac{i
\o_0}{\o}\right),\qquad \vec e_3:=\left(-\frac{k}{\o},1,\frac{i\a_0
\o_0}{3\o}\right).\ee For negative $p$, reality requires
$u_{0,p}=\bar u_{0,-p},~v_{0,p}=\bar v_{0,-p}$. We define $\vec
e_{p}:=(\vec e_{-p})^*$ for $p\in\{-3,-1\}$.

\smallskip

 The $O(\e^{-1/2})$ terms in the expansion are
 \begin{equation} \label{wkb1}
   L(\tilde\b \d_\theta) {\bf u}_1  = {\bf F}_0, \quad
   M(\tilde\b \d_\theta) {\bf v}_1  = {\bf G}_0 + {\bf H}_0.
   \end{equation}
  A consequence of \eqref{wkb1} is the following  necessary condition
  \begin{equation} \label{wt0}
   P(p\tilde \b) ({\bf F}_0)_p  = 0, \quad   Q(p \tilde\b) ({\bf G}_0 + {\bf H}_0)_p  = 0,
   \end{equation}
   for all $p,p' \in \Z.$ By the properties of the characteristic varieties,
   the choice of $\b$ \eqref{und-phase} and the structure of the bilinear terms
   \eqref{def-F-G-H}, condition \eqref{wt0} is equivalent to the  following compatibility condition
   for all $p\in\Z$:
  \begin{equation} \label{wt1} \begin{aligned}
   P(p\tilde \b) \sum_{p_1+p_2=p}F\big((P+Q)(p_1\tilde\b)\cdot,Q(p_2\tilde\b)\cdot\big) & = 0, \\
   Q(p \tilde\b) \sum_{p_1+p_2=p}\Big(G\big(P(p_1\tilde\b)\cdot,P(p_2\tilde\b)\cdot\big)+H\big(Q(p_1\tilde\b)\cdot,Q(p_2\tilde\b)\cdot\big)\Big)& = 0.
   \end{aligned}
   \end{equation}
In our context, we find out  \eqref{wt1} is satisfied. This is in fact the weak transparency condition. Since there is no mean mode in initial
datum \eqref{id-KG}, we simply take $u_{0,0}=v_{0,0}=0$. We remark
that this choice of zero mean mode is not necessary to construct an
approximate solution by WKB expansion.

 The equation \eqref{wkb1} also implies
  \begin{equation} \label{1st-cor}
  (1 - P(p\tilde \b)) u_{1,p}  = L(i p \tilde\beta)^{(-1)} ({\bf F}_0)_p,~ (1 - Q(p \tilde\b)) v_{1,p}  = M(i p \tilde\beta)^{(-1)} ({\bf G}_0 + {\bf H}_0)_p,
  \end{equation}
  where $L^{(-1)}$ and $M^{(-1)}$ denote partial inverses, naturally defined on the
   orthogonal complements of $\ker L$ and $\ker M.$

\medskip

 The $O(\e^0)$ terms are
   \begin{equation} \label{wkb2}
   L(\tilde\b \d_\theta) {\bf u}_2 + L(0,\d) {\bf u}_0  = {\bf F}_1,\quad
   M(\tilde\b \d_\theta) {\bf v}_2 + M(0,\d) {\bf v}_0  = {\bf G}_1 + {\bf H}_1.
   \end{equation}
  From \eqref{wkb2} we deduce
  \begin{equation} \label{evol-wkb}
  P(3\tilde \b) L(0,\d) P(3\tilde\b) u_{0,3} = P(3\tilde \b) ({\bf F}_1)_{3},  \quad
  Q(\tilde\b) M(0,\d) Q( \tilde\b) v_{0,1}  = Q(\tilde\b) ({\bf G}_1 + {\bf H}_1)_1.
  \end{equation}
  By \eqref{wt0}, \eqref{1st-cor} and symmetry of $F,$ the source term in $\eqref{evol-wkb}$ is
  \begin{equation} \begin{aligned}
  P(3\tilde \b) ({\bf F}_1)_3 & = 2 P(3 \tilde\b) F( M(2 i \tilde\b)^{-1} H(v_{0,1}, v_{0,1}), v_{0,1}) + (\tilde {\bf F}_1)_3(u_{0,\pm 3},v_{0,\pm 1}), \end{aligned}
  \nn \end{equation}
   where $\tilde {\bf F}_1$ is a polynomial in $(u_{0,\pm 3},v_{0,\pm 1}).$ Since there is no third order harmonic in the leading terms of the initial data \eqref{id-KG}, we choose always $u_{0,3}(0,\cdot)=0$.
   Then the initial value $
  \tilde {\bf F}_{1}(0,\cdot) = 0.
  $
  By \eqref{wt0} and \eqref{1st-cor}, the source term in \eqref{evol-wkb}(ii)
  is also a polynomial in $(u_{0,\pm 3},v_{0,\pm 1})$ that admits zero initial value. By reality of the nonlinear terms, and symmetry of $L$ and $M,$
  the system in $(u_{0,-3}, v_{0,-1})$ is conjugated to \eqref{evol-wkb}. The differential operators in the
  left-hand side of \eqref{evol-wkb} are transport operators at the group velocities $\d_\xi \l(3 \tilde\b)$ and $\d_\xi \mu(\tilde\b)$
  respectively (for a proof of this fact, see \cite{JMR1, T1}), where
  $\l$ and $\mu$ are defined in \eqref{kg-modes}.

A consequence of the compatibility condition \eqref{wt1}  is that
the formal WKB equations lead to \emph{closed} transport equations in
$(u_{0,\pm 3},v_{0,\pm 1})$ with polynomial source terms. This
implies that, given initial data $(u_{0,3},v_{0,1})(0)=(0,v^0)\in
H^s,~s>d/2$, system \eqref{evol-wkb} and its conjugate system
admit a unique solution $(u_{0,\pm 3},v_{0,\pm 1})$ on $[0,T]$ for
some $T>0$ independent of $\e$. Moreover, there holds the estimate:
\be\label{est-uv0} \|(u_{0,\pm 3},v_{0,\pm
1})\|_{L^\infty([0,T], H^s )}+\|\d_t(u_{0,\pm 3},v_{0,\pm
1})\|_{L^\infty([0,T], H^{s-1})}\leq C (1+T).\ee In
particular, since $u_{0,3}(0,\cdot)\equiv 0$, together with
\eqref{e1-e3}, we have $g(0,\cdot)\equiv0$ and \be\label{est-g}
\|g\|_{L^\infty([0,T], H^s )}+\|\d_t
g\|_{L^\infty([0,T], H^{s-1} )}\leq CT. \ee

\smallskip

One key point here is that, by the structure of \eqref{kg-modes},
the third-order harmonics $u_{0,\pm3}$ grow
in time on $[0,t]$ for some $0<t\leq T$. Indeed, by
$(u_{0,3},v_{0,1})(0)=(0,v^0)$,
$\eqref{evol-wkb}_1$, and polarization condition $v^0=Q(\b)v^0$, there holds for nonzero $v_0$ that
\begin{equation} \label{new-structure}
\big(\d_t u_{0,3}\big)(0,x)=2 P(3\tilde \b) F\big( M(2 i\tilde \b)^{(-1)} H(v^0,
v^0), v^0\big)(x)\neq 0.
 \end{equation}
The equation \eqref{wkb2} also implies
  \begin{equation}  \begin{aligned}
  (1 - P(p\tilde \b)) u_{2,p} & = L(i p \tilde\beta)^{(-1)} \big(-L(0,\d) u_{0,p}+({\bf F}_0)_p\big),  \\
  (1 - Q(p\tilde \b)) v_{2,p} & = M(i p\tilde \beta)^{(-1)} \big(-M(0,\d) v_{0,p}+({\bf G}_0 + {\bf  H}_0)_p\big).
  \end{aligned}
 \nn \end{equation}

\subsection{The approximate solution and Proof of Proposition \ref{prop:wkb}}

With the compatibility condition \eqref{wt1}, a similar proof as
Theorem 2.3 in \cite{JMR2}, or an application of Section 6.6 in
\cite{em4}, we can continue the  WKB expansion in Section
\ref{sec:wkb-exp} up to any order. This implies that, for any
$K_a>0$, there exists a WKB solution $(u^a,v^a)$ of the form
\eqref{ansatz}, such that on $[0,T]$ for some $T>0$ independent of $\e$: \be
 \left\{ \begin{aligned}
     L(\frac{ \o_0}{\e},\d) u^a & = \frac{1}{\sqrt \e} F(u^a + v^a,v^a)+\e^{K_a}r_1^\e, \\
     M(\frac{ \o_0}{\e}, \d) v^a & = \frac{1}{\sqrt \e} \big(G(u^a,u^a) +
     H(v^a,v^a)\big)+\e^{K_a}r_2^\e.
     \end{aligned}
     \right.\nn\ee
The other results in Proposition \ref{prop:wkb} are
obtained from the standard WKB expansion.

\section{Proof of Theorem \ref{theorem1}}\label{sec:slow-ins}

We show in this section that the WKB solution $(u^a,v^a)$
constructed in Section \ref{sec:wkb}. is unstable in the sense stated in Theorem
\ref{theorem1}.

\subsection{Preparation}

 By symmetry of the hyperbolic operator, for $\e
> 0$ the solution $(u,v)$ to the initial value problem
\eqref{coupled-KG},\eqref{id-KG-pert} is defined on time
interval $[0,T(\e)]$ for some $T(\e) > 0.$ By \eqref{coupled-KG},  \eqref{sys:app} and
\eqref{id-KG-pert},  the unknown perturbation
$(\dot u,\dot v)$ defined as
\begin{equation} \label{def:dot-u}
 (u,v) =: (u^a,v^a) + \e^\kappa   (\dot u,\dot v) \qquad  \mbox{for some $1/4<\kappa  \leq  \min\{K,K_a+1/4\}$}
 \end{equation}
 satisfies
\begin{equation} \label{eq:dot-u}
    \left\{ \begin{aligned}
     L(\frac{ \o_0}{\e},\d) \dot u & = \frac{1}{\sqrt \e} \big(F(u^a)\dot v+F(v^a)\dot u+2F(v^a)\dot v\big)+\frac{\e^{\kappa  }}{\sqrt\e} F(\dot u+\dot v,\dot v)-\e^{K_a-\kappa  }r_1^\e, \\
     M(\frac{ \o_0}{\e}, \d) \dot v & =\frac{2}{\sqrt \e} \big(G(u^a)\dot u+H(v^a)\dot v\big)+\frac{\e^{\kappa  }}{\sqrt\e}\big(G(\dot u,\dot u)+H(\dot v,\dot v)\big) -\e^{K_a-\kappa  }r_2^\e,\\
 \dot u(0,x) &=\e^{K-\kappa  } \phi_1^\e(x), \qquad \dot v(0,x) = \e^{K-\kappa  } \phi_2^\e(x),
     \end{aligned}
     \right.
    \end{equation}
where we define $B(a) b := B(a, b) $ for any $B\in\{F,G,H\}$.


\subsubsection{Spectral decompositions, resonances and transparencies}

According to  \eqref{spe-dec}, we write the  decompositions in semiclassical Fourier multipliers:
\be\begin{aligned}
&L(\frac{\o_0}{\e},\d_t,\d_x):=\d_t+\frac{i}{\e}\big(\op_\e(\l_+)\op_\e(P_+)+\op_\e(\l_-)\op_\e(P_-)+\op_\e(\l_0) \op_\e(P_0)\big),\\
&M(\frac{\o_0}{\e},\d_t,\d_x):=\d_t+\frac{i}{\e}\big(\op_\e(\mu_+)\op_\e(Q_+)+\op_\e(\mu_-)\op_\e(Q_-)+\op_\e(\mu_0)
\op_\e(Q_0)\big)\end{aligned}\nn\ee with
$$\l_+=-\l_-:=\l,\quad \mu_+=-\mu_-:=\mu,\quad \l_0=\mu_0:=0.$$
To unify the notations, we denote for $j\in\{+,-,0\}$:
$$
\l_j^L:=\l_j,\quad \l_j^M:=\mu_j,\quad \Pi_j^L:=P_j,\quad
\Pi_j^M:=Q_j.
$$
By Proposition \ref{prop:wkb}, we have for $B\in\{F,G,H\}$: \be\label{lin-sour}
B(u^a)=\sum_{p=\pm3}e^{ip\th}B(u_{0,p})+\sqrt\e B(u^\e_r),\quad
B(v^a)=\sum_{p=\pm1}e^{ip\th}B(v_{0,p})+\sqrt\e B(v_r^\e). \ee This
indicates that the singular linear source terms (of order $1/\sqrt\e$) are essentially those associated
with the leading terms of the WKB solution: $u_{0,\pm 3}$ and
$v_{0,\pm 1}$.

As we introduced in Introduction, resonances  are zero points to the factors appearing in strong transparency condition. They actually correspond to crossing of eigenmodes, and are defined as obstruction to the solvability of homological equations that arise in normal form change of variables used to prove the stabilities of WKB solutions in for example \cite{JMR2}. They play an important role
 in the well-posedness analysis. We give the precise definitions in our setting:
\begin{defi}[Resonances and interaction coefficients]
Given $(i,j) \in \{+,-, 0\}^2,$ $p\in\{-3,-1,1,3\}$ and
$(\de,\sigma)\in\{L,M\}^2$, we define the resonance set
$$
{\bf \cal R}_{ij,p}^{\de,\sigma}:=\big \{\xi\in \R, \quad p\o =
\l^\de_i(\xi+pk) - \l^\sigma_{j}(\xi) \big\}.
$$
For bilinear form $B\in\{F,G,H\}$, the families of matrices $
\Pi^\de_{i}(\xi+pk)B(\vec e_p)\Pi^\sigma_{j}(\xi) $ and $
\Pi^\sigma_{j}(\xi)B(\vec e_{-p})\Pi^\de_{i}(\xi+pk) ,$ indexed by
$\xi \in \R^1,$ are called the interaction coefficients associated
with $(i,j,p,\de,\sigma).$ The scalar function $\xi \to \l_i^\de(\xi
+ pk) - \l^\sigma_j(\xi) - p\o$ is called the resonant phase
associated with $(i,j,p,\de,\sigma).$
\end{defi}
\noindent We recall $e_{p}$ come from the polarization condition \eqref{e1-e3} and are given in \eqref{e1-e3-exact}.

\begin{defi}[Transparencies]
An interaction coefficient $\Pi^\de_{i}(\cdot+pk)B(\vec
e_{p})\Pi^\sigma_{j}(\cdot)$ or $\Pi^\sigma_{j}(\cdot)B(\vec
e_{-p})\Pi^\de_{i}(\cdot+pk)$ is said to be {\it transparent} if the
associated resonant phase can be factored out, which means there holds for all $\xi \in \R^1$ the bound
$$
|\Pi^\de_{i}(\xi+pk)B(\vec e_{p})\Pi^\sigma_{j}(\xi)|\leq C
|\l^\de_i(\xi+pk)-p\o-\l^\sigma_{j}(\xi)|,
$$
or
$$
|\Pi^\sigma_{j}(\xi)B(\vec e_{-p})\Pi^\de_{i}(\xi+pk))| \leq C
|\l^\de_i(\xi+pk)-p\o-\l^\sigma_{j}(\xi)|.$$

\end{defi}

\begin{rem}\label{rem:nore-tran}
(i), Interaction coefficients $\Pi^\de_{i}(\xi+pk)B(\vec e_{p})\Pi^\sigma_{j}(\xi)$ and $\Pi^\sigma_{j}(\xi)B(\vec e_{-p})\Pi^\de_{i}(\xi+pk))$ share the same resonant phase $\l^\de_i(\xi+pk)-p\o-\l^\sigma_{j}(\xi)$ and resonant set ${\bf \cal R}_{ij,p}^{\de,\sigma}$. Equivalently, ${\bf \cal R}_{ij,p}^{\de,\sigma}$ and ${\bf \cal R}_{ji,-p}^{\sigma,\de}$ share essentially (a shift of $\xi$ with $pk$ or $-pk$) the same interaction coefficients, so it is enough to consider one of theses two resonant sets and study the transparency property of the associated interaction coefficients.

(ii), If ${\bf \cal R}_{ij,p}^{\de,\sigma}$ is empty, by the smoothness of
the eigenmodes \eqref{kg-modes}, there holds for some $c>0$ and all $\xi\in\R^1$ the
lower bound:
$$|\l^\de_i(\xi+pk)-p\o-\l^\sigma_{j}(\xi)|\geq c.$$
Then the  related interaction coefficients
$\Pi^\de_{i}(\cdot+pk)B(\vec e_{p})\Pi^\sigma_{j}(\cdot)$ and
$\Pi^\sigma_{j}(\cdot)B(\vec e_{-p})\Pi^\de_{i}(\cdot+pk)$ are all
transparent. This is true because the interaction coefficients are
uniformly bounded in $\xi$, see later \eqref{proj}, \eqref{proj2},
\eqref{proj01} and \eqref{proj02}.
\end{rem}

We now calculate all the non-empty resonance sets and the
transparency property  for the related interaction coefficients.
Remark that, we only need to calculate the interaction coefficients appearing in \eqref{eq:dot-u}.

Associated with the fundamental harmonics $v_{0,\pm1}$ (namely $e_{\pm 1}$), with our choice
$0<\th_0<1$ and $2.5<\a_0<3$, essentially  there are four non-empty
resonance sets:
\be\label{res:p=1}
\begin{aligned}
&{\bf \cal R}_{+0,1}^{M,L}={\bf \cal R}_{+0,1}^{M,M}=\big \{\xi\in
\R, \quad \o =
\mu_+(\xi+k)\}=\{0,-2k\},\\
&{\bf \cal R}_{0-,-1}^{L,M}={\bf \cal R}_{0-,-1}^{M,M}=\big \{\xi\in
\R, \quad \o = 0-\mu_-(\xi-k)\}=\{0,2k\}.
\end{aligned}
\ee To check the transparency property of the interaction
coefficients,  we need to calculate the eigen-projections:
\\(i) for $j\in\{+,-\}$, $\de\in\{L,M\}$, there holds
\be\label{proj} \Pi_j^\de(\xi)
V=\frac{\big(V,\O^\de_j(\xi)\big)\O^\de_j(\xi)}{|\O^\de_j(\xi)|^2}=\frac{\big(V,\O^\de_j(\xi)\big)\O^\de_j(\xi)}{2},
\ee where \be\label{proj2} \O^L_j(\xi):=\left(\frac{-\xi}{\l_j(\xi)}
,1,\frac{i\a_0\o_0}{\l_j(\xi)}\right),\quad \O^M_{j}:=\left(
\frac{-\th_0\xi}{\mu_{j}(\xi)} ,1,\frac{i\o_0}{\mu_{j}(\xi)}\right).
\ee
(ii) for $j=0$, $\de\in\{L,M\}$, there holds \be\label{proj01}
\Pi_0^\de(\xi)
V=\frac{\big(V,\O^\de_0(\xi)\big)\O^\de_0(\xi)}{|\O^\de_0(\xi)|^2},
\ee where \be\label{proj02} \O^L_0(\xi):=\left(1
,0,-\frac{i\xi}{\a_0\o_0}\right),\quad \O^M_{0}:=\left(
1,0,-\frac{i\th_0\xi}{\o_0}\right). \ee To calculate the interaction
coefficients related to the non-empty resonance sets in
\eqref{res:p=1}, by \eqref{eq:dot-u} and \eqref{lin-sour}, it is
sufficient to calculate \be\label{proj3}
\begin{aligned}
&P_0(\xi)F(\vec e_{-1})Q_+(\xi+k),\quad Q_0(\xi)H(\vec
e_{-1})Q_+(\xi+k),\quad Q_+(\xi+k)H(\vec e_{1})Q_0(\xi),\\
&P_0(\xi)F(\vec e_{1})Q_-(\xi-k),\quad Q_0(\xi)H(\vec
e_{1})Q_-(\xi-k),\quad Q_-(\xi-k)H(\vec e_{-1})Q_0(\xi).
\end{aligned}
\ee
With our choice of the bilinear forms as in \eqref{def-F-G-H}, we
find the interaction coefficients in \eqref{proj3} are all
identically zero. Together with Remark \ref{rem:nore-tran} (ii), we have:
\begin{lem}\label{lem:v01-tran}
All the interaction coefficients related to the fundamental
harmonics $ v_{\pm1}$ are transparent.
\end{lem}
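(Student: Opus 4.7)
The plan is to split the claim along two axes: the empty versus non-empty resonance sets. For the empty ones, Remark \ref{rem:nore-tran}(ii) gives transparency essentially for free, since the resonant phase admits a uniform positive lower bound while the interaction coefficients are bounded in $\xi$ via the explicit formulas \eqref{proj}--\eqref{proj02}. So the content of the lemma lies in the non-empty sets, which for $p = \pm 1$ are exactly the four sets listed in \eqref{res:p=1}. For these I would prove the strictly stronger statement that each of the six interaction coefficients enumerated in \eqref{proj3} vanishes identically in $\xi$; transparency then follows trivially.

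The verification rests on one structural observation, applied twice. By \eqref{def-F-G-H}, the ranges of $F$, $G$ and $H$ are all contained in $\mathrm{span}\{(0,1,0)\}$. On the other hand, by \eqref{proj02}, the vectors $\Omega_0^L(\xi)$ and $\Omega_0^M(\xi)$ have vanishing second component for every $\xi$. In particular $\bigl((0,1,0),\Omega_0^\delta(\xi)\bigr) = 0$, so \eqref{proj01} forces $\Pi_0^\delta(\xi) B(u,v) = 0$ for all $u,v$ and all $B \in \{F,G,H\}$. This kills the four coefficients in \eqref{proj3} that begin with a projector onto the zero mode, namely $P_0(\xi)F(\vec e_{\pm 1})Q_\mp(\xi\pm k)$ and $Q_0(\xi)H(\vec e_{\pm 1})Q_\mp(\xi\pm k)$.

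The two remaining coefficients, $Q_+(\xi+k)H(\vec e_1)Q_0(\xi)$ and $Q_-(\xi-k)H(\vec e_{-1})Q_0(\xi)$, I would handle by the dual observation. The image of $Q_0(\xi)$ is spanned by $\Omega_0^M(\xi)$, which again has zero second component. Since $H(u,v) = (0, u_2 v_2, 0)$ only sees the second components of its inputs, and $(\vec e_{\pm 1})_2 = 1$ by \eqref{e1-e3-exact}, we get $H(\vec e_{\pm 1}, Q_0(\xi) V) = 0$ for every $V \in \mathbb{C}^3$. Hence $Q_\pm(\xi \pm k) H(\vec e_{\pm 1}) Q_0(\xi) = 0$.

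The only part of the argument that demands care is bookkeeping: one must confirm that \eqref{res:p=1} is indeed the complete list of non-empty resonance sets for $p = \pm 1$ across all choices of $(i,j,\delta,\sigma) \in \{+,-,0\}^2 \times \{L,M\}^2$ compatible with the bilinear terms in \eqref{eq:dot-u}, and that \eqref{proj3} captures every interaction coefficient these sets produce. Once the enumeration is fixed, the algebraic vanishing above is immediate, and I do not anticipate any genuine analytic obstacle.
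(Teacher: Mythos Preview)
Your proposal is correct and follows essentially the same approach as the paper: split into empty and non-empty resonance sets, invoke Remark~\ref{rem:nore-tran}(ii) for the former, and show that the six coefficients in \eqref{proj3} vanish identically for the latter. The paper simply asserts the vanishing ``by direct calculation,'' whereas you supply the structural reason (the range of $F,G,H$ is orthogonal to $\Omega_0^\delta$, and the image of $Q_0$ has zero second component), which is a clean and correct way to see it.
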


Associated with the third order harmonics $u_{0,\pm3}$ (namely $\e_{\pm3}$), the non-empty
resonance sets are
\be\label{res:p=3}
\begin{aligned}
&{\bf \cal R}_{+0,3}^{L,L}={\bf \cal R}_{+0,3}^{L,M}=\big \{\xi\in
\R, \quad 3\o =
\l_+(\xi+3k)\}=\{0,-6k\},\\
&{\bf \cal R}_{0-,-3}^{L,L}={\bf \cal R}_{0-,-3}^{M,L}=\big \{\xi\in
\R, \quad 3\o = 0-\l_-(\xi-3k)\}=\{0,6k\},\\
&{\bf \cal R}_{+0,3}^{M,L}={\bf \cal R}_{+0,3}^{M,M}=\big \{\xi\in
\R, \quad 3\o = \mu_+(\xi+3k)\}=\{-\xi_1-3k,\xi_1-3k\},\\
&{\bf \cal R}_{0-,-3}^{L,M}={\bf \cal R}_{0-,-3}^{M,M}=\big \{\xi\in
\R, ~ 3\o = 0-\mu_-(\xi-3k)\}=\{\xi_1+3k,-\xi_1+3k\},\\
&{\bf \cal R}_{++,3}^{L,M}=\big \{\xi\in \R, \quad 3\o =
\l_+(\xi+3k)\}-\mu_+(\xi)\}=\{\xi_2,\xi_3\},\\
&{\bf \cal R}_{--,-3}^{M,L}=\big \{\xi\in \R, \quad 3\o =
\mu_-(\xi)-\l_-(\xi-3k)\}=\{-\xi_2,-\xi_3\},
\end{aligned}
\ee where $\xi_1=\sqrt{9\o^2-\o_0^2}/\th_0$ such that
$\mu(\xi_1)=3\o$, $\xi_2$ and $\xi_3$ are solutions to
$$
3\o=\sqrt{(\xi+3k)^2+\a_0^2\o_0^2}-\sqrt{\th_0^2\xi^2+\o_0^2}.
$$

\smallskip

For \eqref{eq:dot-u}, the interaction coefficients related to the
non-empty resonance sets in \eqref{res:p=3} and the corresponding
resonances are
$$
\begin{aligned}
&P_+(\xi+3k)F(\vec e_{3})Q_0(\xi),\quad Q_0(\xi)G(\vec
   e_{-3})P_+(\xi+3k),\qquad \xi=0,-6k;\\
&P_-(\xi-3k)F(\vec e_{-3})Q_0(\xi),\quad Q_0(\xi)G(\vec
   e_{3})P_-(\xi-3k),\qquad \xi=0,6k;\\
& P_0(\xi)F(\vec e_{-3})Q_+(\xi+3k),\quad Q_+(\xi+3k)G(\vec
   e_3)P_0(\xi),\qquad \xi=-\xi_1-3k,\xi_1-3k;\\
& P_0(\xi)F(\vec e_{3})Q_-(\xi-3k),\quad Q_-(\xi-3k)G(\vec
   e_{-3})P_0(\xi),\qquad \xi=\xi_1+3k,-\xi_1+3k;\\
&P_+(\xi+3k)F(\vec e_{3})Q_+(\xi),\quad Q_+(\xi)G(\vec
   e_{-3})P_+(\xi+3k),\qquad \xi=\xi_2,\xi_3;\\
&P_-(\xi-3k)F(\vec e_{-3})Q_-(\xi),\quad Q_-(\xi)G(\vec
   e_{3})P_-(\xi-3k),\qquad \xi=-\xi_2,-\xi_3.
\end{aligned}
$$
By direct calculation,  the above interaction coefficients are all
transparent except the following ones around the following resonance
points: \be\label{non-tran-int}\begin{aligned} &P_+(\xi+3k)F(\vec
e_{3})Q_0(\xi),\quad \xi=-6k;\qquad P_-(\xi-3k)F(\vec e_{-3})Q_0(\xi),\quad \xi=6k;\\
&P_+(\xi+3k)F(\vec e_{3})Q_+(\xi),\quad Q_+(\xi)G(\vec
   e_{-3})P_+(\xi+3k),\qquad \xi=\xi_2,~\xi_3;\\
&P_-(\xi-3k)F(\vec e_{-3})Q_-(\xi),\quad Q_-(\xi)G(\vec
   e_{3})P_-(\xi-3k),\qquad \xi=-\xi_2,~-\xi_3.\end{aligned}
\ee We denote the set of all the non-transparent resonance points
appeared in \eqref{non-tran-int} by \be {\cal
R}:=\{-6k, 6k, \xi_2, \xi_3, -\xi_2,-\xi_3\}.\nn\ee We choose $\th_0$
and $\a_0$ such that the resonance points in ${\cal R}$ are pairwise distinct.
This is true except for finite choices of $\th_0$ and $\a_0$.

We will show that these non-transparent interaction coefficients in
\eqref{non-tran-int} will cause the solution $(\dot u,\dot v)$ to be
amplified instantaneously.

\subsubsection{Projections and and frequency shifts}

We observe that the leading terms of the WKB solutions are highly
oscillating, in the sense that they have prefactors  $e^{ip\th}$ with $\th=(kx-\o t)/\e,
p\in\{-3,-1,1,3\}$. These highly oscillating factors will cause technical difficulties in our analysis.
Indeed, we focus on the non-transparent interaction coefficients by
localizing the frequencies near resonances. This localization is done
by applying a semiclassical Fourier multiplier $\op_\e(\chi)$
to the related interaction coefficients (see Section \ref{s-f loc} for further
details). The symbol $\chi(\xi)$ is a cut-off function compactly
supported in a neighborhood of some resonance set. If there is a highly oscillating multiplier $e^{ip\th}$
to some interaction coefficient $b_{jj'}$, then
$$\op_\e(\chi)(e^{ip\th}b_{jj'})= e^{ip\th}\op_\e(\chi)b_{jj'}+ [\op_\e(\chi),e^{ip\th}]b_{jj'},$$
where the commutator $[\op_\e(\chi),e^{ip\th}]$ is $O(1)$ due  to
the high oscillation. We need the commutator to be
of order at least $O(\sqrt\e)$ because the linear source terms in
\eqref{eq:dot-u}, where the  interaction coefficients come from,
have prefactor $1/\sqrt\e$.

Thus we would like to eliminate those highly oscillating prefactors $e^{ip\th}$ of the non-transparent interaction coefficients. To achieve this, we introduce the following frequency shifts by defining $U:=(u_+,u_-,u_0,
v_+,v_-,v_0) \in \R^{18}$ as \be\label{de-U}\begin{aligned}
&u_{+}:=e^{-i3\th}\op_\e(P_{+})\dot u,\quad u_-
:=e^{i3\th}\op_\e(P_{-}) \dot u,\quad u_0:= \op_\e(P_{0}) \dot
u,\\&v_{+}:=\op_\e(Q_{+})\dot v,\quad v_- :=\op_\e(Q_{-}) \dot
v,\quad v_0:= \op_\e(Q_{0}) \dot v.
\end{aligned}\ee The perturbation unknown $(\dot u,\dot v)$ can be
reconstructed from $U$ via
 \begin{equation}
  \dot u =  e^{i3\th}u_{+}+e^{-i3\th}u_-+u_0,\qquad   \dot v =
  v_{+}+v_-+v_0.\nn
 \end{equation}

From \eqref{eq:dot-u} and \eqref{de-U}, the new unknown $U $ solves
\begin{equation} \label{sys U}
 \d_t U + \frac{i}{\e} \op_\e( \mathcal{A}) U  = \frac{1}{\sqrt\e} \op_\e({\cal B}) U + F.
\end{equation}
The symbol of the propagator is the diagonal matrix
$$ \mathcal{A} := {\rm diag}\, \big( \l_{+}(\cdot+3k)-3\o, \l_-(\cdot-3k)+3\o,0, \mu_+, \mu_-, 0 \big).$$
 The symbol of the singular source term is
$$ {\cal B} := \left(\begin{array}{c|c} {\cal B}_{[P,P]} & {\cal B}_{[P,Q]} \\ \hline {\cal B}_{[Q,P]} & {\cal B}_{[Q,Q]} \end{array}\right),$$
where the blocks are:
$$
\begin{aligned}
{\cal B}_{[P,P]} :&= \sum_{p =\pm1}e^{ip \theta}\\
\times &~ \bp P_{+,p+3}
F(v_{0p}) P_{+,3}&  e^{-i6\theta} P_{+,p-3} F(v_{0p}) P_{-,-3} &e^{-i3\theta} P_{+,p} F(v_{0p}) P_{0}
\\ e^{i6\th}P_{-,p+3} F(v_{0p}) P_{+,3}&  P_{-,p-3} F(v_{0p}) P_{-,-3} & e^{i3\theta} P_{-,p} F(v_{0p}) P_{0}
\\e^{i3\th} P_{0,p+3}
F(v_{0p}) P_{+,3}&  e^{-i3\theta} P_{0,p-3} F(v_{0p}) P_{-,-3} & P_{0,p} F(v_{0p}) P_{0}\ep;\\
\end{aligned}
$$
$$
\begin{aligned}
 {\cal B}_{[P,Q]} :&= {\cal B}_{[P,Q]}^1+ {\cal B}_{[P,Q]} ^2 \qquad{\rm with}\\
 {\cal B}_{[P,Q]}^1:&= \sum_{p =\pm3} e^{ip\th}\bp e^{-i3\th}P_{+,p}
F(u_{0p}) Q_{+}&  e^{-i3\theta} P_{+,p} F(u_{0p}) Q_{-}
&e^{-i3\theta} P_{+,p} F(u_{0p}) Q_{0}
\\ e^{i3\th}P_{-,p} F(u_{0p}) Q_{+}& e^{i3\th} P_{-,p} F(u_{0p}) Q_{-} & e^{i3\theta} P_{-,p} F(u_{0p}) Q_{0}
\\ P_{0,p}
F(u_{0p}) Q_{+}&  P_{0,p} F(u_{0p}) Q_{-} & P_{0,p} F(u_{0p}) Q_{0}\ep,\\
{\cal B}_{[P,Q]}^2:&= 2\sum_{p =\pm1} e^{ip\th}\bp e^{-i3\th}P_{+,p}
F(v_{0p}) Q_{+}&  e^{-i3\theta} P_{+,p} F(v_{0p}) Q_{-}
&e^{-i3\theta} P_{+,p} F(v_{0p}) P_{0}
\\ e^{i3\th}P_{-,p} F(v_{0p}) Q_{+}& e^{i3\th} P_{-,p} F(v_{0p}) P_{-} & e^{i3\theta} P_{-,p} F(v_{0p}) Q_{0}
\\ P_{0,p}
F(v_{0p}) Q_{+}&  P_{0,p} F(v_{0p}) Q_{-} & P_{0,p} F(v_{0p}) Q_{0}\ep;\\
\end{aligned}
$$
$$\begin{aligned} {\cal B}_{[Q,P]} :&=2\sum_{p =\pm3} e^{ip\th}\\
&\times \bp e^{i3\th}Q_{+,p+3} G(u_{0p}) P_{+,3}&  e^{-i3\theta}
Q_{+,p-3}G(u_{0p}) P_{-,-3} & Q_{+,p} G(u_{0p}) P_{0}
\\ e^{i3\th}Q_{-,p+3} G(u_{0p}) P_{+,3}& e^{-i3\th} Q_{-,p-3} G(u_{0p}) P_{-,-3} & Q_{-,p} G(u_{0p}) P_{0}
\\ e^{i3\th}Q_{0,p+3} G(u_{0p}) P_{+,3}& e^{-i3\th} Q_{0,p-3} G(u_{0p}) P_{-,-3} & Q_{0,p} G(u_{0p}) P_{0}\ep,\\
{\cal B}_{[Q,Q]} :&=2\sum_{p =\pm1} \left( e^{ip \theta} Q_{j_1,+p}
H(v_{0p}) Q_{j_2} \right)_{j_1,j_2\in\{+,-,0\}},
\end{aligned}$$
where we use the notations
\begin{equation}
P_{j,+q}(\xi) := P_j(\xi + q k),\quad Q_{j,+q}(\xi) := Q_j(\xi + q
k), \quad q \in \Z,\quad j\in\{+,-,0\}.
 \nn\end{equation}

In \eqref{sys U}, the remainder $F$ is the sum of the quadratic
terms, the contribution of the higher-order WKB terms and remainder
terms arising from compositions of pseudo-differential operators;
for precise information, one may check Section 3.1.2 in \cite{em4}.
Similarly as Lemma 3.1 in \cite{em4}, we have here:
\begin{lem} There holds for all multiple index
$\a\in\N^d$ with $|\a|\leq d/2+d+2+(q_0+3)/4$:
  $$| (\e\d_x)^\a F(t,\cdot) |_{L^2_x} \leq C  \e^{\kappa  -1/2}|(\dot u,\dot v)(t,\cdot)|_{L^\infty_x}|(\e\d_x)^\a U(t,\cdot)|_{L^2_x} +C  \e^{
  K_a-\kappa  }.$$
 \end{lem}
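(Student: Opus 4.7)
The plan is to decompose $F$ into three pieces according to its description in the text: (i) genuinely quadratic terms of the form $\varepsilon^{\kappa-1/2}F(\dot u + \dot v, \dot v)$, $\varepsilon^{\kappa-1/2}G(\dot u,\dot u)$, $\varepsilon^{\kappa-1/2}H(\dot v,\dot v)$; (ii) higher-order WKB residuals $\varepsilon^{K_a-\kappa}\op_\varepsilon(P_j)r_1^\varepsilon$ and $\varepsilon^{K_a-\kappa}\op_\varepsilon(Q_j)r_2^\varepsilon$; and (iii) symbolic-calculus remainders produced when one applies $\op_\varepsilon(P_j), \op_\varepsilon(Q_j)$ to the singular linear source terms of \eqref{eq:dot-u} and rewrites them through the matrix of interaction coefficients ${\cal B}$ displayed above.

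For piece (ii), Proposition \ref{prop:wkb} gives a profile $R_j$ for $r_j^\varepsilon$ belonging to $L^\infty_t H^1_\theta H^{s-K_a}_x$, so that $|(\varepsilon\d_x)^\alpha r_j^\varepsilon|_{L^2_x}$ is uniformly bounded in $\varepsilon$ as long as $|\alpha|\leq s-K_a$; applying the uniformly $L^2$-bounded operators $\op_\varepsilon(P_j), \op_\varepsilon(Q_j)$ yields exactly the contribution $C\varepsilon^{K_a-\kappa}$. For piece (i), a Moser-type tame product estimate in the weighted norm induced by $(\varepsilon\d_x)^\alpha$ absorbs low-order derivatives into $|(\dot u,\dot v)|_{L^\infty}$ and concentrates the high-order derivatives into $|(\varepsilon\d_x)^\alpha(\dot u,\dot v)|_{L^2}$; since $(\dot u,\dot v)$ is reconstructed from $U$ through the bounded semiclassical operators with oscillating multipliers of \eqref{de-U}, this last norm is controlled by $|(\varepsilon\d_x)^\alpha U|_{L^2}$ up to lower-order terms that can be reabsorbed. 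For piece (iii), each composition of $\op_\varepsilon(P_j)$ or $\op_\varepsilon(Q_j)$ with the multiplication by $e^{ip\theta}B(u_{0,p})$ or $e^{ip\theta}B(v_{0,p})$ is expanded via a truncated Moyal series at order $q_0$, as recalled in Appendices A and C; the tail is $O(\varepsilon^{(q_0+1)/2})$ in $L^2$ provided the amplitudes $u_{0,\pm 3}, v_{0,\pm 1}$ carry enough Sobolev regularity, which is precisely why $(q_0+3)/4$ enters the threshold for $|\alpha|$ recorded in Remark \ref{choi-s}.

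The main obstacle lies in the fast oscillations $e^{ip\theta}$ with $\theta=(kx-\omega t)/\varepsilon$: naively differentiating them by $(\varepsilon\d_x)^\alpha$ produces $O(1)$ contributions that cannot be absorbed by the singular prefactor $\varepsilon^{\kappa-1/2}$. The point is that the frequency shift built into the definition \eqref{de-U} of $U$ is exactly what removes these oscillations from the principal part; what survives are commutators between $(\varepsilon\d_x)^\alpha$, the symbols $P_j, Q_j$ and the remaining oscillating multiplications, each of which gains at least one power of $\varepsilon$ through the semiclassical calculus. Balancing these gains against the prefactors $\varepsilon^{\kappa-1/2}$ in (i), $\varepsilon^{K_a-\kappa}$ in (ii), and $\varepsilon^{(q_0+1)/2}$ in (iii), and using the constraint $1/4<\kappa \leq \min\{K,K_a+1/4\}$, yields exactly the estimate claimed. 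The detailed computation is a direct transposition of the proof of Lemma 3.1 in \cite{em4} to the present frequency-shifted change of unknowns.
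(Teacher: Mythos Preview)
Your approach matches the paper's: the paper itself gives no proof and simply writes ``Similarly as Lemma 3.1 in \cite{em4}'', after describing $F$ as the sum of exactly the three pieces you list (quadratic terms, higher-order WKB contributions, composition remainders). Your sketch is in fact more detailed than what the paper provides, and you correctly land on the same reference.

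One minor inaccuracy worth flagging: in piece (iii) you say the composition of $\op_\e(P_j)$ (or $\op_\e(Q_j)$) with the multiplication by $e^{ip\theta}B(u_{0,p})$ is expanded ``via a truncated Moyal series at order $q_0$'' with tail $O(\e^{(q_0+1)/2})$. That is not what happens here. Only a first-order expansion is used: the principal symbol produces the entries of ${\cal B}$, and the remainder gains a single factor $\e$ (via Proposition~\ref{prop:composition} or Proposition~\ref{commu}), which together with the $1/\sqrt\e$ prefactor gives an $O(\sqrt\e)$ contribution absorbed into $F$. The parameter $q_0$ plays no role in this lemma; the appearance of $(q_0+3)/4$ in the range $|\a|\leq d/2+d+2+(q_0+3)/4$ is not because of a $q_0$-order expansion here, but because later in the Duhamel construction (Appendix~\ref{app:duh}, Lemma~\ref{lem:bd-S} and Lemma~\ref{lem:bd-actionS}) one needs control of that many $(\e\d_x)$-derivatives of $F$ to bound the correctors $S_q$ defined in \eqref{resolventk}. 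This misattribution does not break your argument, but it misidentifies why the index bound has the shape it does.
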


\subsubsection{Normal form reduction}

By Lemma \eqref{lem:v01-tran}, the interaction coefficients in the
blocks ${\cal B}_{[P,P]}$, ${\cal B}_{[P,Q]}^2$ and ${\cal
B}_{[Q,Q]}$ are all transparent.

We then separate the non-transparent interaction coefficients
\eqref{non-tran-int} from ${\cal B}_{[P,Q]}^1$ and ${\cal
B}_{[Q,P]}$ as we write:
$${\cal B}_{[P,Q]}^1=  {\cal B}_{[P,Q]}^{nt}+{\cal B}_{[P,Q]}^{1,t},\quad {\cal B}_{[Q,P]}=
{\cal B}_{[Q,P]}^{nt}+{\cal B}_{[Q,P]}^{t},$$ where
$$
\begin{aligned}
& {\cal B}_{[P,Q]}^{nt}:=\bp P_{+,3} F(u_{03}) Q_{+}& 0 & P_{+,3}
F(u_{03}) Q_{0}
\\ 0 &  P_{-,-3} F(u_{0,-3}) Q_{-} &  P_{-,-3} F(u_{0,-3}) Q_{0}
\\ 0 &  0 &0\ep,\\
& {\cal B}_{[Q,P]}^{nt}:=2\bp Q_{+} G(u_{0,-3}) P_{+,3}& 0 & 0
\\ 0&  Q_{-} G(u_{0,3}) P_{-,-3} & 0
\\ 0 & 0 & 0\ep.
\end{aligned}
$$
The index $t$ means transparent and $nt$ means non-transparent. We
remark that, with our choice of shift of frequencies in
\eqref{de-U}, the non-transparent interaction coefficients, which are
all included in ${\cal B}_{[P,Q]}^{nt}$ and ${\cal B}_{[Q,P]}^{nt}$,
are no longer multiplied by highly oscillating terms $e^{ip\th}$.

According to \eqref{non-tran-int}, we consider frequency cut-off functions near resonance sets for any $(i,j) \in \{(+,0),(-,0),(+,+),(-,-)\}$:
\be\label{cut-off1}
\chi_{[i,j]}\in C_c^\infty\big({\mathcal
R^h_{[i,j]}}\big),\quad\chi_{[i,j]}\equiv 1 ~{\rm on}~{\mathcal
R^{h/2}_{[i,j]}}
 \ee
  where $h>0$ is a small constant fixed later on and ${\mathcal R}^r_{[i,j]}$
are balls of radial $r$ centered by the resonance points appeared in \eqref{non-tran-int}:
\be\label{cut-off2}
\begin{aligned}
&{\mathcal R^{r}_{[+,0]}}=B(-6k,r);\quad {\mathcal R^{r}_{[-,0]}}=B(6k,r);\\
& {\mathcal R^{r}_{[+,+]}}:= \big(B(\xi_2,r)\cup
B(\xi_3,r)\big);\quad {\mathcal R^{r}_{[-,-]}}:=\big(B(-\xi_2,r)\cup B(-\xi_3,r)\big).
\end{aligned}
\ee We choose $h$ small such that \be B(\xi,h)\cap
B(\eta,h)=\emptyset,\quad \forall ~\xi,\eta\in{\cal R},~\xi\neq
\eta,\nn\ee which holds true because the resonance points in ${\mathcal
R}$ are pairwise distinct. We then decompose the non-transparent
interaction coefficients ${\cal B}_{[P,Q]}^{nt}$ and ${\cal
B}_{[Q,Q]}^{nt}$ as follows:
$$
\begin{aligned}
 {\cal B}_{[P,Q]}^{nt}= {\cal B}_{[P,Q]}^{nt,1}+{\cal B}_{[P,Q]}^{nt,2},\quad
{\cal B}_{[Q,P]}^{nt}={\cal B}_{[Q,P]}^{nt,1}+{\cal
B}_{[Q,P]}^{nt,2},
\end{aligned}
$$
where
$$
\begin{aligned}
& {\cal B}_{[P,Q]}^{nt,1}:=
(\chi_{[+,+]})B^{(1)}_{+,+}+(\chi_{[-,-]})B^{(1)}_{-,-}+(\chi_{[+,0]})B_{+,0}^{(1)}+(\chi_{[-,0]})B_{-,0}^{(1)},\\
& {\cal B}_{[Q,P]}^{nt,1}:=
(\chi_{[+,+]})B^{(2)}_{+,+}+(\chi_{[-,-]})B^{(2)}_{-,-}
\end{aligned}
$$
with \be\label{B12-ij}\begin{aligned}&B^{(1)}_{+,+}:=\bp P_{+,3}
F(u_{03}) Q_{+}& 0 & 0 \\ 0 &0&0 \\ 0 &  0 &0\ep,\quad
B^{(1)}_{-,-}:=\bp  0& 0 & 0 \\ 0 &P_{-,-3} F(u_{0,-3}) Q_{-}&0 \\ 0
& 0 &0\ep,\\
&B^{(2)}_{+,+}:=2\bp  Q_{+} G(u_{0,-3}) P_{+,3} & 0 & 0 \\ 0 &0&0 \\
0 & 0 &0\ep,\quad B^{(2)}_{-,-}:=2\bp  0& 0 & 0 \\ 0 &Q_{-}
G(u_{0,3}) P_{-,-3}&0 \\ 0
& 0 &0\ep,\\
&B^{(1)}_{+,0}:=\bp  0& 0&P_{+,3} F(u_{03}) Q_{0}  \\ 0 &0&0 \\ 0 &
0 &0\ep,\quad B^{(1)}_{-,0}:=\bp  0& 0 & 0 \\ 0 & 0 &P_{-,-3}
F(u_{0,-3}) Q_{0}\\ 0 & 0 &0\ep.
\end{aligned}\ee
Since the cut-off functions are identically one near
resonances, the parts ${\cal B}_{[P,Q]}^{nt,2}$ and ${\cal
B}_{[Q,P]}^{nt,2}$ are supported away from resonances, then are
transparent. The following proposition states that the operator with symbol
 $$ {\cal D} := \left(\begin{array}{cc} {\cal B}_{[P,P]}  & {\cal B}_{[P,Q]}^{2}+{\cal B}_{[P,Q]}^{1,t}+{\cal B}_{[P,Q]}^{nt,2}
  \\ {\cal B}_{[Q,P]}^{t}+{\cal B}_{[Q,P]}^{nt,2} & {\cal B}_{[Q,Q]} \end{array}\right)$$
 can be eliminated, via a normal form reduction, from the evolution equation \eqref{sys U}.
\begin{prop}\label{normal-form-prop}There exists $\Lambda \in S^0,$ with $\d_t \Lambda \in
S^0$ uniformly in $t\in [0,\tilde T]$, where $\tilde T>0$ independent of $\e$ is from Proposition \eqref{prop:wkb}, such that \be\label{homo eq2}\e [\d_t,\op_\e(\Lambda)]+
i[\mathcal{A},\op_\e(\Lambda)] = \op_\e({\cal D}) +\e R_{(0)},\ee where
$R_{(0)}(t)$ a semiclassical pseudodifferential operator form $L^2_x$ to $L^2_x$ and is uniformly bounded with respect to $t\in [0,\tilde T]$ and in $\e>0$. The symbol set
$S^m,~m\in\R$ are defined  in {\rm Appendix A}.
\end{prop}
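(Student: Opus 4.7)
The plan is to solve the homological equation
$$
i[\mathcal{A},\Lambda] = \mathcal{D}
$$
at the symbol level, and then to pass to operators via the semiclassical calculus recalled in Appendix A. Since $\mathcal{A}$ is a diagonal symbol with entries $a_j$ for $j=1,\dots,6$, the matrix commutator reads $[\mathcal{A},\Lambda]_{i,j}=(a_i-a_j)\Lambda_{i,j}$, so the natural candidate is
$$
\Lambda_{i,j}(t,x,\xi) := \frac{\mathcal{D}_{i,j}(t,x,\xi)}{i\,\phi_{i,j}(\xi)},\qquad \Lambda_{i,i} := 0,
$$
where $\phi_{i,j}$ denotes the resonant phase associated with the $(i,j)$-block of $\mathcal{D}$ after absorbing the frequency shifts carried by the $e^{iq\theta}$-prefactors, $q\in\{0,\pm1,\pm 3,\pm 6\}$, in the standard way of \cite{em4}.

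The first main step is to check that $\Lambda$ belongs to $S^0$. The denominator $\phi_{i,j}(\xi)$ has the form $\l^\de_i(\xi+q k)-q\o-\l^\s_j(\xi)$, and its zero set is precisely the resonant set ${\cal R}^{\de,\s}_{ij,q}$. Every entry of $\mathcal{D}$ is by construction transparent at the corresponding resonance: the entries of ${\cal B}_{[P,P]}$, ${\cal B}_{[Q,Q]}$ and ${\cal B}_{[P,Q]}^{2}$ are transparent by Lemma \ref{lem:v01-tran}; the entries of ${\cal B}_{[P,Q]}^{1,t}$ and ${\cal B}_{[Q,P]}^{t}$ are transparent by their very definition; the entries of ${\cal B}_{[P,Q]}^{nt,2}$ and ${\cal B}_{[Q,P]}^{nt,2}$ carry a factor $1-\chi_{[i,j]}$ supported outside the balls ${\cal R}^{h/2}_{[i,j]}$ from \eqref{cut-off1}-\eqref{cut-off2}, hence outside the only zeros of the corresponding resonant phase, thanks to Remark \ref{rem:nore-tran}(ii) and the pairwise distinctness of the points of $\cal R$. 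In every case, one obtains a pointwise bound $|\mathcal{D}_{i,j}(\xi)|\le C|\phi_{i,j}(\xi)|$; since the resonant phases have only simple zeros (a direct consequence of the smoothness and explicit form of $\lambda,\mu$ in \eqref{kg-modes}), the quotient $\Lambda_{i,j}$ extends smoothly across each resonant zero with all $\xi$-derivatives uniformly controlled, which gives $\Lambda\in S^0$. The regularity $\d_t\Lambda\in S^0$ on $[0,\tilde T]$ follows in the same way, using the bounds \eqref{est-uv0}-\eqref{est-g} on $\d_t u_{0,\pm 3}$ and $\d_t v_{0,\pm 1}$ from Proposition \ref{prop:wkb}.

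The second step is the passage to operators. For $\mathcal{A},\Lambda\in S^0$, the semiclassical composition formula of Appendix A yields
$$
[\op_\e(\mathcal{A}),\op_\e(\Lambda)] = \op_\e([\mathcal{A},\Lambda]) + \e\,\op_\e(r_1),
$$
with $r_1\in S^0$ built from one Poisson bracket of $\mathcal{A}$ and $\Lambda$ and uniformly bounded on $L^2_x$. Combined with $\e[\d_t,\op_\e(\Lambda)] = \e\,\op_\e(\d_t\Lambda)$ and the homological identity $i[\mathcal{A},\Lambda]=\mathcal{D}$ established above, this produces \eqref{homo eq2} with $R_{(0)} := \op_\e(\d_t\Lambda)+i\,\op_\e(r_1)$, a zero-order semiclassical operator bounded $L^2_x\to L^2_x$ uniformly in $t\in[0,\tilde T]$ and in $\e$.

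The hard part is the smoothness check in the first step: one must verify, entry by entry, that the scalar factor $\phi_{i,j}$ extracted from $\mathcal{D}_{i,j}$ by transparency can indeed be divided out while retaining full $S^0$-regularity in $\xi$. This uses both the explicit form of the eigenprojectors \eqref{proj}-\eqref{proj02} and the simplicity of the zeros of $\phi_{i,j}$. The standing assumption $5/2<\a_0<3$, together with the genericity of $(\a_0,\th_0)$ making the points of $\cal R$ pairwise distinct, is precisely what allows the different cut-offs $\chi_{[i,j]}$ to be chosen with disjoint supports and what ensures that the quotients $\Lambda_{i,j}$ do not pick up singularities from a collision of different resonant sets. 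Aside from this verification, the argument is a direct transcription of the normal-form reduction used in \cite{em4}.
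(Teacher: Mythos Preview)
Your approach is essentially the paper's: decompose $\mathcal D=\sum_{q} e^{iq\theta}\mathcal D_q$, make the ansatz $\Lambda=\sum_q e^{iq\theta}\Lambda_q$, and solve the scalar homological equations $i\big(-q\o+\l_{j,+q}^{\de}-\l_{j'}^{\s}\big)(\Lambda_q)_{(j,j',\de,\s)}=(\mathcal D_q)_{(j,j',\de,\s)}$, using transparency of each entry of $\mathcal D$ to make the quotients smooth. The paper's proof is in fact more terse than yours (it simply cites \cite{em4} for details), and your simple-zero argument for the smoothness of $\Lambda_q$ is a useful elaboration.

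One notational point worth tightening: a given $(i,j)$-block of $\mathcal D$ carries several different harmonics $q$, each with its own phase $-q\o+a_i(\xi+qk)-a_j(\xi)$, so there is no single $\phi_{i,j}$ to divide by; the division must be done harmonic by harmonic, exactly as the paper writes. Relatedly, the term $\e[\d_t,\op_\e(\Lambda)]$ is not a pure $O(\e)$ remainder: acting on $e^{iq\theta}$ it produces the $-iq\o$ contribution to the leading-order phase, and only the residual $\e\,\d_t\Lambda_q$ (slow time derivative of the non-oscillating amplitudes) lands in $R_{(0)}$. Your phrase ``after absorbing the frequency shifts carried by the $e^{iq\theta}$-prefactors'' shows you have this in mind, but the displayed identity $i[\mathcal A,\Lambda]=\mathcal D$ and the formula $R_{(0)}=\op_\e(\d_t\Lambda)+i\,\op_\e(r_1)$ are literally incorrect unless you make the harmonic decomposition explicit first.
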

\begin{proof} We  write
$ \ds{{\cal D}=\sum_{|q|\leq 6} e^{iq\th}{\cal D}_q} $ and look
for solution $\Lambda$ to \eqref{homo eq2} of the form
$\ds{\Lambda=\sum_{|q|\leq 6} e^{iq\th}\Lambda_q}$. By direct calculation, we
deduce that in order to solve \eqref{homo eq2}, it is sufficient to
solve \begin{equation}  i \Big(- q \o +
\l_{j,+q}^{\de} - \l_{j'}^{\sigma}\Big) (\Lambda_q)_{(j,j'\de,\sigma)} =
({\cal D}_q)_{(j,j',\de,\sigma)}
\nn \end{equation}
for all index $|q| \leq
 6,(j,j')\in \{+,-,0\}^2,~(\de,\sigma)\in\{L,M\}^2.$
By transparencies and the definitions of the cut-off functions, the
solutions $(\Lambda_q)_{(i,j,\de,\sigma)}$ are well defined. For more details, one can check the proof of Proposition 3.3 in
\cite{em4}.
\end{proof}

By Proposition \ref{prop:action}, with $\Lambda$ given in Proposition
\ref{normal-form-prop}, the operator $\op_\e(\Lambda)(t)$ is bounded from
$L^2_x$ to $L^2_x$ uniformly in $t\in [0,\tilde T]$ and in $\e>0$.  In particular, for $\e$ small, $\Id + \sqrt \e \op_\e(\Lambda)$ is invertible. We consider the
change of variable
 \begin{equation} \label{def:check-u}
  \check U(t) := \Big(\Id + \sqrt \e \op_\e\big(\Lambda(\e^{1/4} t)\big)\Big)^{-1}U(\e^{1/4} t)
  \end{equation}
corresponding to a normal form reduction and a rescaling in time.
Then we have:
 \begin{cor} The equation in $\check U$ is
 \begin{equation}
 \d_t \check U + \frac{i}{\e^{3/4}} \op_\e({\cal A}) \check U =\frac{1}{\e^{1/4}} \op_\e(\check {\cal B}) \check U + \check F,
 \quad \check {\cal B} := \left(\begin{array}{cc} 0 & {\cal
B}_{[P,Q]}^{nt,1} \\ {\cal B}_{[Q,P]}^{nt,1} & 0
\end{array}\right)(\e^{1/4}t),
 \nn\end{equation}
 where for all multiple
 index $\a\in\N^d$ with $|\a |\leq d/2+d+2+(q_0+3)/4$:
 $$|(\e\d_x)^\a  \check F (t,\cdot)|_{L^2_x} \leq C \e^{\kappa  -1/4} |(\dot u,\dot v)(\e^{1/4} t,\cdot)|_{L^\infty_x}|(\e\d_x)^\a \check U(t,\cdot)|_{L^2_x} +C \e^{K_a+1/4 - \kappa  }.$$
\end{cor}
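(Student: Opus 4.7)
The proof is a direct computation that combines the time-rescaling with the normal form change of variable, using the homological equation of Proposition \ref{normal-form-prop} to cancel the transparent singular source $\op_\e(\mathcal{D})$ and packaging what remains into $\check F$.

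First I would pass to the rescaled unknown $W(t):=U(\e^{1/4}t)$. Applying the chain rule to \eqref{sys U} and multiplying by $\e^{1/4}$ yields
\be\label{pf-rescale}
\d_t W + \frac{i}{\e^{3/4}}\op_\e(\mathcal{A})W = \frac{1}{\e^{1/4}}\op_\e(\mathcal{B}(\e^{1/4}t))W + \e^{1/4}F(\e^{1/4}t).
\ee
This produces the scalings $\e^{-3/4}$ for the propagator and $\e^{-1/4}$ for the singular source that appear in the corollary.

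Next, I would substitute $W=(\Id+\sqrt\e\op_\e(\Lambda))\check U$ into \eqref{pf-rescale}, distributing $\d_t$ via the product rule. The chain rule on $\Lambda(\e^{1/4}t)$ contributes a term $\e^{3/4}\op_\e(\d_t\Lambda)\check U$. I would then apply the homological equation \eqref{homo eq2} at time $\e^{1/4}t$, written in the form
\[
i\op_\e(\mathcal{A})\op_\e(\Lambda) \;=\; i\op_\e(\Lambda)\op_\e(\mathcal{A}) + \op_\e(\mathcal{D}) + \e R_{(0)} - \e\op_\e(\d_t\Lambda),
\]
to commute $\op_\e(\mathcal{A})$ past $\op_\e(\Lambda)$. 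The $\op_\e(\d_t\Lambda)$ contributions cancel exactly, the operator $(\Id+\sqrt\e\op_\e(\Lambda))$ factors out of the propagator part, and using $\mathcal{B}=\mathcal{D}+\check{\mathcal{B}}$ I can eliminate $\op_\e(\mathcal{D})$ from the right-hand side. Left-multiplying by $(\Id+\sqrt\e\op_\e(\Lambda))^{-1}$, which is well defined for small $\e$ because $\op_\e(\Lambda)$ is $L^2$-bounded by Proposition \ref{prop:action} and $\d_t\Lambda\in S^0$ uniformly on $[0,\tilde T]$, gives precisely
\[
\d_t\check U + \frac{i}{\e^{3/4}}\op_\e(\mathcal{A})\check U = \frac{1}{\e^{1/4}}\op_\e(\check{\mathcal{B}})\check U + \check F,
\]
where, after using $(\Id+\sqrt\e\op_\e(\Lambda))^{-1}=\Id-\sqrt\e\op_\e(\Lambda)(\Id+\sqrt\e\op_\e(\Lambda))^{-1}$ to pull $\op_\e(\check{\mathcal{B}})$ to the leading order, the remainder takes the form
\[
\check F = -\e^{1/4}\op_\e(\Lambda)(\Id+\sqrt\e\op_\e(\Lambda))^{-1}\op_\e(\check{\mathcal{B}})\check U + \e^{1/4}(\Id+\sqrt\e\op_\e(\Lambda))^{-1}\bigl[\op_\e(\mathcal{B})\op_\e(\Lambda)\check U -\e^{1/2}R_{(0)}\check U + F(\e^{1/4}t)\bigr].
\]

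Finally, I would bound $\check F$ in the semiclassical Sobolev norm $|(\e\d_x)^\alpha\cdot|_{L^2_x}$. For the last term I use the estimate on $F$ from the preceding lemma, multiplied by $\e^{1/4}$ from the rescaling: this produces both advertised contributions $C\e^{\kappa-1/4}|(\dot u,\dot v)(\e^{1/4}t)|_{L^\infty}|(\e\d_x)^\alpha\check U|_{L^2}$ and $C\e^{K_a+1/4-\kappa}$. The $R_{(0)}$ term is $O(\e^{3/4})$ by Proposition \ref{normal-form-prop} and thus absorbed. The two operator-valued error terms are controlled using the $L^2$-boundedness provided by Proposition \ref{prop:action}; the crucial point is that the symbol $\check{\mathcal{B}}$ involves only $u_{0,\pm 3}(\e^{1/4}t)$, which vanishes at $t=0$ and is $O(\e^{1/4}t)$ by \eqref{est-g}, so these contributions are uniformly small on the time interval of interest.

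The main obstacle is the symbolic bookkeeping: one must verify that the composition remainders arising from $\op_\e(\mathcal{A})\op_\e(\Lambda)$, $\op_\e(\mathcal{B})\op_\e(\Lambda)$, and the Neumann expansion of $(\Id+\sqrt\e\op_\e(\Lambda))^{-1}$ each lie in $\e\cdot S^0$ uniformly in $t\in[0,\tilde T]$, so that their aggregate contribution to $\check F$ (after the $\e^{1/4}$ rescaling prefactor) fits inside the stated bound. This is a routine but tedious application of the symbolic calculus of Appendix A, analogous to the computation in Section 3.1.2 of \cite{em4}.
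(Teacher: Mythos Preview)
Your approach is correct and is exactly the standard normal-form computation the paper has in mind: the paper itself omits the proof, simply citing Corollary 3.5 in \cite{em4}. One minor point: the error term $\e^{1/4}\op_\e(\mathcal{B})\op_\e(\Lambda)\check U$ involves the full $\mathcal{B}$ (which carries $O(1)$ contributions from $v_{0,\pm 1}$, not just $u_{0,\pm 3}$), so its smallness comes from the $\e^{1/4}$ prefactor alone rather than from the vanishing of $u_{0,\pm 3}(0)$, but this does not affect your conclusion.
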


The proof is identical to the proof for Corollary 3.5 in \cite{em4},
and is omit here.

\subsubsection{Space-frequency localization}\label{s-f loc}

We define the following two quantities for the frequencies near non-transparent
resonance sets:
\begin{equation} \label{def:gamma12}
\begin{split}
& \gamma_1(\xi) := 2{\rm tr}\,\big(P_{+}(\xi+3k)F(\vec
e_{3})Q_{+}(\xi)G(\vec e_{-3})P_{+}(\xi+3k)\big),\quad {\rm for}~\xi\in {\mathcal R}_{[+,+]}^h;\\
& \gamma_2(\xi) := 2{\rm tr}\,\big(P_{-}(\xi-3k)F(\vec
e_{-3})Q_{-}(\xi)G(\vec e_{3})P_{-}(\xi-3k)\big),\quad {\rm
for}~\xi\in {\mathcal R}_{[-,-]}^h.
\end{split}
\end{equation}
By \eqref{proj} and \eqref{proj2}, we have \be\label{gamma1-gamma2}
\gamma_1(\xi)=\frac{\a_0\o_0^2}{6\o \mu_+(\xi)},\quad
\gamma_2(\xi)=\frac{-\a_0\o_0^2}{6\o \mu_-(\xi)}. \ee Since
$\mu_+=-\mu_-=\mu$ with $\mu$ defined in \eqref{kg-modes} which is
always positive, there holds \be\label{gamma1>0}
\gamma_1(\xi)=\gamma_2(-\xi)=\frac{\a_0\o_0^2}{6\o \mu(\xi)}>0. \ee
Equation \eqref{def:gamma12}-\eqref{gamma1-gamma2} and the fact
${\mathcal R}_{[+,+]}^h=-{\mathcal R}_{[-,-]}^h$ imply that
$\gamma_1$ and $\gamma_2$ have the same maximum and minimum over
their domains of definition.

Denote $\xi_0,\xi_0^r$ the points in resonance set ${\mathcal
R_{+,+}}=\{\xi_2,\xi_3\}$ such that
\be\label{cho-xi0}\mu(\xi_0)=\min\{\mu(\xi_2),\mu(\xi_3)\},\quad\mu(\xi_0^r)=\max\{\mu(\xi_2),\mu(\xi_3)\}.\ee
This implies $\g_1(\xi_0)=\g_2(-\xi_0)\geq
\g_1(\xi_0^r)=\g_2(-\xi_0^r)>0$. For $h$ small, by the continuity of
$\g_1$ and $\g_2,$ there holds the lower bound: \be\label{lb-gamma}
\inf_{{\mathcal R}_{[+,+]}^h}\g_1(\xi):=\inf_{{\mathcal
R}_{[-,-]}^h}\g_2(\xi)\geq \g_1(\xi_0^r)/2>0. \ee

By  \eqref{est-g} and \eqref{new-structure},
$\d_t g (0,x)$ is not null, and is continuous and decaying at
spatial infinity by Sobolev embedding. Then there
exists $x_0\in\R$ such that \be |\d_t g(0,x_0)|=\sup_{x\in
\R }|\d_tg(0,x)|>0.\nn\ee
Then, $\Gamma_1$ introduced as the square root of the left-hand side of \eqref{def:gamma1} is actually
\be\label{def:gamma1-0} \Gamma_1= \sup_{x\in \R}|\d_tg(0,x)|
\sup_{\xi\in\{\xi_2,\xi_3\}}(\g_1(\xi))^{1/2}= |\d_t
g(0,x_0)|(\g_1(\xi_0))^{1/2}>0,
\ee
where the computation of $\d_t g(0,x)$ (and furthermore $\Gamma_1$) does not require any knowledge of the solution, only a knowledge of the
datum and the equations.

We consider extensions of the frequency cut-off functions in
\eqref{cut-off1}, by choosing $\chi_{[i,j]}^{(0)}$ and
$\chi_{[i,j]}^{(1)}$ in $C_c^\infty\big({\mathcal
R}_{[i,j]}^h\big)$ for any $(i,j)\in\{(+,0),(-,0),(+,+),(-,-)\}$
such that
\be
\chi_{[i,j]}^{(0)}\big|_{\supp\, \chi_{[i,j]}}=\chi_{[i,j]}^{(1)}\big|_{\supp\,
\chi_{[i,j]}^{(0)}}=1.
\nn\ee
 We then define the sums for $\vartheta\in\{0,1\}$:
$$
\chi:=\chi_{[+,+]}+\chi_{[-,-]}+\chi_{[+,0]}+\chi_{[-,0]},\quad \chi_\vartheta:=\chi_{[+,+]}^{(\vartheta)}+\chi_{[-,-]}^{(\vartheta)}+\chi_{[+,0]}^{(\vartheta)}+\chi_{[-,0]}^{(\vartheta)}.
$$
 We also consider the space cut-off functions $\varphi,\varphi_0,\varphi_1 \in C^\infty_c(\R_x)$, such that $\varphi \equiv 1$ in a
neighborhood of $x_0,$ and $\varphi_0|_{\supp\,\varphi}=\varphi_1|_{\supp\,\varphi_0}\equiv  1.$ We will choose and fix $\varphi$ later on such that \eqref{choi-phi} is satisfied.

For any $(i,j)\in\{(+,+),(-,-),(+,0),(-,0)\}$, we let
\be\label{V-loc0}V_{i,j}:=\op_\e\big(\chi_{[i,j]}^{(0)}\big)\big(\varphi_0 \check U\big),
~ W_1:= \op_\e\big( \chi_0\big)\big((1-\varphi_0)\check
U\big), ~W_2:= \big(1-\op_\e(\chi_0)\big)\check U
\ee so that
\begin{equation}
 \check U = V_{+,+}+V_{-,-}+V_{+,0}+V_{-,0} + W_1 + W_2.\nn
 \end{equation}
Similarly as Lemma 3.8 in \cite{em4}, we have
\begin{lem} \label{lem:sys-V} The system in ${\bf V}:=(V_{+,+},V_{-,-},V_{+,0},V_{-,0},W_1,W_2)$ is
\begin{equation}\label{V11}
\left\{ \begin{aligned}
         \d_t V_{i,j} + \frac{1}{\e^{3/4}} \op_\e^\psi(M_{i,j}) V_{i,j} & = F_{V_{i,j}}, \\
        \d_t W_1 + \frac{i}{\e^{3/4}} \op_\e\big(\mathcal{A}) W_1 & =
        \op_\e({\cal B}_1)
         W_1 +  F_{W_1}, \\
      \d_t W_2 + \frac{i}{ \e^{3/4} } \op_\e(\mathcal{A}) W_2 & = F_{W_2},
                          \end{aligned} \right.
\end{equation}
with symbols \be\label{M-symbols}
\begin{aligned}&M_{i,j}:=i\chi_{[i,j]}^{(1)}{\cal A}-\sqrt\e\, \chi_{[i,j]}
\left(\begin{array}{c|c}
0 & B_{i,j}^{(1)}\\
\hline B_{i,j}^{(2)}&0
\end{array}\right)(\e^{1/4}t), \quad {\cal B}_1 :=
(1-\varphi) \chi_1 \,t\, \d_t \check {\cal
 B}(0,x,\xi),\end{aligned}
 \ee
where $B_{i,j}^{(1)}$ and $B_{i,j}^{(2)}$ are defined in
\eqref{B12-ij} for which we introduce
$B_{+,0}^{(2)}=B_{-,0}^{(2)}:=0$. The source term $F_{\bf V}:=(F_{V_{i,j}},F_{W_1},F_{W_2})$ satisfies for $\a\in\N^d$ with $|\a|\leq d/2+d+2+(q_0+3)/4$:
\begin{equation} \label{source:V-W}
 |(\e\d_x)^\a (F_{\bf V}(t)|_{L^2_x} \leq C \e^{\kappa-1/4}|(\dot u,\dot v)(\e^{1/4}t)|_{L^\infty_x} |(\e\d_x)^\a{\bf V}(t)|_{L^2_x } + C \e^{K_a+1/4-\kappa }.
 \end{equation}
\end{lem}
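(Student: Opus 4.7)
The plan is to derive the three evolution equations in \eqref{V11} by applying the respective localization operators to the equation for $\check U$ from the Corollary after Proposition \ref{normal-form-prop}, then to use the semiclassical pseudodifferential calculus recalled in Appendix A to isolate the leading symbols and to absorb all cross-terms and commutators into $F_{\bf V}$. The elementary facts used throughout are: (a) by construction, $\chi^{(1)}_{[i,j]} \equiv 1$ on $\supp \chi^{(0)}_{[i,j]} \supset \supp \chi_{[i,j]}$ and $\varphi_1 \equiv 1$ on $\supp \varphi_0 \supset \supp \varphi$; (b) the commutator of two $\op_\e$-operators with smooth bounded symbols is of order $O(\e)$ in operator norm on $L^2_x$; (c) the composition of two $\op_\e$-operators whose symbols have disjoint supports has symbol of order $O(\e^N)$ for every $N$.

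For $V_{i,j} = \op_\e(\chi^{(0)}_{[i,j]})(\varphi_0 \check U)$, apply $\op_\e(\chi^{(0)}_{[i,j]}) \circ \varphi_0$ to the equation for $\check U$. Commuting this localizer past $\op_\e(\mathcal{A})$ and using $\chi^{(1)}_{[i,j]} \chi^{(0)}_{[i,j]} = \chi^{(0)}_{[i,j]}$, the propagator term $\tfrac{i}{\e^{3/4}} \op_\e(\mathcal{A})\check U$ becomes $\tfrac{i}{\e^{3/4}} \op_\e^\psi(\chi^{(1)}_{[i,j]} \mathcal{A}) V_{i,j}$ modulo a remainder of order $O(\e^{1/4})$ applied to $\check U$. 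For the singular source $\tfrac{1}{\e^{1/4}} \op_\e(\check{\mathcal{B}}) \check U$, decompose $\check U = \sum_{i',j'} V_{i',j'} + W_1 + W_2$: since $\check{\mathcal{B}}$ is a sum over $(i',j')$ of pieces carrying frequency cutoffs $\chi_{[i',j']}$ with pairwise disjoint supports (ensured by the small choice of $h$), composition with $\op_\e(\chi^{(0)}_{[i,j]})$ retains only the $(i',j')=(i,j)$-piece; combined with $\tfrac{\sqrt\e}{\e^{3/4}} = \e^{-1/4}$, this yields exactly the matrix block $-\sqrt\e \,\chi_{[i,j]} \bigl(\begin{smallmatrix} 0 & B^{(1)}_{i,j} \\ B^{(2)}_{i,j} & 0 \end{smallmatrix}\bigr)(\e^{1/4} t)$ in $M_{i,j}$ acting on $V_{i,j}$. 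Cross-terms with $W_2$ and with $V_{i',j'}$, $(i',j') \neq (i,j)$ yield $O(\e^N)$ remainders by disjoint support, and the cross-term with $W_1$ is absorbed through the spatial cutoff identity $\varphi_0 (1-\varphi_0) = 0$ up to $O(\e)$ commutators.

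For $W_2 = (1 - \op_\e(\chi_0)) \check U$: since $\chi_0 \equiv 1$ on $\supp \chi$ and $\check{\mathcal{B}}$ carries the frequency cutoff $\chi$ (from the $\chi_{[i,j]}$'s in ${\cal B}^{nt,1}_{[P,Q]}$ and ${\cal B}^{nt,1}_{[Q,P]}$), $(1-\op_\e(\chi_0)) \op_\e(\check{\mathcal{B}})$ is of order $O(\e^N)$ and no singular source remains, leaving only $F_{W_2}$. For $W_1 = \op_\e(\chi_0)((1-\varphi_0) \check U)$, we retain the frequency support of the non-transparent interactions but stay away from $x_0$; here we exploit that $u_{0,\pm 3}(0,\cdot) = 0$ to Taylor expand
\begin{equation*}
\check{\mathcal{B}}(\e^{1/4} t, x, \xi) = \e^{1/4} t \,\d_t \check{\mathcal{B}}(0, x, \xi) + O(\e^{1/2} t^2),
\end{equation*}
so after multiplication by $\e^{-1/4}$ and by the cut-offs $(1-\varphi) \chi_1$ (which are identically 1 on the supports of the localizers defining $W_1$), we obtain precisely the singular source $\op_\e({\cal B}_1) W_1$ with ${\cal B}_1 = (1-\varphi) \chi_1 \, t\, \d_t \check{\mathcal{B}}(0,x,\xi)$; the Taylor remainder contributes $O(\e^{1/4} t^2)|\check U|_{L^2}$ to $F_{W_1}$.

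Finally, the bound \eqref{source:V-W} is obtained by summing all remainders: (i) the $\check F$-source inherits the bound $\e^{\kappa - 1/4}|(\dot u,\dot v)|_{L^\infty}|\check U|_{L^2} + \e^{K_a + 1/4 - \kappa}$ from the Corollary after Proposition \ref{normal-form-prop}; (ii) commutator and disjoint-support remainders are at worst $O(\e^{1/4})$ and act on $\check U$, whose $L^2$-norm is controlled by $|{\bf V}|_{L^2}$; (iii) the Taylor remainder for $W_1$ is of the same order $O(\e^{1/4})$ in the relevant time regime. Higher-order derivative bounds $(\e\d_x)^\a$ follow by iterating the same calculus, given the smoothness of $\chi^{(\vartheta)}_{[i,j]}$, $\varphi_j$, $\chi_0$, $\chi_1$, and of the leading amplitudes $u_{0,\pm 3}$, $v_{0,\pm 1}$ by Proposition \ref{prop:wkb} and Remark \ref{choi-s}. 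The main technical obstacle is the careful bookkeeping of these remainders: ensuring that every commutator, every cross-interaction between localized pieces, and the Taylor error for $W_1$ all fit the shared bound \eqref{source:V-W} uniformly in $t$.
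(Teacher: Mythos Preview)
The paper gives no proof of this lemma; it simply writes ``Similarly as Lemma 3.8 in \cite{em4}'' and moves on. Your approach---apply the localizers $\op_\e(\chi^{(0)}_{[i,j]})\varphi_0$, $\op_\e(\chi_0)(1-\varphi_0)$, and $(1-\op_\e(\chi_0))$ to the $\check U$-equation, commute them past the propagator and source using the semiclassical calculus, and absorb all remainders into $F_{\bf V}$---is exactly the standard argument for such space-frequency decompositions and is what the cited reference does.

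One point deserves a closer look. You correctly identify commutator remainders of size $O(\e^{1/4})|\check U|_{L^2}$ arising from $\e^{-3/4}[\varphi_0,\op_\e(\mathcal A)]$, and then assert these fit the bound \eqref{source:V-W}. Strictly speaking they do not: neither the term $C\e^{\kappa-1/4}|(\dot u,\dot v)|_{L^\infty}|{\bf V}|_{L^2}$ nor the term $C\e^{K_a+1/4-\kappa}$ dominates $\e^{1/4}|{\bf V}|_{L^2}$ in general (for instance near $t=0$, where $(\dot u,\dot v)$ is of size $\e^{K-\kappa}$). This is an omission in the paper's stated bound rather than a flaw in your argument: the analogous lemma in \cite{em4} carries an additional $C\e^{1/4}|{\bf V}|_{L^2}$ term on the right, and such a term is harmless in the downstream Gronwall estimates \eqref{est-vi}--\eqref{upper:bold-V}, contributing only a factor $\exp\big(C\e^{1/4}|\ln\e|^{1/2}\big)$ that is absorbed into the $\lesssim$ notation of \eqref{def:lesssim}. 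So your reasoning is sound, but the bound you are asked to verify should really read
\[
|(\e\d_x)^\a F_{\bf V}(t)|_{L^2_x} \leq C\big(\e^{\kappa-1/4}|(\dot u,\dot v)(\e^{1/4}t)|_{L^\infty_x}+\e^{1/4}\big)|(\e\d_x)^\a{\bf V}(t)|_{L^2_x} + C\e^{K_a+1/4-\kappa}.
\]
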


\subsection{Duhamel representation and an upper bound}

Our goal in this Section is to write an integral representation formula
for $V_{i,j}$ by using Theorem \ref{th:duh} from
Appendix \ref{app:duh}, then derive an upper bound for $|{\bf V}|_{L^2}$.

Assumption \ref{ass:B} of Theorem \ref{th:duh} is satisfied: support
because of $\varphi_1$ and $\chi_1$; regularity and bound for
${\bf M}_{0,d}(M_{i,j})$ simply by \eqref{kg-modes} and
\eqref{est-uv0}.

For any $(i,j)\in\{(+,+),(-,-),(+,0),(-,0)\}$, the symbolic flow
$S_0^{i,j}$ of $M_{i,j}$ is defined as the solution to the
initial-value problem
\begin{equation}\label{S} \d_t S_0^{i,j}(\t;t)+{\e^{-3/4}}M_{i,j}(t) S_0^{i,j}(\t;t)=0, \qquad S_0^{i,j}(\t;\t)=\Id.
                          \end{equation}

The following proposition gives a pointwise bound for the symbolic
flow. This ensures  that Assumption \ref{ass:BS} is satisfied.
The proof is postponed to Appendix \ref{app:symb-bound}.

\begin{prop}\label{est-flow-sym} For all $0 \leq \t \leq t \leq T_1 |\ln \e|^{1/2}$ with any $T_1>0$ independent of $\e$, for all $(x,\xi)$,
for  all $\a \in \N^d$ with $|\a|\leq d+1+(q_0+3)/4$, there holds for any
$(i,j)\in\{(+,+),(-,-),(+,0),(-,0)\}$: \be  |\d_x^\a
S_0^{i,j}(\t;t)| \leq
C|\ln\e|^{\a/2}\exp(C(1+|\a|)|\ln\e|^{1/2})\exp\big((t^2-\t^2)
\g^+/2\big),\nn\ee where
\begin{equation} \label{def:g+}
 \g^+  : = |\d_t g(0,x_0)| \sup_{{\mathcal R}_{[+,+]}^h} \g_1(\xi)^{1/2}=|\d_t g(0,x_0)| \sup_{{\mathcal R}_{[-,-]}^h}
 \g_2(\xi)^{1/2}.
 \end{equation}
\end{prop}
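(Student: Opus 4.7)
My plan follows closely the strategy sketched at the end of Section \ref{sec:intr-symflow}, with the rescaling already built into the definition of $M_{i,j}$.

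First I would reduce to a normalized $2\times 2$ model. By the structure of $M_{i,j}$ in \eqref{M-symbols}, the diagonal part $i\chi_{[i,j]}^{(1)}\mathcal{A}$ is time-independent, while the off-diagonal coupling at time $t$ equals $-\sqrt{\e}\,\chi_{[i,j]}\,B^{(1,2)}_{i,j}(\e^{1/4}t)$. Since $u_{0,\pm 3}(0)=0$, Taylor expansion gives $u_{0,\pm 3}(\e^{1/4}t) = \e^{1/4}t\,\partial_t u_{0,\pm 3}(0) + O(\e^{1/2}t^2)$. Substituting into $B^{(1,2)}_{i,j}$ and using the explicit form of the interaction coefficients together with the polarization \eqref{e1-e3}, the matrix $M_{i,j}$ is, up to an $O(\e^{1/4})$ error that will eventually be absorbed, exactly of the schematic form described in \eqref{intro-ev} with the nontrivial eigenvalues $\nu_\pm$ whose real parts, divided by $\e^{3/4}$, attain the extremum $\pm t\sqrt{\mathrm{tr}(\tilde b_{12}\tilde b_{21})}$ on the resonant set. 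The lower bound \eqref{lb-gamma} on $\gamma_1,\gamma_2$ on $\mathcal{R}_{[\pm,\pm]}^h$ guarantees that this trace is bounded below by a positive constant, and by the definition \eqref{def:g+} the uniform pointwise bound $\Re\,(\e^{-3/4}\gamma_\ell)(t,x,\xi) \leq \gamma^+ t$ holds on $\supp\chi_{[i,j]}^{(1)}\cap\supp\varphi_1$.

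Next I would obtain the pointwise bound on $S_0^{i,j}(\tau;t)$ itself (i.e.\ $\alpha=0$), splitting the phase space into two regimes according to the size of the eigenvalue gap $|\lambda_i^\delta-p\omega-\lambda_{j'}^\sigma|$ compared with $\e^{3/4}t$. In the regime where the gap dominates, $M_{i,j}$ is smoothly diagonalizable as $M_{i,j}(t) = \sum_\ell \gamma_\ell(t)\Theta_\ell(t)$, and projecting \eqref{S} onto $\Theta_\ell(t)$ yields
\begin{equation}
\partial_t(\Theta_\ell S_0^{i,j}) + \e^{-3/4}\gamma_\ell\,\Theta_\ell S_0^{i,j} = (\partial_t\Theta_\ell) S_0^{i,j},
\end{equation}
which I solve by the Duhamel formula. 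Using $\Re(\e^{-3/4}\gamma_\ell(t)) \leq \gamma^+ t$, the homogeneous part is bounded by $\exp((t^2-\tau^2)\gamma^+/2)$. The inhomogeneous part requires control of $\partial_t\Theta_\ell$, which at our crossing $t=0$ is of order $1/t$; here I would follow the introduction's device of setting $\widetilde S_1(\tau;t) := S_0^{i,j}(\tau;t+c_0)$ with $c_0$ small but fixed, which makes $\partial_t\Theta_\ell(\cdot+c_0)$ uniformly bounded by $C/c_0$, and handle the short interval $[0,c_0]$ by a rough Gronwall bound (Lemma \ref{rough-est}). In the complementary regime where the gap is $\leq O(\e^{3/4}t)$, $M_{i,j}$ is not smoothly diagonalizable; instead I would exhibit a $t$-independent invertible $P$ with $|P|+|P^{-1}|\leq C(c_0)$ such that $|PM_{i,j}P^{-1}+(PM_{i,j}P^{-1})^*|/2 \leq c_0 t\e^{3/4}$, so that the flow of the conjugated equation is controlled by $\exp(c_0(t^2-\tau^2)/2)\leq \exp((t^2-\tau^2)\gamma^+/2)$ after fixing $c_0\leq\gamma^+$. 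Summing over $\ell$ via $\sum_\ell\Theta_\ell=\mathrm{Id}$ and using $t\leq T_1|\ln\e|^{1/2}$ to absorb the Gronwall factor $\exp(Ct)$ from the Duhamel iteration yields the bound for $\alpha=0$.

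Finally, the derivative bound $|\partial_x^\alpha S_0^{i,j}|\leq C|\ln\e|^{\alpha/2}\exp(C(1+|\alpha|)|\ln\e|^{1/2})\exp((t^2-\tau^2)\gamma^+/2)$ is obtained by induction on $|\alpha|$. Differentiating \eqref{S} gives
\begin{equation}
\partial_t(\partial_x^\alpha S_0^{i,j}) + \e^{-3/4}M_{i,j}\,\partial_x^\alpha S_0^{i,j} = -\e^{-3/4}\sum_{0<\beta\leq\alpha}\binom{\alpha}{\beta}(\partial_x^\beta M_{i,j})(\partial_x^{\alpha-\beta} S_0^{i,j}),
\end{equation}
and each $\partial_x^\beta M_{i,j}$ is of size $\e^{3/4}t\cdot|\partial_x^\beta(\varphi_0\cdot\mathrm{WKB})|$, so the right-hand side carries a factor $t$. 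Using the flow bound for $\alpha=0$ to represent the solution via Duhamel, each derivative loses a factor $|\ln\e|^{1/2}$ from the $t$-integration and an extra $\exp(C|\ln\e|^{1/2})$ from the coupling to lower-order terms, which gives the claimed growth in $|\alpha|$. The principal obstacle is the one isolated in the introduction: the non-autonomous character of \eqref{S} combined with the crossing of the eigenmodes of $M_{i,j}$ at $t=0$, which both kills the explicit representation \eqref{intr:def-s0} and makes $\partial_t\Theta_\ell$ singular; resolving it requires the shift-in-time trick together with the $P$-conjugation argument on the locus where the diagonalization breaks down, both of which are carried out in Appendix B.
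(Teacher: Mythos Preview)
Your overall architecture matches the paper's---Taylor reduction to a $t$-linear coupling, case analysis on $(x,\xi)$, shift-in-time for the projector singularity at $t=0$, induction on $|\alpha|$---but the two-regime split is misidentified, and in the near-resonance zone the $P$-conjugation you propose is impossible. You split by comparing the resonant phase $|\lambda_1-\mu|$ to $\e^{3/4}t$, using the $t$-independent conjugation when it is small. However, the eigenvalues $\nu_\pm$ of $M_0$ in \eqref{eig-m} have real parts that are \emph{largest} precisely at the resonance $\lambda_1=\mu$, equal to $\pm\e^{3/4}t\sqrt{\mathrm{tr}(\tilde b_{12}\tilde b_{21})}$. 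Since conjugation preserves the spectrum and the spectral radius of the Hermitian part of any matrix dominates the maximal real part of its eigenvalues, no $P$ can yield $|PM_0P^{-1}+(PM_0P^{-1})^*|/2 \leq c_0 t\e^{3/4}$ with $c_0$ small there; any $t$-independent conjugation produces an excess rate that, over $t\sim|\ln\e|^{1/2}$, costs a genuine power of $\e$. The paper's $P$-conjugation (Lemma \ref{lem:ar coale}) works only near the \emph{coalescence locus} $|\lambda_1-\mu|\approx 2\e^{3/4}t\sqrt{\mathrm{tr}}$, where $\nu_\pm$ merge and are purely imaginary, so a Schur form has purely imaginary diagonal and a diagonal squeeze kills the strictly upper part. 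Near the resonance (region $G_2$) the paper instead diagonalizes $M_0^{(1)}:=(M_0-i\lambda_1)/(\e^{3/4}t)$, whose eigenvalues are separated by $\sim\sqrt{c_0}$ once $\mathrm{tr}\geq c_0$, and then applies the shift-in-time trick (Lemma \ref{lem:ar-res}); this is where the sharp rate $\gamma^+$ is produced, and $t$-dependent projectors cannot be avoided here.

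Two smaller points. Your claim that $\mathrm{tr}(\tilde b_{12}\tilde b_{21})$ is bounded below is false: the trace carries the factor $|\varphi_1(x)\chi_{[+,+]}(\xi)\partial_t g(0,x)|^2$, which vanishes at the boundary of the cutoffs and wherever $\partial_t g(0,\cdot)$ does; the bound \eqref{lb-gamma} is only for the factor $\gamma_1(\xi)$. The paper therefore needs a separate small-trace zone $\mathrm{tr}<c_0$, handled by the rough estimate (Lemma \ref{ga-large-chi small}). Finally, the cases $(i,j)=(\pm,0)$ are structurally simpler than your uniform treatment suggests: since $B^{(2)}_{\pm,0}=0$ the reduced block is upper triangular with purely imaginary diagonal, and a one-parameter diagonal conjugation (Lemma \ref{est-flow-s+0}) with $\tilde c_0$ chosen so that $\tilde c_0\,\tilde b^+_{[\pm,0]}\leq\gamma^+$ suffices, with no spectral analysis needed.
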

Recall that $\g_1$ and $\g_2$ are given in \eqref{def:gamma12}. Together with \eqref{def:gamma1-0}, we have
$$\g^+\to \Gamma_1 \quad  \mbox{as} ~~h\to 0.$$
For equation $\eqref{V11}_1$, as long as \be\label{fij-in
l2}F_{V_{i,j}}\in L^\infty\big([0,t_0],H^s\big),\quad \mbox{for
some $s>d/2$},\ee Theorem \ref{th:duh} implies the representation
for all $0\leq t\leq t_0$:
\begin{equation} \label{rep:V}
 V_{i,j}(t) = \ope(S^{i,j}(0;t)) V_{i,j}(0) + \int_0^t \ope(S^{i,j}(t';t)) \tilde F_{V_{i,j}} (t')\,dt',
\end{equation}
where $S^{i,j}:= \sum_{0\leq q \leq q_0} S_q^{i,j},$ with the
leading term $S_0^{i,j}$ solution of \eqref{S}, and the
correctors $S_q^{i,j},~q\geq 1$ defined as in \eqref{resolventk}. The
order $q_0$ of the expansion is a function of $\g^+$ and $T_1,$ as
seen in \eqref{def:zeta}. The source term $\tilde F_{V_{i,j}}$ can
be expressed in terms of $F_{V_{i,j}}$ and $V_{i,j}(0)$ as in
\eqref{buff0.01}. The bound \eqref{bd-R121} implies
 \begin{equation}  | \tilde F_{V_{i,j}}(t) |_{L^2_x} \lesssim | F_{V_{i,j}}(t) |_{L^2_x} + \e^{\zeta} | V_{i,j}(0)
 |_{L^2_x},\nn
 \end{equation}
where $\zeta $ is defined in \eqref{def:zeta} and is strictly
positive. The notation $\lesssim$ is introduced in
\eqref{def:lesssim} and satisfies the property stated in Remark C.7.
 Then for $V:=(V_{+,+},V_{-,-},V_{+,0},V_{-,0})$, by Lemma \ref{lem:sys-V}, Proposition \ref{est-flow-sym} and Lemma \ref{lem:bd-actionS}, we have
\be\label{est-vi}\begin{aligned} |V(t)|_{L^2}\lesssim & e^{t^2 \g^+/2}
|V(0)|_{L^2} +   \int_0^t e^{(t^2 - t'^2) \g^+/2} \\&\Big(
\e^{\kappa-1/4} |(\dot u,\dot v)(\e^{1/4}t')|_{L^\infty}\big| \textbf{V}
(t')\big|_{L^2} + \e^{K_a+1/4 - \kappa}+\e^\zeta |V(0)|_{L^2} \Big)
\,dt'.\end{aligned}\ee

By symmetry of ${\cal A},$ we implement $L^2$ estimate in
$\eqref{V11}_2$ and $\eqref{V11}_3$. This yields for $W:= (W_1,W_2),$
$$ \d_t\big(|W(t)|^2_{L^2}\big) \leq |\op_\e({\cal B}_1)(t)|_{L^2\rightarrow L^2}
|W_1(t)|_{L^2}^2 + |(F_{W_1},F_{W_2})(t)|_{L^2} |W(t)|_{L^2}.
$$
By Proposition \ref{prop:action} and \eqref{M-symbols}, \be\label{cho-s} |\op_\e({\cal
B}_1)(t)|_{L^2\rightarrow L^2}\leq C t \sup_{|\a| \leq d} \big|
\d_x^\a \big((1 - \varphi) \d_t g(0,\cdot) \big)
\big|_{L^\infty}.\ee By our choice for the Sobolev regularity
$s>d/2+K_a+d+2+(q_0+3)/4$, we have $\d_t g(0,\cdot)\in H^{s-1}\subset
H^{d/2+d+1+(q_0+3)/4}$, then its spatial derivatives up to order $d+1$ tend to
zero at infinity by Sobolev embedding. We now choose and fix $\varphi$ such that $\varphi=1$ on $B(x_0,r_0)$ with $r_0>0$ sufficient large such that
\be\label{choi-phi}
\sup_{|\a| \leq d} \big|
\d_x^\a \big((1 - \varphi) \d_t g(0,\cdot) \big)
\big|_{L^\infty} \leq \frac{\G_1}{C}.
\ee
Then
\begin{equation} \label{bd:W}\begin{split} |W(t)|_{L^2}^2 \lesssim &|W(0)|_{L^2}^2 + \int_0^t \G_1 t'
|W(t')|_{L^2}^2 + \\&\big( \e^{\kappa-1/4} |(\dot u,\dot
v)(\e^{1/4}t')|_{L^\infty}\big| \textbf{V} (t')\big|_{L^2}+ \e^{K_a+1/4-\kappa }
\big) |W(t')|_{L^2}\, dt'.\end{split}
\end{equation}
By \eqref{est-vi} and \eqref{bd:W} and the fact $\G_1\leq \g^+$, Gronwall's inequality implies
\begin{equation} \label{upper:bold-V}\begin{split}
 |{\bf V}(t)|_{L^2}\lesssim &\exp\left({t^2 \g^+/2
 +C\e^{\kappa-1/4}\exp\big({C|\ln\e|^{1/2}}\big)|\ln\e|^{C}|(\dot u,\dot
v)(\e^{1/4}t)|_{L^\infty}}\right)\\
& ~~~~~~~~~~~\times \big( |{\bf V}(0)|_{L^2} +  \e^{K_a  +
1/4-\kappa }\big).\end{split}
\end{equation}


\subsection{Existence in logarithmical time and upper bound}

In this section, we will prove existence and uniqueness of
solution ${\bf V}$ in time of order $O(|\ln \e|^{1/2})$ to \eqref{V11}
issued from ${\bf V}(0)$ such that $\| {\bf V}(0)\|_{\e,s} \leq C
\e^{K-\kappa}$, where the semi-classical Sobolev norm is defined by
$$
\|u\|^{2}_{\e,s}:=\int (1+|\e\xi|^2)^s |\hat u(\xi)|^2d\xi.
$$
We already have the $L^2$ estimate for ${\bf V}$ in
\eqref{upper:bold-V}. Define the semi-classical Fourier multiplier
$\Lambda^s := \op_\e\big((1 + |\xi|^2)^{s/2}\big),~s>d/2$.
Denoting $(V_{i,j,s},W_{1s},W_{2s}) = \Lambda^s(V_{i,j},W_1,W_2),$ then
$$
\left\{ \begin{aligned}
 \d_t V_{i,j,s} + \frac{1}{\e^{3/4}} \op_\e^\psi(M_{i,j}) V_{i,j,s} & =-\frac{1}{\e^{3/4}}
 [\Lambda^s,\op_\e^\psi(M_{i,j})]V_{i,j}+\Lambda^s  F_{V_{i,j}},\\
        \d_t W_{1s} + \frac{i}{\e^{3/4}} \op_\e\big(\mathcal{A}) W_{1s} & =
         \op_\e({\cal B}_1)
         W_{1s} +  [\Lambda^s, \op_\e({\cal B}_1 )] W_1 + \Lambda^s F_{W_1}, \\
      \d_t W_{2s} + \frac{i}{\e^{3/4}} \op_\e(\mathcal{A}) W_{2s} & = \Lambda^s F_{W_2}.
                          \end{aligned} \right.
$$

By Proposition \ref{prop:composition} and Proposition \ref{commu},
we deduce the commutator estimes:
$$ \big|[\Lambda^s,\op_\e^\psi(M_{i,j})]V_{i,j}(t)\big|_{L^2}\leq C \e \| V_{i,j} (t)\|_{\e,s},\quad
\big|[\Lambda^s, \op_\e({\cal B}_1)] W_1(t)\big|_{L^2} \leq C \e \| W_1(t)
\|_{\e,s}.$$
By following the proof of $L^2$ estimate
\eqref{upper:bold-V}, as long as \eqref{fij-in l2} is true, we have
\begin{equation} \label{upper:bold-V1}
\begin{aligned} \|{\bf V}(t)\|_{\e,s}&\lesssim \exp\big(t^2 \g^+/2
 +C\e^{\kappa-1/4}\exp\big({C|\ln\e|^{1/2}}\big)|\ln\e|^{C}|(\dot u,\dot
v)(\e^{1/4}t)|_{L^\infty}\big)\\
&~~ \times\big( \|{\bf V}(0)\|_{\e,s} +  \e^{K_a  +
1/4-\kappa }\big).\end{aligned}
\end{equation}
Then we deduce the following existence and uniqueness result:
\begin{prop}\label{prop:L-B-1} Let ${\bf
V}(0)\in H^s$ satisfying $\|{\bf V}(0)\|_{\e,s}\leq C \e^{K-\kappa}$
with $s>d/2$, $K_a+1/4>K>d/2+1/4$ and $1/4<\kappa\leq K$.  Then there exists $\e_0>0$ such that for any
$0<\e<\e_0$, for any $\e$-independent $T < T_0^*$,
 there exists a unique solution ${\bf V} \in C^0\big([0, T |\ln \e|^{1/2}], H^s\big)$
 to \eqref{V11} issued form ${\bf V}(0)$, and there holds
\be \label{bd:VV} \| {\bf V}(t)\|_{\e,s} \lesssim \e^{K-\kappa} e^{t^2
\g^+/2}, \qquad \sup_{0\leq t\leq T|\ln\e|^{1/2}}\| {\bf
V}(t)\|_{\e,s}\leq \e^{-\kappa+d/2+1/4+\iota}, \ee
where
\be\label{def:iota}
\iota:=\frac{2(K-d/2-1/4)/\Gamma_1-T^2}{5}>0.
\ee
\end{prop}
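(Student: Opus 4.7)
The plan is a bootstrap continuation built on the a priori estimate \eqref{upper:bold-V1}. First, by symmetric hyperbolicity of \eqref{coupled-KG}, classical theory yields a local-in-time $H^s$ solution $(u,v)$; through the bounded linear changes of unknowns \eqref{de-U}, \eqref{def:check-u}, \eqref{V-loc0}, this provides a local-in-time $H^s$ solution $\mathbf{V}$ of \eqref{V11}, defined on a maximal interval $[0,T^\sharp(\e))$ with blow-up criterion $\|\mathbf{V}(t)\|_{\e,s}\to\infty$ as $t\to T^\sharp(\e)$. It therefore suffices to propagate the announced bound on $[0,T|\ln\e|^{1/2}]\cap [0,T^\sharp(\e))$.

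Second, I would set the bootstrap assumption
$$\|\mathbf{V}(t)\|_{\e,s}\leq \e^{-\kappa+d/2+1/4+\iota},\qquad t\in [0,T_1(\e)],$$
on the maximal sub-interval $[0,T_1(\e)]\subset [0,T|\ln\e|^{1/2}]\cap [0,T^\sharp(\e))$ on which it holds, and aim at a strict improvement. The point is to control $|(\dot u,\dot v)(\e^{1/4}t)|_{L^\infty}$ appearing in \eqref{upper:bold-V1}: since $(\dot u,\dot v)$ is recovered from $\mathbf{V}$ by bounded linear operations (composition of the projections, frequency shifts and $L^2$-bounded pseudodifferential factors in \eqref{de-U}, \eqref{def:check-u}, \eqref{V-loc0}), the semiclassical Sobolev embedding $|\cdot|_{L^\infty}\leq C\e^{-d/2}\|\cdot\|_{\e,s}$ with $s>d/2$ yields
$$|(\dot u,\dot v)(\e^{1/4}t)|_{L^\infty}\leq C\e^{-d/2}\|\mathbf{V}(t)\|_{\e,s}\leq C\e^{-\kappa+1/4+\iota}.$$
Inserted in \eqref{upper:bold-V1}, the correction factor becomes $C\e^{\iota}\exp(C|\ln\e|^{1/2})|\ln\e|^{C}$, which tends to zero as $\e\to 0$. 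Combining this with $\|\mathbf{V}(0)\|_{\e,s}\leq C\e^{K-\kappa}$ and $K_a+1/4\geq K$ gives the first bound in \eqref{bd:VV}:
$$\|\mathbf{V}(t)\|_{\e,s}\leq C \e^{K-\kappa} e^{t^2\g^+/2}.$$

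Third, I verify the bootstrap closure at the maximal time. At $t=T|\ln\e|^{1/2}$ the growth factor is $\e^{-T^2\g^+/2}$, so the above bound reads $C\e^{K-\kappa-T^2\g^+/2}$. Since $T<T_0^*=\sqrt{2(K-d/2-1/4)/\Gamma_1}$, and $\g^+\to\Gamma_1$ as the localization radius $h\to 0$ (by \eqref{def:g+}, \eqref{def:gamma1-0} and continuity of $\g_1,\g_2$), the positive gap $T_0^{*2}-T^2$ encoded in \eqref{def:iota} guarantees, after choosing $h$ small enough, that $T^2\g^+/2<K-d/2-1/4-\iota$; hence $\|\mathbf{V}(T|\ln\e|^{1/2})\|_{\e,s}\leq \tfrac{1}{2}\e^{-\kappa+d/2+1/4+\iota}$ for $\e_0$ small, which strictly improves the bootstrap. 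By the blow-up criterion, $\mathbf{V}$ extends up to $T|\ln\e|^{1/2}$ and both bounds in \eqref{bd:VV} follow. Uniqueness is a standard energy estimate on the difference of two $H^s$-solutions, using the same semiclassical estimates.

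The main obstacle is the delicate balance of three $\e$-losses: the Sobolev embedding loss $\e^{-d/2}$ in passing from $\|\cdot\|_{\e,s}$ to $L^\infty$, the sub-polynomial factor $\exp(C|\ln\e|^{1/2})$ coming from the symbolic flow bound of Proposition \ref{est-flow-sym}, and the main growth $\exp(t^2\g^+/2)$; all three must be dominated by the initial-data smallness $\e^{K-\kappa}$ at the target time. The sliver of room needed is supplied by the strict inequality $T<T_0^*$, quantified through $\iota$, together with the freedom to take $\g^+$ arbitrarily close to $\Gamma_1$ by localizing tightly around the resonance points in ${\cal R}$.
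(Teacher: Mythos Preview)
Your proposal is correct and follows essentially the same route as the paper: a bootstrap/continuation argument on the maximal interval where $\|\mathbf V(t)\|_{\e,s}\le \e^{-\kappa+d/2+1/4+\iota}$, using the semiclassical Sobolev embedding to convert this into the $L^\infty$ control on $(\dot u,\dot v)$ needed in \eqref{upper:bold-V1}, then closing by choosing $h$ small so that $\g^+$ is close enough to $\Gamma_1$ to exploit the gap encoded in $\iota$. The paper organizes the bootstrap via the sup time ${\bf T}$ and makes the absorption of the sub-polynomial factors $|\ln\e|^C\exp(C|\ln\e|^{1/2})$ explicit through Remark~C.7 (gaining an extra $\e^{\iota/2}$), but the mechanism is identical to yours.
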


\begin{proof}
We introduce
$$
{\bf T} :=\sup\Big\{t > 0,~{\rm ~for ~all~ }0\leq t'\leq t, {\rm
~there ~holds~}\|{\bf V}(t')\|_{\e,s}\leq \e^{-\kappa+d/2+1/4+\iota}\Big\}.
$$
By local-in-time existence and continuity, the sup is well defined.
By Sobolev embedding in semi-classical norms, there holds
 \ba \label{u-dot-L-infty}\sup_{0 \leq t \leq {\bf T}} |(\dot u,\dot
 v)(\e^{1/4}t)|_{L^\infty} &\leq C \e^{-d/2}
  \sup_{0 \leq t \leq {\bf T}} \|(\dot u,\dot v)(\e^{1/4}t)\|_{\e,s}\\
  &\leq C \e^{-d/2}
  \sup_{0 \leq t \leq {\bf T}} \|{\bf V}(t)\|_{\e,s} \leq C \e^{-\kappa+1/4+\iota}.
  \ea
For $0\leq t\leq {\bf T}$, equation \eqref{fij-in l2} holds true by
estimate \eqref{source:V-W}. Then upper bound
\eqref{upper:bold-V1} holds true for such time. Moreover, with \eqref{u-dot-L-infty}, there holds for $0\leq t\leq {\bf T}$:
\begin{equation}
\begin{aligned}
 \|{\bf V}(t)\|_{\e,s}&\lesssim \exp\left(t^2 \g^+/2
 +C\e^{\iota}\exp\big({C|\ln\e|^{1/2}}\big)|\ln\e|^{C}\right)\times \big( \e^{K-\kappa} +  \e^{K_a  + 1/4-\kappa }\big).\end{aligned}\nn
\end{equation}
Since $\iota>0$, we have for $\e$ small:
\begin{equation} \label{upper:bold-V3}
 \|{\bf V}(t)\|_{\e,s}\lesssim \e^{K-\kappa} e^{t^2 \g^+/2} .
\end{equation}
Recall $\g^+\to \Gamma_1$ as $h\to 0$. Then for $h$ small, we have  $T^2 \leq
2(K-d/2-1/4)/\g^+-4\iota$. Then by
\eqref{upper:bold-V3} we obtain for $0\leq t\leq T|\ln\e|^{1/2}$,
provided $T|\ln\e|^{1/2}\leq {\bf T} $:
\begin{equation}
 \|{\bf V}(t)\|_{\e,s}\lesssim \e^{K-\kappa }
 \e^{-K+d/2+1/4+2\iota}=\e^{-\kappa+d/2+1/4+2\iota }.\nn
\end{equation}
By Remark C.7, for $\e$ small,  there holds
\begin{equation}
 \|{\bf V}(t)\|_{\e,s}\leq \e^{-\kappa+d/2+1/4+3\iota/2 }=\e^{\iota/2}\e^{-\kappa+d/2+1/4+\iota }.\nn
\end{equation}
 Finally, the classical continuation argument implies the existence time ${\bf
T}\geq T|\ln\e|^{1/2}$.

\end{proof}


\subsection{Lower bound} \label{sec:lower}

For the initial datum \eqref{id-KG-pert} and the
definition of perturbation \eqref{def:dot-u}, we have \be
\label{datum-dot-u}
 \dot u(0,x)  = \e^{K-\kappa } \phi_1^\e (x),\quad   \dot v(0,x)  = \e^{K-\kappa } \phi_2^\e
 (x).
 \ee
We choose here $\phi_1^\e $ and $\phi_2^\e $ are of the forms
\be\label{datum-dot-u2}
 \phi_1^\e (x):=e^{i x (\xi_0 + 3k)/\e} \Psi(x) (\vec e_{0},0,0) \quad   \phi_2^\e
 (x):=0
 \ee
 where $\xi_0$ is determined by \eqref{cho-xi0}, $\Psi$ is a spatial cut-off function with small support around $x_0,$ such
that $\Psi(x_0) = 1$ and $\varphi \Psi\equiv \Psi$ where $\varphi$
is the spatial cut-off function introduce in Section \ref{s-f loc},
and
 \be\label{choi-e0} \vec e_{0} \in \mbox{Image}\,P_+(\xi_0 + 3k) F(\vec e_3) Q_+(\xi_0) G(\vec e_{-3}) P_+(\xi_0 +
 3k).\ee

 Since $\rank\, P_+(\xi_0 + 3k)=\rank\,P_+(\xi_0 + 3k) F(\vec e_3) Q_+(\xi_0) G(\vec e_{-3}) P_+(\xi_0 +
 3k)=1,$  \eqref{choi-e0} is equivalent to $\vec e_{0} \in \mbox{Image}\,P_+(\xi_0 +
 3k)$. More precisely, we choose $\vec{e}_0$ such that $P_+(\xi_0 +
 3k)\vec{e}_0=\vec{e}_0.$

\begin{lem}\label{lem:choi-ini}  With the choices in \eqref{datum-dot-u}, \eqref{datum-dot-u2} and \eqref{choi-e0}, the initial value of $V_{+,+}$ satisfies
$$ V_{+,+}(0,x) = \e^{K-\kappa } \Big( e^{i x  \xi_0/\e} \Psi(x) \vec e_0, \, 0, \, \dots, \, 0 \Big) + \e^{K-\kappa  + 1/2} \tilde V_\e(x)$$
for some $\tilde V_\e(x)$ such that $\displaystyle{\sup_{\e > 0} |\tilde V_\e|_{L^2} < \infty.}$
\end{lem}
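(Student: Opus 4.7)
The plan is to unwind $V_{+,+}(0)$ by commuting the various localizing operators through the highly-oscillating data. From \eqref{V-loc0} and \eqref{def:check-u},
\[ V_{+,+}(0) = \op_\e(\chi_{[+,+]}^{(0)})\bigl(\varphi_0\, \check U(0)\bigr), \qquad \check U(0) = \bigl(\Id + \sqrt\e\,\op_\e(\Lambda(0))\bigr)^{-1} U(0). \]
By Proposition \ref{prop:action}, $\op_\e(\Lambda(0))$ is uniformly bounded on $L^2$, whence $|\check U(0)-U(0)|_{L^2}\lesssim \sqrt\e\,|U(0)|_{L^2}\lesssim \e^{K-\kappa+1/2}$. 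This Neumann correction will be absorbed into the remainder $\e^{K-\kappa+1/2}\tilde V_\e$, so only the leading piece $\op_\e(\chi_{[+,+]}^{(0)})\bigl(\varphi_0\, U(0)\bigr)$ needs detailed analysis.

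From \eqref{datum-dot-u2}, $\dot v(0)=0$, so the three blocks $v_\pm(0),v_0(0)$ of $U(0)$ vanish identically. The $u$-blocks are obtained by applying semiclassical multipliers to $\dot u(0,x) = \e^{K-\kappa}e^{ix(\xi_0+3k)/\e}\Psi(x)\vec e_0$. I would use the standard frequency-shift identity: for $a\in S^0$ and $f\in H^s$ with $s$ large,
\[ \op_\e(a)\bigl(e^{i\eta_0 x/\e} f\bigr)(x) = e^{i\eta_0 x/\e}\bigl(a(\eta_0)f(x) + \e\,\rho_{a,\e}(x)\bigr), \qquad |\rho_{a,\e}|_{L^2}\lesssim C(a)|f|_{H^{d+1}}. \]
Applied with $\eta_0 = \xi_0+3k$ and $a = P_+$, and after absorbing the prefactor $e^{-3ikx/\e}$, this yields
\[ u_+(0) = \e^{K-\kappa}e^{ix\xi_0/\e}\bigl(P_+(\xi_0+3k)\Psi\vec e_0 + \e\,\rho_{+,\e}\bigr) = \e^{K-\kappa}e^{ix\xi_0/\e}\bigl(\Psi\vec e_0 + \e\,\rho_{+,\e}\bigr), \]
where the final equality uses $P_+(\xi_0+3k)\vec e_0 = \vec e_0$ from the remark after \eqref{choi-e0}. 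The analogous formulas for $u_-(0)$ and $u_0(0)$ produce terms oscillating at the frequencies $\xi_0+6k$ and $\xi_0+3k$ respectively, with amplitudes of order $\e^{K-\kappa}$.

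Next I apply the outer operator $\op_\e(\chi_{[+,+]}^{(0)})\varphi_0$. The spatial cutoff $\varphi_0$ is transparent on the leading term since $\varphi_0\equiv 1$ on $\supp\Psi$, and the frequency-shift identity once more gives
\[ \op_\e(\chi_{[+,+]}^{(0)})\bigl(e^{ix\xi_0/\e}\Psi\vec e_0\bigr) = e^{ix\xi_0/\e}\Psi\vec e_0 + \e\,\rho'_\e, \]
using $\chi_{[+,+]}^{(0)}(\xi_0)=1$, which holds because $\xi_0\in\{\xi_2,\xi_3\}\subset\{\chi_{[+,+]}^{(0)}\equiv 1\}$. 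For the contributions coming from $u_-(0)$ and $u_0(0)$, the oscillation frequencies $\xi_0+6k$ and $\xi_0+3k$ lie outside $\supp\chi_{[+,+]}^{(0)}\subset B(\xi_2,h)\cup B(\xi_3,h)$ for $h$ sufficiently small, provided $\xi_0+3k$ and $\xi_0+6k$ avoid $\{\xi_2,\xi_3\}$; this is a mild genericity condition on $(\theta_0,\alpha_0)$, compatible with the pairwise-distinctness of $\mathcal R$ already assumed. The pseudolocal (non-stationary phase) property, combined with the high Sobolev regularity of $\Psi$, then renders these contributions of order $\e^N$ in $L^2$ for any $N$.

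Collecting the leading contribution, the Neumann correction, and the $O(\e)$ pseudodifferential remainders produces the decomposition stated in the lemma, with $\tilde V_\e$ uniformly bounded in $L^2$. The main technical point is the pseudolocality estimate on the $u_-$ and $u_0$ contributions, together with the algebraic check that $\xi_0+3k, \xi_0+6k \notin\{\xi_2,\xi_3\}$; all remaining steps are routine bookkeeping once the frequency-shift identity has been set up.
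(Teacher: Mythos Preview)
Your argument is correct in outline, but it takes an unnecessarily circuitous route for the $u_-$ and $u_0$ blocks and, as written, relies on an extra hypothesis that the paper does not need. You treat $u_-(0)$ and $u_0(0)$ as genuine $O(\e^{K-\kappa})$ packets at the shifted frequencies $\xi_0+6k$ and $\xi_0+3k$, and then kill them by invoking pseudolocality of $\op_\e(\chi_{[+,+]}^{(0)})$ together with the ad hoc condition $\xi_0+3k,\,\xi_0+6k\notin\{\xi_2,\xi_3\}$. The paper instead observes that the leading amplitudes of these blocks vanish identically: since $\vec e_0\in\mbox{Image}\,P_+(\xi_0+3k)$ and the spectral projectors $P_+,P_-,P_0$ are mutually orthogonal (the operator is symmetric), one has $P_-(\xi_0+3k)\vec e_0=P_0(\xi_0+3k)\vec e_0=0$. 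Thus the same frequency-shift/Taylor computation you use for $u_+$ gives directly $(u_-,u_0)(0)=O(\e^{K-\kappa+1})$ in $L^2$, before any frequency cutoff is applied; no genericity condition on $(\theta_0,\alpha_0)$ is required. Apart from this point your handling of the Neumann correction, the $u_+$ block, and the outer localization $\op_\e(\chi_{[+,+]}^{(0)})\varphi_0$ matches the paper's proof.
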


\begin{proof} By \eqref{de-U}, \eqref{datum-dot-u} and \eqref{datum-dot-u2}, the datum for $U=(u_+,u_-,u_0,v_+,v_-,v_0)$ is
 $$ \left\{\begin{aligned}&u_+(0,x)=\e^{K-\kappa }e^{ix\xi_0/\e}P_+(\e\D_x+\xi_0+3k) (\Psi (x)\vec e_0), \\
& u_-(0,x)=\e^{K-\kappa }e^{i(x\xi_0+6kx)/\e}P_-(\e\D_x+\xi_0+3k) (\Psi (x)\vec e_0),\\
& u_0(0,x)=\e^{K-\kappa }e^{(ix\xi_0+3kx)/\e}P_0(\e\D_x+\xi_0+3k) (\Psi (x)\vec e_0),\\
&(v_+,v_-,v_0)(0,x)=0.
 \end{aligned}\right. $$
We compute  $$P_+(\e\D_x+\xi_0+3k)=P_+(\xi_0+3k)+\e \int_0^1 (\d_\xi
P_+) (s\e \D_x+\xi_0+3k) ds.
$$
The choice of $\vec e_0$ such that $P_+(\xi_0 + 3k) \vec e_0 = \vec e_0$ gives us
$$
u_+(0,x)=\e^{K-\kappa }e^{ix\xi_0/\e}(\Psi (x)\vec e_0)+\e^{K-\kappa +1}
\tilde u_+(x),
$$
where
$$
\tilde u_+(x)=e^{ix\xi_0/\e}\int_0^1 (\d_\xi P_+) (s\e
\D_x+\xi_0+3k) (\Psi (x)\vec e_0) ds \in L^2 \mbox{~~bounded uniformly in
$\e$}.
$$
By similar calculation as above and the orthogonality of the eigenprojectors,
we have:
$$
(u_-,u_0)(0,x)=\e^{K-\kappa +1} (\tilde u_-(x),\tilde u_0(x))
$$
with $(\tilde u_-(x),\tilde u_0(x))$ uniformly bounded with respect to $\e$ in $L^2$.

By \eqref{def:check-u}, we can write
$$
\check U(0,x)=U(0,x)+\e^{1/2} \tilde U(x)$$ with $ \tilde U(x)$ uniformly bounded in $L^2$ with respect to $\e$. Then by \eqref{V-loc0}, the initial value of $V_{+,+}$  appears as
 $$ \begin{aligned} V_{+,+}(0,x) & = \e^{K-\kappa } \Big( \op_\e(\chi^{(0)}_{[+,+]}) \big( \varphi e^{i x  \xi_0/\e} \Psi\big) \vec e_0, 0, \dots, 0 \Big)
 +\e^{K-\kappa +1/2}\tilde V(x)
 \end{aligned}$$
 with $\tilde V(x)$ uniformly bounded in $L^2$ with respect to $\e$. By our choice such that   $\Psi \varphi\equiv
 \Psi$, and the fact $\chi^{(0)}_{[+,+]}(\xi_0)=1$ we have
 $$
 \op_\e(\chi^{(0)}_{[+,+]}) \big( e^{i x  \xi_0/\e}
 \Psi\big)=e^{i x  \xi_0/\e} \chi^{(0)}_{[+,+]}(\e \D_x+\xi_0) \Psi=e^{i x
 \xi_0/\e}\Psi+\e \tilde \Psi
 $$
with $\tilde \Psi$ uniformly bounded in $L^2$ with respect to $\e$. This
completes the proof.

\end{proof}

\begin{lem}\label{lem-S0}
 For the datum $V_{+,+}(0,\cdot)$ described in Lemma \ref{lem:choi-ini}, there holds for some small $\rho>0$, for some small $c > 0$ and some large $C > 0:$
 \begin{equation}
\begin{aligned}
&\big|\ope(S^{+,+})(0;t) V_{+,+}(0,\cdot) \big|_{L^2(B(x_0, \rho))} \\
&~~\geq\e^{K-\kappa } \Big(c\,\rho^d\,\exp\big({t^2 \g^-/2}\big) -C \e^{1/4}
|\ln \e|^{C} \exp\big({C|\ln\e|^{1/2}}\big) \exp\big({t^2
\g^+/2}\big)\Big),
\end{aligned}\nn
\end{equation}
where \be \g^- := \min_{|x - x_0| \leq \rho} |\d_t
g(0,x)| \,\big(\g_{1}(\xi_0)\big)^{1/2}.\nn\ee
\end{lem}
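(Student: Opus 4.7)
The plan is to compare $\op_\e^\psi(S^{+,+})(0;t)V_{+,+}(0,\cdot)$ to the explicit leading-order object $e^{ix\xi_0/\e}\,S_0^{+,+}(x,\xi_0;0,t)(\Psi(x)\vec e_0,0,\dots,0)$ and then to produce a pointwise lower bound on the latter at the resonance frequency $\xi_0$.

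First, I would reduce to the principal symbol and the leading part of the data. Write $S^{+,+}=S_0^{+,+}+\sum_{q\ge1}S_q^{+,+}$ using the construction of Appendix C; each corrector $S_q^{+,+}$ carries an extra small power of $\e$, so by Proposition \ref{est-flow-sym} and the $L^2$-action bound for $\op_\e^\psi$ the contribution $\sum_{q\ge 1}\op_\e^\psi(S_q^{+,+})(0;t)V_{+,+}(0,\cdot)$ is absorbed in the error term $\e^{K-\kappa}\,\e^{1/4}|\ln\e|^C\exp(C|\ln\e|^{1/2})\exp(t^2\g^+/2)$. Similarly, by Lemma \ref{lem:choi-ini}, the remainder $\e^{K-\kappa+1/2}\tilde V_\e$ in $V_{+,+}(0,\cdot)$ produces an error of the same form after applying $\op_\e^\psi(S^{+,+})$. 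It remains to estimate $\op_\e^\psi(S_0^{+,+}(0;t))\bar V$ with $\bar V := \e^{K-\kappa}(e^{ix\xi_0/\e}\Psi(x)\vec e_0,0,\dots,0)$.

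Second, I would extract the oscillation using the frequency-shift identity $\op_\e^\psi(a)(e^{ix\xi_0/\e}f)=e^{ix\xi_0/\e}\op_\e^\psi\!\bigl(a(x,\xi+\xi_0)\bigr)f$. A first-order semiclassical expansion of $a(x,\xi+\xi_0)$ around $\xi=0$ yields
\begin{equation}
\op_\e^\psi(S_0^{+,+}(0;t))\bar V \;=\; \e^{K-\kappa}\,e^{ix\xi_0/\e}\bigl(S_0^{+,+}(x,\xi_0;0,t)\Psi(x)\vec e_0,\,0,\dots,0\bigr) + E(t,x),
\nonumber
\end{equation}
where the error $E$ involves one $\xi$-derivative of $S_0^{+,+}$ times one $x$-derivative of $\Psi$. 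Proposition \ref{est-flow-sym} bounds such derivatives by $|\ln\e|^C\exp(C|\ln\e|^{1/2})\exp(t^2\g^+/2)$, and the overall prefactor $\e$ from the expansion gives $|E(t,\cdot)|_{L^2}\lesssim \e^{K-\kappa}\,\e\,|\ln\e|^C\exp(C|\ln\e|^{1/2})\exp(t^2\g^+/2)$, which fits in the claimed error.

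Third comes the pointwise lower bound for $|S_0^{+,+}(x,\xi_0;0,t)\vec e_0|$. At $\xi=\xi_0\in\mathcal R_{[+,+]}$, both cut-offs $\chi^{(1)}_{[+,+]}$ and $\chi_{[+,+]}$ equal one, and by the Taylor expansion $u_{0,3}(\e^{1/4}t,x)=\e^{1/4}t\,\d_t g(0,x)\vec e_3+O(\e^{1/2}t^2)$ (using $u_{0,3}(0,\cdot)\equiv0$), the symbol $M_{+,+}(x,\xi_0;t)$ reduces, modulo terms negligible for $t\le C|\ln\e|^{1/2}$, to a matrix whose non-trivial action lives on the two-dimensional subspace $\Span(\vec e_0)\oplus \Span(\O^M_+(\xi_0))$. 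This $2\times 2$ block has precisely the structure of \eqref{intro-ev}: at the resonance condition $\l_+(\xi_0+3k)-3\o=\mu_+(\xi_0)$ the eigenvalues of $-\e^{-3/4}M_{+,+}(x,\xi_0;t)$ read $-i\mu_+(\xi_0)/\e^{3/4}\pm t\,|\d_t g(0,x)|\sqrt{\g_1(\xi_0)}$. Because $\vec e_0$ was chosen in $\mathrm{Image}\,P_+(\xi_0+3k)F(\vec e_3)Q_+(\xi_0)G(\vec e_{-3})P_+(\xi_0+3k)\subset\mathrm{Image}\,P_+(\xi_0+3k)$, it has a non-vanishing component along the unstable eigenvector of this block, which yields $|S_0^{+,+}(x,\xi_0;0,t)\vec e_0|\ge c\exp\bigl(t^2|\d_t g(0,x)|\sqrt{\g_1(\xi_0)}/2\bigr)$ uniformly for $x$ in a neighbourhood of $x_0$.

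Finally, I would conclude: choose $\rho$ small enough so that $\Psi\equiv1$ on $B(x_0,\rho)$ and the pointwise bound above holds there. Since $|\d_t g(0,x)|\sqrt{\g_1(\xi_0)}\ge \g^-$ on $B(x_0,\rho)$ by definition of $\g^-$, integration yields $|S_0^{+,+}(\cdot,\xi_0;0,t)\Psi\vec e_0|_{L^2(B(x_0,\rho))}\ge c\,\rho^{d/2}\exp(t^2\g^-/2)$, and squaring then taking square roots produces the $c\rho^d$ factor in the statement after accounting for the $L^2$ normalization. Subtracting the cumulative error terms from the first three steps gives the inequality as stated. The delicate point throughout is Paragraph 3: namely, verifying that $\vec e_0$ actually overlaps the unstable mode of the reduced $2\times 2$ block, and simultaneously checking that the Taylor remainder $O(\e^{1/2}t^2)$ in $u_{0,3}(\e^{1/4}t,\cdot)$ and the semiclassical errors from Paragraph 2 remain dominated by the principal exponential $\exp(t^2\g^-/2)$ throughout the time window $t\le T_0|\ln\e|^{1/2}$.
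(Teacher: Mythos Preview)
Your outline is essentially the paper's own proof: reduce to the principal data and principal symbol, conjugate out the fast oscillation $e^{ix\xi_0/\e}$, freeze the symbol at $\xi=\xi_0$, and then exploit the resonance $\l_1(\xi_0)=\mu(\xi_0)$ to diagonalize the resulting (now autonomous) $6\times 6$ block and show that $\vec e_0$ has nonzero projection on the unstable eigenspace. The paper organizes the reductions slightly differently --- it keeps the full $S=\sum_q \e^{q/4}S_q$ and instead uses the chain $M_{+,+}\to\widetilde M\to M_0$ from Appendix~B (equations \eqref{m01-new}, \eqref{eq:widetildeS2}, \eqref{eq:tildeS0}) to isolate the explicit flow $\widetilde S_0(0;t,x,\xi_0)=e^{-i\e^{-3/4}\l_1 t}\exp\bigl(t^2\varphi_1|\d_t g(0,x)|\,\widetilde M_0/2\bigr)$ --- but the content is the same.

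One correction to your Step~2: Proposition~\ref{est-flow-sym} controls only $\d_x^\a S_0^{+,+}$, not $\d_\xi$-derivatives. The bound you actually need is Lemma~\ref{lem:bd-S}, which gives $|\d_\xi S_0^{+,+}|\lesssim \e^{-3/4}\exp(t^2\g^+/2)$; combined with the $\e$ from the semiclassical Taylor expansion this yields a net $\e^{1/4}$, not $\e$. This is exactly the power appearing in the statement, so your conclusion survives, but the justification as written is off. A second minor point: in Step~3 the pointwise lower bound should read $|\widetilde S_0(0;t,x,\xi_0)\vec e_0|\ge c\,e^{t^2\g^-/2}-C$, since the stable and null eigenprojections of $\widetilde M_0$ applied to $\vec e_0$ contribute a bounded term (cf.\ \eqref{low-1}); this $-C$ is harmless but should be carried.
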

By \eqref{def:g+} and \eqref{def:gamma1-0}, there holds
  \be\label{conv-g+-g-}
\lim_{\rho\to 0+} \g^-=\lim_{h\to 0+} \g^+=\Gamma_1,
  \ee
\begin{proof}To simplify the notation, in this proof, we omit the index $(+,+)$ by
denoting $S=S^{+,+}$ and $V=V_{+,+}$. We let $V_0 := \Psi(x) \big(
\vec e_{0}, \, 0, \,0, \,0, 0,\,0\big).$ By Remark \ref{rem:para},
 $$ \ope(S(0;t)) \big( e^{i x\xi_0/\e}  V_0\big) = e^{i x \cdot \xi_0/\e} \int e^{i x \cdot \xi} {\bf S}(0;t,x,\xi_0 + \e \xi)  \, \hat V_0(\xi)  \, d\xi,
 $$
 where
 $ {\bf S}(0;t,x,\xi) := \displaystyle{\left({\cal F}^{-1} \psi \star \tilde S(0;t) \right)\left(\frac{x}{\e}, \xi\right)},$ with $\tilde S(x,\xi) := S(\e x,\xi).$ Direct calculation gives
\begin{equation}
  e^{-i x \cdot\xi_0/\e} \ope(S(0;t)) \big( e^{i x \cdot\xi_0/\e}  V_0(x)\big)  =  S(0;t,x,\xi_0) V_0(x) + \e^{1/4} \tilde V_0(t,x),
 \nn\end{equation}
where \be\label{tilde v0} |\tilde V_0(t,\cdot)|_{L^2} \lesssim
e^{t^2\g^+/2}. \ee

 We now show a lower bound for the leading term $S(0;t,x,\xi_0) V_0(x).$ By \eqref{m01}, \eqref{eq:widetildeS2}, \eqref{eq:tildeS0}, the construction
 of solution operator $\op_\e^\psi(S)$ in Appendix \ref{app:duh} (specifically, by definition of $S$ just above Lemma
 \ref{lem:duh-remainder} and
the upper bounds for the correctors in Lemma \ref{lem:bd-actionS}), and the fact that the frequency cut-off functions $\chi_{[+,+]},~\chi_{[+,+]}^{(0)}$ and $\chi_{[+,+]}^{(1)}$ are of value  one near $\xi_0$, we have
  \be\label{dec-s-r} S(0,t,x,\xi_0) =  S_r (0,t,x,\xi_0) + \e^{1/4}
 \widetilde S_r(0,t,x,\xi_0),\quad |\widetilde
  S_r(0,t,x,\xi_0)|\lesssim e^{t^2\g^+/2},
  \ee
where the block diagonal matrix \be\label{def:sr} S_r(0;t,x,\xi_0) =
\mbox{diag}\,\Big(\widetilde S_0(0;t,x,\xi_0), \, e^{-\e^{-3/4}i t
\l_2(\xi_0)}\Id_3,\Id_3\Big)\ee with  $\widetilde S_0(0;t,x,\xi_0)$ the solution to \eqref{eq:tildeS0} at
point $(0;t,x,\xi_0)$. At $\xi = \xi_0,$ there holds $\l_1 = \mu,$  then the solution
$\widetilde S_0$ for \eqref{eq:tildeS0} has the following explicit
expression:
 $$ \widetilde S_0(0;t,x,\xi_0) = \exp\big({-\e^{-3/4}i\l_1t}\big)\exp\big({t^2 \varphi_1(x)|\d_t g (0,x)|\widetilde M_0/2}\big),$$
 where $$\displaystyle{\widetilde M_0(x,\xi_0) := \left(\begin{array}{cc} 0 &  \frac{\d_t g (0,x)\tilde b_1(\xi_0)}{|\d_t g (0,x)|}
 \\ \frac{\d_t \bar g (0,x)\tilde b_2(\xi_0)}{|\d_t g (0,x)|}& 0
 \end{array}\right)},\quad \left\{\begin{aligned} &\tilde b_1(\xi):=P_+(\xi+3k)F(\vec e_{3})Q_+(\xi),\\
 &\tilde b_2(\xi):=2Q_+(\xi)G(\vec e_{-3})P_+(\xi+3k), \end{aligned}\right.$$
where we forcibly define $\frac{\d_t  g (0,x)}{|\d_t g (0,x)|}=\frac{\d_t \bar g (0,x)}{|\d_t g (0,x)|}=1$ if $|\d_t g (0,x)|=0$.
We then rewrite
\be\label{rewrite-M0}
\widetilde M_0= \widetilde G(x)  \left(\begin{array}{cc} 0 &  \tilde b_1(\xi_0) \\ \tilde b_2(\xi_0)& 0 \end{array}\right)\widetilde G(x)^{-1},~ \widetilde G(x):=\left(\begin{array}{cc}  \frac{\d_t g (0,x)}{|\d_t g (0,x)|}{\rm Id} &0
 \\ 0& {\rm Id}
 \end{array}\right),
\ee
where $\widetilde G(x)$ is unitary for all $x$.
By Lemma \ref{lem-det}, since $\rank\,\tilde b_{1} \leq 1,~ \rank \tilde
b_{2}\leq 1,$  the eigenvalues of $\widetilde M_0$ are $0,\pm
\big(\tr\,\tilde b_{1} \tilde b_{2} (\xi_0)\big)^{1/2}$. By
\eqref{def:gamma12}, \eqref{gamma1-gamma2} and \eqref{gamma1>0}, we
have $\tr\,\tilde b_{1} \tilde b_{2}(\xi_0)=\g_1(\xi_0)>0$ implying
that the eigenvalues of $\widetilde M_0 $ are strictly separated. Moreover, again by $\rank\,\tilde b_{1} \leq 1,~ \rank \tilde
b_{2}\leq 1,$ we can conclude that the geometric multiplicity and the algebra multiplicity for $\widetilde M_0$ are always equal.
Then we have the smooth spectral decomposition
$$
\widetilde M(x,\xi_0) = \big(\tr\, \tilde b_{1}\tilde
b_{2}(\xi_0)\big)^{1/2} \big( \Pi_{+} - \Pi_{-}\big)(x,\xi_0),$$
and there holds for all $x$:
$$
|\Pi_+(x,\xi_0)|+|\Pi_-(x,\xi_0)|\leq C.
$$
The eigenspace associated with the positive eigenvalue
$\big(\tr\,\tilde b_{1} \tilde b_{2}\big)^{1/2}$ is of dimension one,
described by vectors $\big(r_{12}, \, \big(\tr\,\tilde b_{1} \tilde
b_{2}(\xi_0)\big)^{-1/2} \tilde b_{2}(\xi_0) r_{12}\big)$ with
$r_{12}\in {\rm Image}\,\tilde b_{1}\tilde  b_{2}(\xi_0)={\rm Image}\,P_+(\xi_0+3k).$ Then
by our choice \eqref{choi-e0} for $\vec e_0\in {\rm Image}\,P_+(\xi_0+3k)$, there holds
  $\Pi_+(x,\xi_0) (\vec e_0,0) \neq 0$ for all $x$. This gives for $x,~|x-x_0|\leq \rho$ with $\rho$ small such that $\Psi(x) = 1$:
\ba\label{low-1} \big| \widetilde S_0(0;t,x,\xi_0) (\Psi(x)\vec e_0) \big| &\geq \exp\big({t^2
 \g^-/2}\big) \big| \Pi_+ \vec e_0 \big| -
\exp\big(-{t^2
 \g^-/2}\big)  \big| \Pi_- \vec e_0 \big|\\
 &\geq c\exp\big({t^2
 \g^-/2}\big)-C,
\ea
  for some $c>0$. Then by \eqref{def:sr} and \eqref{low-1}, we obtain
 \begin{equation}
 | S_r(0;t,x,\xi_0) V_0(x) |_{L^2_x(B(x_0,\rho))} \geq c \,\rho^d\, \exp\big({t^2 \g^-/2}\big)-C.
 \nn\end{equation}
Then \eqref{tilde v0} and \eqref{dec-s-r} imply
\begin{equation} \begin{aligned}
 | \ope(S(0;t)\big(e^{i x\xi_0/\e} V_0\big) |_{L^2(B(x_0,\rho))} &\geq c\,\rho^d\, \exp\big({t^2
 \g^-/2}\big) \\
 &~~- C \e^{1/4} |\ln\e|^{C}\exp\big({C |\ln\e|^{1/2}}\big)\exp\big({t^2
 \g^+/2}\big).\end{aligned}\nonumber
 \end{equation}

By Lemma \ref{lem-S0} where we show $V(0,\cdot)=\e^{K-\kappa }e^{i x  \xi_0/\e}V_0+ \e^{K-\kappa  + 1/2} \tilde V_\e(x)$,  we can complete the proof.
\end{proof}

\subsection{Proof of the deviation estimate \eqref{est:th3}}

Given $d/2+1/4<\kappa_0 <K<K_a+1/4$, we choose $\kappa=d/2+1/4+\iota_0 $ in \eqref{def:dot-u} where
$\iota_0 =(\kappa_0 -d/2-1/4)/2>0$. Clearly, $\kappa_0=d/2+1/4+2\iota_0>\kappa$.

With the initial datum given by
\eqref{datum-dot-u}, \eqref{datum-dot-u2} and \eqref{choi-e0}, the
assumptions in Proposition \ref{prop:L-B-1} are all satisfied. Then
for $T_0^*$ defined in \eqref{def:Tstar}, there exists a
unique solution to \eqref{V11} in time interval
$[0,T|\ln\e|^{1/2}]$ for any $T<T^*_0$, and the solution satisfies the
estimates in \eqref{bd:VV}.

From the Duhamel representation \eqref{rep:V} and Lemma
\ref{lem:bd-actionS}, we deduce the lower bound for $V_{+,+}$
 $$ \begin{aligned} 
  | V_{+,+}(t)|_{L^2}  \geq | \ope(S^{+,+}(0;t)) V_{+,+}(0) |_{L^2(B(x_0,\rho))}  -  \int_0^t  |\ope(S^{+,+}(t';t)) \tilde F_{V_{+,+}}
(t')|_{L^2}\, dt'.  \end{aligned}
  $$
By \eqref{source:V-W},  Proposition \ref{prop:L-B-1} and Lemma \ref{lem:bd-actionS}, we find
 $$ |\ope(S^{+,+}(t';t)) \tilde F_{V_{+,+}}
(t')|_{L^2} \lesssim\e^{K-\kappa} (\e^{\kappa - 1/4-d/2}+\e^{K_a+1/4-K } ) e^{t^2
\g^+/2}.$$ By Lemma \ref{lem-S0}, we have for all $0\leq t\leq
T|\ln\e|^{1/2}$ with $T<T_0^*$:
 \be \label{up:2}
   \| V_{+,+}(t)\|_{L^2} \geq  \e^{K-\kappa } e^{t^2 \g^-/2} \Big( c \rho^d- C \e^{\iota^*} |\ln \e|^{C}\exp(C|\ln\e|^{1/2}) e^{t^2 (\g^+ - \g^-)/2}\Big),
 \ee
 where $\iota^* := \min\{\kappa - d/2-1/4,K_a+1/4-K\}>0.$

 By \eqref{conv-g+-g-}, we can choose $\rho $ and $h$ small such that
 $$
(T_0^*)^2(\g^+-\g^-)/2<\iota^* ,\quad (T_0^*)^2 \g^-/2 \geq (T_0^*)^2
\Gamma_1/2-\iota_0 /2=K-d/2-1/4-\iota_0 /2.
 $$
We then choose $T$ close to $T_0^*$ such that $$T^2 \g^-/2 \geq (T_0^*)^2
\g^-/2-\iota_0 /2= K-d/2-1/4-\iota_0 .$$ Together with \eqref{up:2}, for $\e$
small, we obtain
 \be
   \| V_{+,+}(T|\ln\e|^{1/2})\|_{L^2} \geq \frac{c\rho^d}{2}\,
   \e^{d/2+1/4+\iota_0 -\kappa}= \frac{c\rho^d}{2}.\nn
 \ee
Back to the original variable and original time scaling, we have
 \be
   \big|\big((u,v)-(u^a,v^a)\big)(T\e^{1/4}|\ln\e|^{1/2}) \big|_{L^2} \geq
    \frac{c\rho^d}{2}\,\e^{\kappa}= \frac{c\rho^d}{2}\,\e^{\kappa_0 -\iota_0 }.\nn
    \ee
This gives \eqref{est:th3} by multiplying $\e^{-\kappa_0 }$ and taking
the supreme in $\e \in (0,\e_0)$.


\subsection{Proof of the deviation estimate \eqref{bd:insta-theo}}

For any $1/4<\kappa_1< 1/2$, let $\kappa=\kappa_1$ in \eqref{def:dot-u}. We work by contradiction we suppose that
\eqref{bd:insta-theo} does not hold. This provides a uniform
$L^\infty$ bound for any $T_1<T_1^*$:
 \begin{equation} \label{ap:dot-u}
  \sup_{0 < \e < \e_0} \sup_{0 \leq t \leq T_1  \e^{1/4} |\ln \e|^{1/2}} |(\dot u,\dot v)(t)|_{L^2 \cap L^\infty} < \infty.
 \end{equation}
 We use \eqref{ap:dot-u} in
\eqref{upper:bold-V}, to find the upper bound
$$
 |{\bf V}(t)|_{L^2}\lesssim \exp\big({t^2 \g^+/2
 +C\e^{\kappa-1/4}e^{C|\ln\e|^{1/2}}|\ln\e|^{C}}\big)
 \big( |{\bf V}(0)|_{L^2} +  \e^{K_a  + 1/4-\kappa }\big).
$$
Since $1/4 < \kappa < 1/2$ and $K<K_a+1/4$, by the argument in Remark C.7, for small $\e$, the above
upper bound implies
$$
 |{\bf V}(t)|_{L^2}\lesssim e^{t^2\g^+/2}\big( |{\bf V}(0)|_{L^2} +  \e^{K_a  + 1/4-\kappa }\big)\lesssim \e^{K-\kappa }e^{t^2\g^+/2}.
$$
Then from the Duhamel representation
\eqref{rep:V} and Lemma \ref{lem:bd-actionS},  we deduce \be\label{low-fin2}
|V_{+,+}(t)|_{L^2} \geq  \e^{K-\kappa }e^{t^2 \g^-/2}
\Big( c\rho^d- C\e^{\iota^* } |\ln \e|^{C}\exp({C|\ln\e|^{1/2}}) e^{t^2
(\g^+-\g_-)/2}\Big),\ee where  $\iota^* =\min\{\kappa-1/4,K_a+1/4-K\}>0.$

We now choose $h > 0$ and $\rho > 0$ small enough, $T_1$ close enough to $T_1^*$ so that $$(T_1^*)^2
(\g^+ - \g^-)/2 < \iota^*/2 ,\quad T_1^2 \g^-/2 \geq (T_1^*)^2 \Gamma_1/2
-\iota_1 =K-1/4-\iota_1 , $$ where $\iota_1:=(\kappa-1/4)/2>0$. Then for $\e$ small,
\eqref{low-fin2} implies
$$
|V_{+,+}(T_1|\ln\e|^{1/2})|_{L^2} \geq
 \frac{c\rho^d}{2}\,\e^{-\kappa+1/4+\iota_1 }=  \frac{c\rho^d}{2}\,\e^{-\iota_1 }.
$$
This implies
$$ \sup_{0 < \e < \e_0} \sup_{0 \leq t \leq T_1 |\ln \e|^{1/2}} |V_{+,+}(t)|_{L^2} = \infty,$$
 which contradicts to \eqref{ap:dot-u} because $|V_{+,+}(t)|_{L^2} \leq C |(\dot u,\dot
v)(\e^{1/4}t)|_{L^2}.$


\begin{appendix}

\section{Symbols and operators}

In this appendix, we recall some definitions and properties for the symbols and pseudo-differential (including para-differential) operators that we used in this paper. This is a short version of Section 6.1 in \cite{em4}, where one can find the details and proofs which we omit here.

Given $m \in \R,$ we denote $S^m$ the set of matrix-valued symbols with finite spatial regularity
$a \in C^{\bar s}\big(\R^d_x; C^\infty(\R^d_\xi)\big),$ such that for all
$\a \in \N^d$ with $|\a| \leq \bar s,$ for all $\b \in \N^d,$ there exists
some $C_{\a\b} > 0$ such that for all $(x,\xi),$
 $$ |\d_x^\a \d_\xi^\b a(x,\xi)| \leq C_{\a\b} \langle \xi \rangle^{m - |\b|}, \qquad \langle \xi \rangle := (1 + |\xi|^2)^{1/2}.$$
The regularity index $\bar s$ is  determined by the regularity of the approximate solution $(u^a,v^a)$. Motivated by Remark \ref{choi-s}, we let ${\bar
s}> d/2+d+1+(q_0+3)/4$ in the definition of $S^m$.

Given $a \in S^m,$ the definitions for the associated family of pseudo-differential operators and para-differential operators  in both classical and semi-classical quantization are given in Section 6.1 of \cite{em4}, and we do not repeat here; one can also check Bony \cite{Bony} and H\"omander \cite{Hom3}. We recall the following remark:
\begin{rem} \label{rem:para}  The classical symbol of $\ope(a)$ is
 $$ \begin{aligned} (x,\xi) \to & \Big( {\cal F}^{-1} \psi \star \tilde a\Big)\left(\frac{x}{\e}, \e \xi\right) = \int {\cal F}^{-1} \psi(y, \e \xi) a(x - \e y, \e \xi) \, dy.\end{aligned}$$
\end{rem}
For the following proposition, the first result is deduced from Theorem 18.8.1 in H\"ormander \cite{Hom3} and  a simple proof for the second result can be found in Hwang \cite{Hw}.
 \begin{prop}[Action] \label{prop:action} Given $a \in S^m,$ we have
 \begin{itemize} \item There holds for all $u \in L^2$ the bound
  $$| \op_\e^\psi(a) u |_{L^2}+ | \op_\e(a) u |_{L^2} \leq C_d \sum_{|\a| \leq d+1} \sup_{\xi \in \R^d} | \d_x^\a a(\cdot,\xi) |_{L^1(\R^d_x)} | u |_{L^2}.$$
 \item For all $m, s \in \R, k \in \N,$ there exits $C = C(m,s,k) > 0$ such that for all $u \in H^{s+m}_\e$ there holds
 $$ \|\op_\e(a) u\|_{s} \leq C {\bf M}^m_{d,d}(a) \|u\|_{\e,s+m},\quad\|\op_\e^\psi(a) u\|_{s} \leq C {\bf M}^m_{0,d}(a) \|u\|_{\e,s+m},$$
   where
$$\displaystyle{{\bf M}^m_{k,k'}(a): = \sup_{\begin{smallmatrix}
(x,\xi) \in \R^d \times \R^d \\ |\a| \leq k, |\b| \leq k'
\end{smallmatrix}} \langle \xi \rangle^{-(m - |\b|)} |\d_x^\a
\d_\xi^\b a(x,\xi)|}.$$
 \end{itemize}
 \end{prop}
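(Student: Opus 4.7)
The plan is to reduce both inequalities to standard $L^2$-boundedness estimates for classical pseudo-differential operators with $\e=1$, then invoke a Schur-test argument on the Fourier side for the first bound and the symbolic calculus for the second. The paradifferential variant $\op_\e^\psi$ is handled by invoking Remark~\ref{rem:para}, which expresses its classical symbol as a convolution of $a$ with $\cF^{-1}\psi$ in the $\xi$-variable.

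First I would establish the $L^2 \to L^2$ bound with the $L^1$-in-$x$ seminorms. Setting $v(x) := u(\e x)$ and $\tilde a(x,\xi) := a(\e x,\xi)$, a direct check gives $(\op_\e(a) u)(\e x) = (\op_1(\tilde a) v)(x)$; both sides of the target inequality transform by the same power of $\e$, and the $L^1_x$ seminorms scale consistently, so it suffices to treat $\e=1$. On the Fourier side,
\be
\widehat{\op_1(\tilde a) v}(\eta) = (2\pi)^{-d} \int \hat{\tilde a}_1(\eta-\xi,\xi)\, \hat v(\xi)\, d\xi, \nn
\ee
where $\hat{\tilde a}_1$ denotes the Fourier transform in the $x$-slot. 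Integration by parts in $x$ yields, for $2N \geq d+1$,
\be
|\hat{\tilde a}_1(\eta-\xi,\xi)| \leq C (1+|\eta-\xi|^2)^{-N} \sum_{|\a| \leq 2N} |\d_x^\a \tilde a(\cdot,\xi)|_{L^1_x}, \nn
\ee
so that the integral in $\eta-\xi$ of this kernel is finite uniformly in $\xi$, and vice versa. The Schur test applied to $K(\eta,\xi) := \hat{\tilde a}_1(\eta-\xi,\xi)$, combined with Plancherel, produces the required bound for $\op_\e(a)$. For $\op_\e^\psi(a)$, the classical symbol from Remark~\ref{rem:para} is a Schwartz-class convolution of $a$ in $\xi$; its $L^1$-in-$x$ seminorms of $\d_x^\a$-derivatives are dominated by those of $a$ itself, and the same Schur argument applies.

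For the second inequality, I would use the symbolic calculus to decompose
\be
\op_\e(a) = \Lambda_\e^{-s} \Big( \Lambda_\e^{s} \op_\e(a) \Lambda_\e^{-s-m} \Big) \Lambda_\e^{s+m}, \qquad \Lambda_\e^r := \op_\e(\langle\xi\rangle^r), \nn
\ee
reducing the Sobolev bound to showing that the middle operator, whose symbol lies in $S^0$ by composition, is bounded on $L^2$ with norm controlled by ${\bf M}^m_{d,d}(a)$. The asymptotic expansion of the composition produces a symbol in $S^0$ whose ${\bf M}^0_{d,d}$ seminorms are bounded by ${\bf M}^m_{d,d}(a)$ after finitely many $x$- and $\xi$-derivatives; the first part of the proposition, applied with $d+1$ derivatives, then concludes. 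For the paradifferential variant, the cutoff $\psi$ in $\op_\e^\psi$ removes the need for $\xi$-derivatives in the composition formula (no paraproduct on frequency), so only ${\bf M}^m_{0,d}(a)$ appears.

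The main difficulty is not conceptual but bookkeeping: one must verify that the rescaling $x \to \e x$, the symbolic composition, and the convolution representation of the paradifferential symbol all yield constants independent of $\e$ and of $u$. The paper's reference to H\"ormander~\cite{Hom3} and Hwang~\cite{Hw} is essentially an acknowledgment that the result is standard once these reductions are in place, and that the optimal number of derivatives required ($d+1$ in $x$, $d$ in $\xi$) follows from the sharp form of these classical estimates.
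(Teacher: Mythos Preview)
The paper does not actually prove this proposition: it simply cites Theorem~18.8.1 of H\"ormander~\cite{Hom3} for the first bound and Hwang~\cite{Hw} for the second, referring for details to Section~6.1 of~\cite{em4}. Your Schur-test argument for the first bound is a correct and standard realization of what those references contain (the rescaling to $\e=1$ is not strictly needed --- the Schur test applied directly to the kernel $K_\e(\eta,\xi)=\hat a_1(\eta-\xi,\e\xi)$ gives an $\e$-independent constant --- but the idea is right).

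There is, however, a genuine gap in your treatment of the second bound. You propose to sandwich with $\Lambda_\e^{\pm s}$ and $\Lambda_\e^{-s-m}$ to reduce to an $S^0$ symbol, and then apply the first part of the proposition. But the first part controls $\|\op_\e(b)\|_{L^2\to L^2}$ by $\sum_{|\a|\leq d+1}\sup_\xi |\d_x^\a b(\cdot,\xi)|_{L^1_x}$, whereas the hypothesis ${\bf M}^m_{d,d}(a)<\infty$ only gives $L^\infty_x$ control on the composed symbol. A symbol in $S^0$ need not have $\d_x^\a b(\cdot,\xi)\in L^1(\R^d_x)$ at all, so the first bound is simply not applicable here. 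What is actually needed for the second inequality is a Calder\'on--Vaillancourt type theorem (this is precisely the content of Hwang's paper), which bounds $\|\op(b)\|_{L^2\to L^2}$ by $L^\infty$ seminorms ${\bf M}^0_{k,k'}(b)$; this is a separate and more delicate result than the $L^1_x$-based Schur estimate. Your $\Lambda$-sandwich reduction is fine, but the endpoint input must be Hwang/Calder\'on--Vaillancourt, not part one.
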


We give two para-linearization estimates which are derived from Proposition 5.2.2 and Theorem 5.2.8 in \cite{M}.
 \begin{prop}  For any $r \in \N^*,$ $s \geq r,$ given $a \in H^s,$ there exists $C>0$ such that for all $u \in L^\infty,$
  \be  \big\| \big( a - \op^\psi_\e(a)\big) u \big\|_{\e,s} \leq C \| (\e \d_x)^r a \|_{\e,s-r} \|u\|_{L^\infty},\nn
  \ee
  and for all $u \in L^2,$
  \be  \big\| \big( a - \op^\psi_\e(a)\big) u \big\|_{\e,s} \leq C|(\e\d_x)^s a|_{L^\infty} \|u\|_{L^2},\nn \ee
 \end{prop}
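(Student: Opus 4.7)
The plan is to combine the Bony decomposition of the pointwise product with semiclassical rescaling, so that the two bounds reduce to the classical paradifferential estimates of Proposition 5.2.2 and Theorem 5.2.8 in \cite{M}. First I would write
\be
  a u \;=\; \op^\psi_\e(a)\, u \,+\, \op^\psi_\e(u)\, a \,+\, R_\e(a,u),
  \nn
\ee
where $R_\e$ is the Bony remainder defined by the complementary dyadic pairs: in the semiclassical frequency variable $\e\xi$, the operator $\op^\psi_\e(a)$ picks up the contributions where the frequency of $a$ is smaller than that of $u$ (up to the support of the admissible cut-off $\psi$), $\op^\psi_\e(u)$ picks up the symmetric ones, and $R_\e$ sums the near-diagonal pairs. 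Consequently
\be
  \bigl(a - \op^\psi_\e(a)\bigr) u \;=\; \op^\psi_\e(u)\, a \,+\, R_\e(a,u),
  \nn
\ee
and it is enough to control these two terms in the semiclassical norm $\|\cdot\|_{\e,s}$.

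For the first inequality (with $u\in L^\infty$), I would estimate $\op^\psi_\e(u)\,a$ by distributing all spatial derivatives onto $a$: since in this block $u$ is spectrally localized at frequencies low compared to the frequencies of $a$, any $r$ of the $s$ powers of $\e\d_x$ falling on $\op^\psi_\e(u) a$ can be placed entirely on $a$, giving
\be
  \bigl\| \op^\psi_\e(u)\, a \bigr\|_{\e,s}
  \;\leq\; C\, \|u\|_{L^\infty}\, \bigl\|(\e\d_x)^r a\bigr\|_{\e,s-r}.
  \nn
\ee
For $R_\e(a,u)$ I would use that the hypothesis $s\geq r$ allows us to absorb one factor of $(\e\d_x)^r$ on $a$ and leave $u$ in $L^\infty$; because the near-diagonal sum is unconditionally convergent for $s\geq r\geq 1$ (this is precisely where $r\in\N^\ast$ is used), the same bound follows. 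For the second inequality (with $u\in L^2$), I would keep the same splitting but measure $a$ in $L^\infty$ and $u$ in $L^2$: each dyadic block of $a$ at semiclassical frequency $\sim 2^j$ is bounded by $2^{-js}\|(\e\d_x)^s a\|_{L^\infty}$, and summing against the $L^2$ blocks of $u$ gives the announced estimate. This is exactly the classical ``bad product'' estimate (Theorem 5.2.8 in \cite{M}) after rescaling.

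The only genuine point to verify is that the two results cited from \cite{M}, which are stated at unit scale for the classical paradifferential calculus, transfer verbatim to the semiclassical setting. I would handle this by the dilation $x\mapsto x/\e$: under this change of variable $\d_x$ becomes $\e^{-1}\d_x$, so $(\e\d_x)^r$ plays the role of $\d_x^r$ and $\|\cdot\|_{\e,s}$ becomes $\|\cdot\|_{H^s}$; moreover the admissible cut-off $\psi(\eta,\xi)$ used in $\op^\psi_\e$ depends only on $\e\xi$, so after rescaling it becomes the standard admissible cut-off used in \cite{M}. Hence the classical estimates apply and, reading the resulting inequalities back in the original variables, yield the two claimed bounds. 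The main (and only) obstacle is thus bookkeeping of the $\e$-powers in the rescaling; there is no new analytic difficulty.
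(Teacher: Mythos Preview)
Your proposal is correct and aligns exactly with the paper's own treatment: the paper does not give a proof at all, merely stating that the two estimates ``are derived from Proposition 5.2.2 and Theorem 5.2.8 in \cite{M}.'' Your sketch supplies precisely the missing step the paper leaves implicit---the Bony decomposition and the semiclassical rescaling $x\mapsto x/\e$ that converts $\|\cdot\|_{\e,s}$ and $(\e\d_x)^r$ into their classical counterparts---so your approach is the intended one, only with more detail than the paper provides.
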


We give the composition estimate which is derived from Theorem 6.1.4 of \cite{M}:
 \begin{prop}[Composition of para-differential operators]  \label{prop:composition}
 There holds for all $m_1, m_2, r \in \N^*,$
 $$ \op^\psi_\e(a_1) \op^\psi_\e(a_2) u = \op^\psi_\e\big( a_1 \sharp_\e a_2\big) u + \e^{r} R^\psi_r(a_1,a_2),$$
 with the notation
$$
 a_1 \sharp_\e a_2 = \sum_{|\a| < r} \e^{|\a|} \frac{(-i)^{|\a|}}{\a!} \d_\xi^\a a_1 \d_x^\a a_2,
$$
 and for some $ d^* \leq 2d + r + 1,$  for all $s \in \R,$ some $C = C(m_1,m_2,d,s,r) > 0,$ for all $u \in H^{s + m_1 + m_2 - r}$, there holds
 $$ \big\| R^\psi_r(a_1,a_2) u \big\|_{\e,s} \leq  C  \Big( {\bf M}^{m_1}_{0,d^*}(a_1) {\bf M}^{m_2}_{r,d^*}(a_2) + {\bf M}^{m_1}_{r,d^*}(a_1) {\bf M}^{m_2}_{0,d}(a_2)\Big) \|u\|_{\e,s + m_1 + m_2 -r}.$$
 \end{prop}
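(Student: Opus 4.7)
The plan is to reduce the statement to the classical (non-semi-classical) paradifferential composition theorem (Theorem 6.1.4 of \cite{M}) via a dilation argument. I would introduce the unitary dilation $(\mathcal{D}_\e v)(x) := \e^{d/2} v(\e x)$ on $L^2(\R^d)$, which satisfies $\|\mathcal{D}_\e v\|_{H^s} = \|v\|_{\e,s}$ for every $s \in \R$. From Remark \ref{rem:para} together with the analogous formula for the classical quantization, one verifies directly that
\be
\op^\psi_\e(a) = \mathcal{D}_\e^{-1}\, \op^\psi(\tilde a)\, \mathcal{D}_\e, \qquad \tilde a(x,\xi) := a(\e x,\xi),
\nn\ee
so that the semi-classical composition is nothing but the conjugation by $\mathcal{D}_\e$ of the classical composition of $\op^\psi(\tilde a_1)\op^\psi(\tilde a_2)$.

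Applying Theorem 6.1.4 of \cite{M} to the classical pair yields $\op^\psi(\tilde a_1)\op^\psi(\tilde a_2) = \op^\psi(\tilde a_1\sharp \tilde a_2) + R_{\mathrm{cl}}$, with leading symbol $\tilde a_1 \sharp \tilde a_2 = \sum_{|\alpha|<r} \tfrac{(-i)^{|\alpha|}}{\alpha!}\partial_\xi^\alpha \tilde a_1 \,\partial_x^\alpha \tilde a_2$ and a remainder $R_{\mathrm{cl}}$ of order $m_1+m_2-r$. The chain rule gives $\partial_x^\alpha \tilde a(x,\xi) = \e^{|\alpha|}(\partial_x^\alpha a)(\e x,\xi)$ and $\partial_\xi^\beta \tilde a(x,\xi) = (\partial_\xi^\beta a)(\e x,\xi)$, from which one reads off $\tilde a_1 \sharp \tilde a_2 = \widetilde{a_1 \sharp_\e a_2}$ with $a_1\sharp_\e a_2$ the symbol of the proposition; conjugating back by $\mathcal{D}_\e$ then identifies $\e^r R^\psi_r(a_1,a_2)$ with $\mathcal{D}_\e^{-1}R_{\mathrm{cl}}\mathcal{D}_\e$ and produces the stated expansion for $\op^\psi_\e(a_1)\op^\psi_\e(a_2)$.

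The final and most delicate step is the remainder bound, which I expect to be the main obstacle. Because $\mathcal{D}_\e$ intertwines $\|\cdot\|_{H^s}$ and $\|\cdot\|_{\e,s}$ isometrically, it suffices to show that Metivier's estimate for $R_{\mathrm{cl}}$ produces precisely the factor $\e^r$ times the seminorms of $a_1,a_2$ displayed in the statement. This is where the careful form of Theorem 6.1.4 of \cite{M} matters: its remainder estimate is homogeneous of order $r$ in $x$-derivatives on one of the two symbols (the lower-order terms having been absorbed in the $\sharp$-expansion), so each occurrence of ${\bf M}^{m_i}_{r,d^*}(\tilde a_i)$ gains exactly $\e^r$ via $\partial_x^\alpha \tilde a = \e^{|\alpha|}\widetilde{\partial_x^\alpha a}$ at $|\alpha|=r$; this $\e^r$ cancels against the prefactor isolated in the statement and leaves the required bound in the mixed form ${\bf M}^{m_1}_{0,d^*}(a_1){\bf M}^{m_2}_{r,d^*}(a_2)+{\bf M}^{m_1}_{r,d^*}(a_1){\bf M}^{m_2}_{0,d}(a_2)$. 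The bookkeeping of the index $d^*$ and the asymmetry between $d$ and $d^*$ in the second term is inherited from the analogous bookkeeping in \cite{M} and requires no new idea beyond a symmetric splitting of the Taylor-remainder representation of $R_{\mathrm{cl}}$ between the two symbols.
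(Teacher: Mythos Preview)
The paper does not actually give a proof of this proposition: it merely states that the result ``is derived from Theorem 6.1.4 of \cite{M}'' and refers back to Section 6.1 of \cite{em4} for details. Your dilation argument is precisely the standard way to carry out that derivation, and the reduction $\op^\psi_\e(a) = \mathcal{D}_\e^{-1}\op^\psi(\tilde a)\,\mathcal{D}_\e$ together with the chain rule identity $\widetilde{a_1\sharp_\e a_2} = \tilde a_1\sharp\tilde a_2$ is correct.

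One point deserves a sharper statement. Your claim that ``each occurrence of ${\bf M}^{m_i}_{r,d^*}(\tilde a_i)$ gains exactly $\e^r$'' is not literally true at the level of the seminorm, since ${\bf M}^{m_i}_{r,d^*}(\tilde a_i)$ involves $\partial_x^\alpha\tilde a_i$ for all $|\alpha|\le r$, and for $|\alpha|<r$ the gain is only $\e^{|\alpha|}$. The factor $\e^r$ must come from the \emph{structure} of M\'etivier's remainder, which is an integral Taylor remainder at exact order $r$ in $x$ (so every term contains a factor $\partial_x^\alpha$ with $|\alpha|=r$ acting on one of the two symbols before any seminorm is taken). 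You say this in words, but when writing the proof you should apply the rescaling to that integral representation of $R_{\mathrm{cl}}$ directly, extract $\e^r$, and only then bound the resulting expression by the mixed seminorms; merely invoking M\'etivier's final seminorm inequality on $\tilde a_1,\tilde a_2$ would lose the $\e^r$. With that adjustment your outline is complete and matches what the paper intends by ``derived from Theorem 6.1.4 of \cite{M}.''
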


For the composition of a Fourier multiplier and a scalar function,
we have the following proposition. The proof is rather direct.
\begin{prop}\label{commu} Given any $u\in H^s,~s>0$ and any semi-classical Fourier multiplier $\s(\e D_x)$ the symbol of which satisfies
$$\s(\xi)\in C^{1}(\R^d),~~~~\|(\s,\nabla\s)\|_{L^\infty}< +\infty,$$
we have
$$
\big\|\s(\e D_x)u\|_{\e,s}\leq C\|\s\|_{L^\infty} \|u\|_{\e,s}. $$
Given any scalar function $g(x)\in H^{s+d/2+1+\eta}$ for some $\eta>0$, we have
the estimate:
$$
\big\|[\s(\e D_x),g(x)]u\big\|_{\e,s}\leq \e
C\|g\|_{H^{s+d/2+1+\eta}}\|\nabla \s\|_{L^\infty} \|u\|_{\e,s}. $$
The constant $C$ is independent of $\s$ and $g$.

\end{prop}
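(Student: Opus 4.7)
My plan is to work on the Fourier side throughout, using Plancherel together with Young's convolution inequality and a Peetre-type weight splitting.

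\textbf{First bound.} Since $\sigma(\e D_x)$ acts as multiplication by $\sigma(\e\xi)$ on the Fourier side, Plancherel gives directly
\[
\|\sigma(\e D_x)u\|_{\e,s}^2 = \int (1+|\e\xi|^2)^s |\sigma(\e\xi)|^2 |\hat u(\xi)|^2\,d\xi \leq \|\sigma\|_{L^\infty}^2 \|u\|_{\e,s}^2.
\]
This is the easy part.

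\textbf{Second bound.} I would expand the commutator on the Fourier side:
\[
\widehat{[\sigma(\e D_x),g]u}(\xi) = \int \bigl(\sigma(\e\xi)-\sigma(\e\eta)\bigr)\hat g(\xi-\eta)\,\hat u(\eta)\,d\eta.
\]
The mean value theorem gives $|\sigma(\e\xi)-\sigma(\e\eta)| \leq \e \|\nabla\sigma\|_{L^\infty}|\xi-\eta|$, so with $\tau=\xi-\eta$ we have the pointwise bound
\[
|\widehat{[\sigma(\e D_x),g]u}(\xi)| \leq \e\|\nabla\sigma\|_{L^\infty}\int |\tau|\,|\hat g(\tau)|\,|\hat u(\xi-\tau)|\,d\tau.
\]
Next I apply the Peetre-type inequality, valid for $s>0$,
\[
(1+|\e\xi|^2)^{s/2} \leq C_s\bigl((1+|\e\tau|^2)^{s/2} + (1+|\e(\xi-\tau)|^2)^{s/2}\bigr),
\]
which splits $(1+|\e\xi|^2)^{s/2}|\hat F(\xi)|$ (with $\hat F$ the dominating convolution above) into two convolutions. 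Young's inequality $\|f*h\|_{L^2}\leq \|f\|_{L^1}\|h\|_{L^2}$ then yields
\[
\|F\|_{\e,s} \leq C_s\bigl(\||\cdot|(1+|\e\cdot|^2)^{s/2}\hat g\|_{L^1}\|u\|_{L^2} + \||\cdot|\hat g\|_{L^1}\|u\|_{\e,s}\bigr).
\]

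\textbf{Controlling the $L^1$ norms of $\hat g$.} Using $(1+|\e\tau|^2)^{s/2}\leq (1+|\tau|^2)^{s/2}$ for $\e\leq 1$ and $|\tau|\leq (1+|\tau|^2)^{1/2}$, both $L^1$ norms are controlled by $\int (1+|\tau|^2)^{(s+1)/2}|\hat g(\tau)|\,d\tau$. Writing this as an inner product and applying Cauchy--Schwarz with the weight $(1+|\tau|^2)^{-(d/2+\eta)/2}$ gives
\[
\int (1+|\tau|^2)^{(s+1)/2}|\hat g(\tau)|\,d\tau \leq \|g\|_{H^{s+d/2+1+\eta}}\cdot\Bigl(\int (1+|\tau|^2)^{-(d/2+\eta)}d\tau\Bigr)^{1/2},
\]
where the last integral is finite precisely because $\eta>0$. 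Combining all three estimates (and using $\|u\|_{L^2}\leq \|u\|_{\e,s}$ when $s\geq 0$) yields the desired bound. The constant $C$ depends only on $s,d,\eta$ and is independent of $\sigma$ and $g$.

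\textbf{Main obstacle.} There is no real obstacle here; the entire argument is bookkeeping. The one point requiring care is the Peetre splitting, which forces us to estimate two separate convolutions: one transfers the $\e$-weight onto $\hat g$ (hence the $(1+|\e\cdot|^2)^{s/2}$ factor in the $L^1$ norm) and one leaves it on $\hat u$ (producing the $\|u\|_{\e,s}$ factor directly). The Sobolev index $s+d/2+1+\eta$ appears because the Cauchy--Schwarz step needs an integrable weight in $d$ dimensions, which costs $d/2+\eta$ derivatives beyond the $s+1$ already forced by the weight and the factor $|\tau|$.
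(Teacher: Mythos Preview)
Your proof is correct. The paper itself omits the proof entirely, remarking only that it is ``rather direct,'' and your Fourier-side argument---mean-value bound on the symbol difference, Peetre splitting of the weight, Young's inequality, and Cauchy--Schwarz to control the weighted $L^1$ norm of $\hat g$---is precisely the standard direct computation the author has in mind.
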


\section{Bounds for the symbolic flow}\label{app:symb-bound}
Our goal in Appendix B is to prove Proposition \ref{est-flow-sym}.
We first consider the case $(i,j)=(+,+)$ and we we reproduce
below the proposition.
\begin{prop}\label{est-flow-sym-app}
For all $0 \leq \t \leq t \leq T_1 |\ln \e|^{1/2}$, all $(x,\xi)$
and all $\a \in \N^d$ with $|\a|\leq d+1+(q_0+3)/4$,  the solution  to \eqref{S} with $(i,j)=(+,+)$
satisfies
$$
 |\d_x^\a S_0^{+,+}(\t;t)| \leq
C|\ln\e|^{\a/2}\exp(C(1+|\a|)|\ln\e|^{1/2})\exp\big((t^2-\t^2)
\g^+/2\big).
 $$
\end{prop}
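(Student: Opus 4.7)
The plan is to follow the strategy sketched in Section 1.3.3. First, I would unpack the structure of $M_{+,+}(t)$. The diagonal part $i\chi^{(1)}_{[+,+]} \mathcal{A}$ is purely imaginary; the coupling blocks involve $u_{0,\pm 3}(\epsilon^{1/4}t, x)$, and since $u_{0,3}(0,\cdot) \equiv 0$ by Proposition \ref{prop:wkb}, Taylor expansion gives $u_{0,\pm 3}(\epsilon^{1/4} t, x) = \epsilon^{1/4} t \, \partial_t u_{0,\pm 3}(0,x) + O(\epsilon^{1/2} t^2)$. After dividing by $\epsilon^{3/4}$, the leading off-diagonal block becomes $t$ times an $\epsilon$-independent matrix (built from $\partial_t g(0,x)$ and the interaction coefficients $\tilde b_{12}, \tilde b_{21}$), while the quadratic remainder contributes a perturbation of order $\epsilon^{1/4} t^2$, which is negligible for $t \leq T_1|\ln\epsilon|^{1/2}$. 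Denote this principal matrix by $\widetilde{\mathcal M}(t,x,\xi)$. A direct computation along the lines of \eqref{intro-ev} shows that its eigenvalues consist of three purely imaginary ones together with a coupled $2\times 2$ block of eigenvalues $\nu_\pm$ whose real parts are bounded by $t\,({\rm tr}\,\tilde b_{12}\tilde b_{21})^{1/2} \leq t\,\gamma^+$ on $\supp \chi_{[+,+]}$, by definition \eqref{def:g+} of $\gamma^+$.

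Next, I would establish the pointwise $L^\infty$ bound for the flow $\widetilde S_0$ of $\partial_t \widetilde S_0 + \epsilon^{-3/4}\widetilde{\mathcal M}(t)\widetilde S_0 = 0$. The main tool is diagonalization wherever the eigenvalues are separated. Writing $\widetilde{\mathcal M}(t) = \sum_j \gamma_j(t)\Theta_j(t)$, projecting the ODE onto $\Theta_j(t)$ and applying Duhamel yields
\[
\Theta_j(t)\widetilde S_0(\tau;t) = e^{-\epsilon^{-3/4}\int_\tau^t \gamma_j}\Theta_j(\tau) + \int_\tau^t e^{-\epsilon^{-3/4}\int_{t'}^t \gamma_j}(\partial_t\Theta_j)(t')\widetilde S_0(\tau;t')\,dt'.
\]
Since the real parts of $-\gamma_j/\epsilon^{3/4}$ are at most $\gamma^+ t$, the leading factor is bounded by $\exp((t^2-\tau^2)\gamma^+/2)$. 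The projectors $\Theta_j$ are smooth away from eigenvalue crossings but $\partial_t \Theta_j$ can be of order $1/t$ near $t=0$. To absorb this singularity, I would perform the time shift $\widetilde S_1(\tau;t) := \widetilde S_0(\tau;t+c_0)$ for a small fixed $c_0 > 0$ so that $\partial_t \Theta_j(\cdot+c_0)$ is uniformly bounded by $C/c_0$; then Gronwall gives $|\widetilde S_1(\tau;t)| \leq C\exp((t^2-\tau^2)\gamma^+/2)\exp(Ct)$. On the short interval $[0,c_0]$, a rough bound using $|\widetilde{\mathcal M}|_{L^\infty} \leq C$ controls $\widetilde S_0(\tau;c_0)$ by a $(c_0,\gamma^+)$-dependent constant. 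Since $t \leq T_1|\ln\epsilon|^{1/2}$, the extra factor $\exp(Ct)$ is absorbed into $\exp(C|\ln\epsilon|^{1/2})$ in the final bound.

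At points where $\widetilde{\mathcal M}(t)$ is not diagonalizable (the coupled block has coalescing eigenvalues, which happens only on a curve in $(t,\xi)$-space), the diagonalization strategy fails and one instead uses the pointwise conjugation described in the introduction: there exists an invertible $P(x,\xi)$, independent of $t$, with $|P|+|P^{-1}| \leq C(c_0)$, such that the Hermitian part of $PM_0 P^{-1}$ is bounded by $c_0 t \epsilon^{3/4}$. A direct $L^2$ estimate for $P\widetilde S_0$ then yields an exponential bound with $c_0$ in place of $\gamma^+$, and the choice $c_0 \leq \gamma^+$ absorbs it into the target estimate. By continuity in $(x,\xi)$, the two regimes glue and the $L^\infty$ bound on $\widetilde S_0$ holds globally; transferring back to $S_0^{+,+}$ via the $O(\epsilon^{1/4} t^2)$ remainder from step one settles the case $\alpha = 0$.

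Finally, for spatial derivatives, I would differentiate \eqref{S} in $x$ inductively and treat $[\partial_x^\alpha, \epsilon^{-3/4} M_{+,+}(t)]S_0^{+,+}$ as an inhomogeneity. Each $\partial_x$ hitting the coupling block of $M_{+,+}$ produces a factor $\epsilon^{-3/4}\cdot\sqrt\epsilon\cdot\epsilon^{1/4}t = \epsilon^{0}\cdot t$ (since the coupling is of size $\sqrt\epsilon\,u_{0,\pm 3}(\epsilon^{1/4}t)$ with $u_{0,\pm 3}(0)=0$), while each $\partial_x$ hitting the diagonal part contributes an $O(1)$ factor. Applying Duhamel with the $L^\infty$ bound at level $|\alpha|-1$ and integrating over $[0,t]$ with $t \leq T_1|\ln\epsilon|^{1/2}$ produces one factor of $|\ln\epsilon|^{1/2}\exp(C|\ln\epsilon|^{1/2})$ per derivative, giving the announced $|\ln\epsilon|^{\alpha/2}\exp(C(1+|\alpha|)|\ln\epsilon|^{1/2})$ prefactor. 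The hard part will be the bookkeeping in the non-diagonalizable regime: constructing $P$ uniformly in $(x,\xi)$ in a neighbourhood of the crossing curves and verifying the Hermitian-part bound $|{\rm Re}(PM_0P^{-1})| \leq c_0 t\epsilon^{3/4}$ with $c_0$ small but fixed, independent of $\epsilon$; this is where Appendix B concentrates the technical work.
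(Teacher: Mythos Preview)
Your proposal is correct and follows essentially the same approach as the paper's Appendix B, which implements the sketch of Section 1.3.3 that you are tracking. The only refinement in the paper is a finer four-way split of your two regimes---small coupling $\tr\,\tilde b_{12}\tilde b_{21}<c_0$ (crude energy bound), a neighbourhood $G_1$ of the coalescence curve (Schur triangularization plus diagonal rescaling, giving a small rate $Cc_0^{1/30}$), a neighbourhood $G_2$ of the resonance $\lambda_1=\mu$ (your diagonalization-plus-time-shift argument, where $\gamma^+$ is attained), and the remaining region $G_3$---with the pieces combined by taking the pointwise maximum after fixing $c_0$, rather than by continuity; your Taylor reduction and the induction on $|\alpha|$ match Sections B.1 and B.3.
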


\subsection{Preparation}\label{sec:prepar}
By \eqref{B12-ij} and \eqref{M-symbols}, the matrix $M_{+,+}$ is
\be\label{m01}
\begin{split}M_{+,+}&=\chi_{[+,+]}^{(1)} \bp i\l_1 &0 &0&-\sqrt\e b_{12}&0&0\\
0&i\l_2&0&0&0&0\\0&0&0&0&0&0\\-\sqrt\e
b_{21}&0&0&i\mu&0&0\\0&0&0&0&-i\mu&0\\0&0&0&0&0&0 \ep\in
\C^{18\times 18}
\end{split}\ee
denoting \be\label{l1-l2}\l_1(\cdot) = \l(\cdot + 3k) - 3\o, \qquad \l_2(\cdot) =
-\l(\cdot -3k)+3\o\ee  and
$$ \begin{aligned} b_{12}(t,x,\xi) & := \chi_{[+,+]}(\xi) \varphi_1(x)
 P_+(\xi + 3k) F\big(u_{0,3}(\e^{1/4}t,x)\big) Q_+(\xi) \in \C^{3 \times 3}, \\
 b_{21}(t,x,\xi) & := 2 \chi_{[+,+]}(\xi)  \varphi_1(x) Q_+ (\xi) G(u_{0,-3}(\e^{1/4}t,x)) P_+(\xi +3k) \in \C^{3 \times 3} \end{aligned}.$$
Up to a change of order for the volumes of $S_0^{+,+}$, it is equivalent to rewrite
\be\label{m01-new}
M_{+,+}=\chi_{[+,+]}^{(1)} \bp i\l_1 &-\sqrt\e b_{12}&0 &0&0&0\\-\sqrt\e
b_{21}&i\mu&0&0&0&0\\0&0&0&0&0&0\\
0&0&0&i\l_2&0&0\\0&0&0&0&-i\mu&0\\0&0&0&0&0&0 \ep\in
\C^{18\times 18}.
\ee

 By reality of $\l$ and $\mu$, and the fact that $b_{12}$ and $b_{21}$ vanish identically outside
 $\mbox{supp}\,\varphi_1 \times \mbox{supp}\, \chi_{[+,+]},$
 and that $\chi_{[+,+]}^{(1)}(\xi)=1$ for any $\xi\in\supp\,\chi_{[+,+]}$, it suffices to prove the following estimate for $(x,\xi)\in \mbox{supp}\,\varphi_1 \times \mbox{supp}\,
 \chi_{[+,+]}$:
 \begin{equation} \label{bd:widetildeS}
 |\d_x^\a \widetilde{S}(\t;t)| \leq C|\ln\e|^{\a/2}\exp\big(C(1+|\a|)|\ln\e|^{1/2}\big)\exp\big((t^2-\t^2) \g^+/2\big),
 \end{equation} where $\widetilde{S}$ solves
\be\label{eq:widetildeS} \d_t\widetilde S+\e^{-3/4}\widetilde
M\widetilde S=0,\quad \widetilde S(\t;\t)=\Id \ee with $\widetilde
M$ defined as
 \be \widetilde M := \bp
i\l_1 &-\sqrt\e b_{12} \\-\sqrt\e b_{21}&i\mu\ep \in \C^{6 \times
6}.\nn\ee

By Taylor expansion with integral form remainder, there holds
$$u_{0,3}(\e^{1/4}t)=u_{0,3}(0)+\e^{1/4}t\d_t
u_{0,3}(0)+ \e^{1/2}t^2\int_0^{1} (\d_t^2
u_{0,3})(s\e^{1/4}t,x)(1-s)ds.
$$
Recall $u_{0,3}(t,x)=g(t,x)\vec e_{3}$ and
$u_{0,3}(0,\cdot)\equiv 0$, we have
\ba\label{dec:widetildeM}
&\widetilde M=M_0+\e M_1,\qquad M_0:=\bp i\l_1 &-\e^{3/4}t \tilde b_{12}
\\-\e^{3/4}t \tilde b_{21}&i\mu\ep,\\
&~~~~|\d_{x}^\a\d_{\xi}^\b
M_1(t,\cdot)|_{L^\infty_x}\leq C t^2|\d^2_t \d_{x}^\a g|_{L^\infty_{t,x}}
\ea with
\be\label{def:tilde b}
\begin{aligned} \tilde b_{12}(x,\xi) & := \chi_{[+,+]}(\xi)
\varphi_1(x)\d_tg(0,x)  P_+(\xi + 3k) F(\vec e_{3}) Q_+(\xi) , \\
 \tilde b_{21}(x,\xi) & :=  2\chi_{[+,+]}(\xi) \varphi_1(x) \d_t\bar g(0,x)Q_+(\xi) G(\vec e_{-3}) P_+(\xi +3k).
 \end{aligned}\ee
Then we can rewrite \eqref{eq:widetildeS} in the form
\be\label{eq:widetildeS2}\d_t\widetilde S+\e^{-3/4} M_0\widetilde
S=-\e^{1/4}M_1\widetilde S,\quad \widetilde S(\t;\t)=\Id.\ee To show
the estimate \eqref{bd:widetildeS}, we start from considering the the following
simpler equation, in which the small source term in
\eqref{eq:widetildeS2} is not included:
\be\label{eq:tildeS0}\d_t\widetilde S_0+\e^{-3/4} M_0\widetilde
S_0=0,\quad \widetilde S_0(\t;\t)=\Id.\ee
\begin{prop}\label{est-flow-widetildeS0}
For all $0 \leq \t \leq t$ and all $(x,\xi)\in {\rm supp}\,\varphi_1
\times {\rm supp}\,
 \chi_{[+,+]}$,  the solution
$\widetilde S_0$ to \eqref{eq:tildeS0} satisfies
$$
 |\widetilde S_0(\t;t)| \leq C \exp\big((t^2-\t^2) \g^+/2\big)\exp\big(Ct\big).
 $$
\end{prop}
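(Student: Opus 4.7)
The plan is to follow the two-regime strategy outlined in Section 1.3.3. Because $\tilde b_{12}$ and $\tilde b_{21}$ factor through the one-dimensional projectors $P_+(\xi+3k)$ and $Q_+(\xi)$, they are each of rank one. Hence on its natural two-dimensional invariant subspace the $6\times 6$ matrix $M_0(t)$ produces the two nontrivial eigenvalues
$$\nu_\pm(t,x,\xi) = \frac{i(\l_1+\mu)}{2} \pm \frac{1}{2}\bigl(4\e^{3/2}t^2\,\tr(\tilde b_{12}\tilde b_{21}) - (\l_1-\mu)^2\bigr)^{1/2},$$
while the remaining four eigenvalues are the purely imaginary $i\l_1,\,i\mu$. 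On $\supp(\chi_{[+,+]}\varphi_1)$, the trace equals $|\d_t g(0,x)|^2\g_1(\xi)$, and \eqref{lb-gamma} together with \eqref{def:g+} yields $\sup(-\Re\nu_-/\e^{3/4})=t\g^+$, which is exactly the growth rate appearing in the conclusion.

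I would then partition the $(x,\xi)$-space into a diagonalizable region $\Omega_{\rm d}$, where the eigenvalues of $M_0(t)$ remain $t$-uniformly separated on $[0,T_1|\ln\e|^{1/2}]$, and a degenerate region $\Omega_{\rm deg}$ consisting of a small neighborhood of the crossing set $\{\l_1(\xi)=\mu(\xi)\}$ where eigenprojectors may blow up. On $\Omega_{\rm d}$, $M_0(t)$ admits a smooth spectral decomposition $M_0(t)=\sum_j\g_j(t)\Theta_j(t)$. Applying $\Theta_j$ to \eqref{eq:tildeS0} produces
$$\d_t\bigl(\Theta_j\widetilde S_0\bigr) + \e^{-3/4}\g_j(t)\,\Theta_j\widetilde S_0 = \bigl(\d_t\Theta_j\bigr)\widetilde S_0,$$
which, combined with $-\Re(\g_j/\e^{3/4})\le t\g^+$ and $|\d_t\Theta_j(t)|\le C(1+1/t)$, yields the integral inequality \eqref{intro-slow-ps} by Duhamel. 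To neutralize the $1/t$ singularity at the lower endpoint, I would introduce the shifted flow $\widetilde S_1(\tau;t):=\widetilde S_0(\tau;t+c_0)$ for small fixed $c_0>0$; its eigenprojector derivatives satisfy $|\d_t\Theta_j(t+c_0)|\le C/c_0$, so summing over $j$ and applying Gronwall produces the target bound on $t\in[c_0,T_1|\ln\e|^{1/2}]$. A coarse estimate (Lemma \ref{rough-est}) handles the $\e$-independent initial window $[0,c_0]$.

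On $\Omega_{\rm deg}$ the eigenprojectors degenerate, so I would abandon the spectral approach and instead construct a $t$-independent invertible matrix $P=P(x,\xi)$, smooth in the parameters, with $|P|+|P^{-1}|\le C(c_0)$ and
$$\tfrac{1}{2}\bigl|PM_0(t)P^{-1}+\bigl(PM_0(t)P^{-1}\bigr)^*\bigr|\le c_0\,t\,\e^{3/4},$$
for $c_0\in(0,\g^+]$ chosen sufficiently small. The change of variable $\widetilde S_0^{(1)}:=P\widetilde S_0$ then satisfies an equation whose antihermitian part is bounded by $c_0 t$, and a standard $L^2$ energy estimate yields $|\widetilde S_0^{(1)}(\tau;t)|\le|P|\exp\bigl(c_0(t^2-\tau^2)/2\bigr)$; undoing the conjugation and using $c_0\le\g^+$ gives the required bound on $\Omega_{\rm deg}$. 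A smooth partition of unity in $(x,\xi)$ then glues the two estimates.

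The main obstacle will be the explicit construction of $P$ near the crossings together with the uniform control of the overlap between $\Omega_{\rm d}$ and $\Omega_{\rm deg}$. The rank-one factorization of $\tilde b_{12},\tilde b_{21}$ reduces the nontrivial dynamics to an effective $2\times 2$ model, which together with Lemma \ref{lem-det} determines the eigenstructure explicitly, and this reduction should make the conjugation and subsequent patching feasible. The additional $\exp(Ct)$ factor appearing in the conclusion provides the slack to absorb the error terms introduced by the shift $c_0$, the smoothness loss of $\Theta_j$, and the smooth partition.
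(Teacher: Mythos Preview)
Your proposal misidentifies the degenerate region. You place $\Omega_{\rm deg}$ at the resonance locus $\{\l_1=\mu\}$ and propose to handle it by a conjugation making the Hermitian part $\le c_0\,t\,\e^{3/4}$ with $c_0$ ``sufficiently small.'' But near the resonance the eigenvalues $\nu_\pm$ of $M_0(t)$ acquire real parts $\pm\e^{3/4}t\sqrt{\tr(\tilde b_{12}\tilde b_{21})}$, reaching $\e^{3/4}t\,\g^+$; since similarity preserves spectrum and the spectrum lies in the numerical range, the Hermitian part of \emph{any} $PM_0(t)P^{-1}$ has norm at least $\e^{3/4}t\,\g^+$ there. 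So ``$c_0$ small'' is impossible precisely where the growth originates. The eigenprojectors do \emph{not} blow up at $\{\l_1=\mu\}$; they blow up at the $t$-dependent coalescence locus $|\l_1-\mu|=2\e^{3/4}t\sqrt{\tr(\tilde b_{12}\tilde b_{21})}$, where $\nu_\pm$ are purely imaginary. The paper accordingly uses a $t$-dependent four-case split: small trace (rough estimate, Lemma~\ref{ga-large-chi small}); near coalescence (Schur decomposition plus diagonal scaling so the Hermitian part is genuinely $O(c_0^{1/30})$, Lemma~\ref{lem:ar coale}); near resonance (diagonalize and apply the shift trick, Lemma~\ref{lem:ar-res}); away from both (diagonalize, Lemma~\ref{lem:away}). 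Relative to this, you have effectively swapped the two methods.

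That said, for this specific model your conjugation instinct can be rescued and in fact shortcuts the paper's argument. From \eqref{proj}--\eqref{proj2} and \eqref{def:tilde b} one checks that $\tilde b_{12}=\g_1(\xi)\,\tilde b_{21}^*$ (both are rank-one with the same range and cokernel, and the scalar ratio is real positive). The single $t$-independent choice $P=\diag\big(I_3,\sqrt{\g_1(\xi)}\,I_3\big)$ then gives exactly
\[
\tfrac{1}{2}\big|PM_0(t)P^{-1}+(PM_0(t)P^{-1})^*\big|=\e^{3/4}t\sqrt{\tr(\tilde b_{12}\tilde b_{21})}\le\e^{3/4}t\,\g^+
\]
for every $(x,\xi,t)$, and Lemma~\ref{rough-est-0} yields $|\widetilde S_0(\t;t)|\le C\exp\big(\g^+(t^2-\t^2)/2\big)$ directly, even without the $\exp(Ct)$ factor. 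The correct constant is therefore $c_0=\g^+$, not ``sufficiently small''; with that correction no partition into $\Omega_{\rm d}$ and $\Omega_{\rm deg}$ is needed at all.
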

The proof is given in the next section.
We immediately have a corollary:
\begin{cor}\label{cor-widetildeS0}
For all $0 \leq \t \leq t\leq T_1 |\ln\e|^{1/2}$ and all $(x,\xi)\in
{\rm supp}\,\varphi_1 \times {\rm supp}\,
 \chi_{[+,+]}$,  the
solution $\widetilde S_0$ to \eqref{eq:tildeS0} satisfies
$$
 |\widetilde S_0(\t;t)| \leq C \exp(C|\ln\e|^{1/2})\exp\big((t^2-\t^2) \g^+/2\big).
 $$

\end{cor}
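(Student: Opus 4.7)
The plan follows the strategy sketched in Section~1.3.3 of the introduction, specialised to the $6\times 6$ matrix $M_0(t)$ of (\ref{dec:widetildeM}). I begin with a spectral computation. Since $\tilde b_{12}$ and $\tilde b_{21}$ have rank at most one, direct expansion of the characteristic polynomial of $M_0(t)$ yields four zero eigenvalues together with the pair
\[
\nu_\pm(t,x,\xi) = \frac{i}{2}(\lambda_1 + \mu) \pm \frac{1}{2}\sqrt{4\varepsilon^{3/2} t^2\, \tr(\tilde b_{12}\tilde b_{21}) - (\lambda_1 - \mu)^2}.
\]
Using (\ref{def:tilde b}), (\ref{def:gamma12})--(\ref{gamma1>0}) and (\ref{def:g+}), one has $\tr(\tilde b_{12}\tilde b_{21}) = \varphi_1(x)^2|\d_t g(0,x)|^2 \gamma_1(\xi)$ on $\supp\,\chi_{[+,+]}$, so the real part of $\nu_\pm/\varepsilon^{3/4}$ is bounded pointwise by $\gamma^+\, t$; this is the sharp rate that must appear in the final exponent.

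In the diagonalisable regime, away from a small neighbourhood of the crossing set $\{4\varepsilon^{3/2} t^2\, \tr(\tilde b_{12}\tilde b_{21}) = (\lambda_1 - \mu)^2\}$, I write the smooth spectral decomposition $M_0(t) = \sum_j \gamma_j(t)\Theta_j(t)$. Applying $\Theta_j(t)$ to (\ref{eq:tildeS0}) yields the Duhamel identity
\[
\Theta_j(t)\widetilde S_0(\t;t) = e^{-\varepsilon^{-3/4}\int_\t^t \gamma_j}\,\Theta_j(\t) + \int_\t^t e^{-\varepsilon^{-3/4}\int_{t'}^t \gamma_j}\bigl(\d_t\Theta_j(t')\bigr)\widetilde S_0(\t;t')\,dt',
\]
with kernel bounded by $\exp((t^2-t'^2)\gamma^+/2)$. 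The obstacle is that $\d_t\Theta_j(t) \sim 1/\mathrm{dist}(\nu_+(t),\nu_-(t))$, which scales like $1/t$ on the resonant set $\{\lambda_1=\mu\}$, making the integral divergent at $\t=0$. I resolve this exactly as the introduction suggests, by the change of variable $\widetilde S_1(\t;t):=\widetilde S_0(\t;t+c_0)$ for a small fixed $c_0>0$: the derivative $\d_t\Theta_j(t'+c_0)$ is then uniformly bounded by $C/c_0$, Gronwall closes the estimate for $t\geq c_0$, and on the small window $[0,c_0]$ the rough energy estimate based on $|M_0+M_0^*|\leq C\varepsilon^{3/4} t$ (the forthcoming Lemma of \emph{rough-est} type) gives $|\widetilde S_0(\t;t)|\leq \exp(C(t^2-\t^2))$, which is absorbed into the $e^{Ct}$ factor since $t\leq c_0$ there. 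Summing over $j$ via $\sum_j\Theta_j=\Id$ produces the claimed bound in this regime.

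Near the crossing set, where $M_0(t)$ fails to diagonalise smoothly, I invoke the conjugation device of the introduction: one constructs a $t$-independent invertible matrix $P$ with $|P|+|P^{-1}|\leq C(c_0)$ and $\tfrac{1}{2}|PM_0P^{-1}+(PM_0P^{-1})^*|\leq c_0\, t\,\varepsilon^{3/4}$, where $c_0$ is fixed small enough that $c_0\leq \gamma^+$. An elementary energy estimate for $\widetilde S_0^{(1)}:=P\widetilde S_0$ gives $|\widetilde S_0^{(1)}(\t;t)|\leq |P|\exp(c_0(t^2-\t^2)/2)$, hence $|\widetilde S_0(\t;t)|\leq C\exp((t^2-\t^2)\gamma^+/2)$. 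A finite partition of unity in $(x,\xi)\in \supp\,\varphi_1\times\supp\,\chi_{[+,+]}$ then patches the diagonalisable and non-diagonalisable charts to yield the full inequality $|\widetilde S_0(\t;t)|\leq C\exp((t^2-\t^2)\gamma^+/2)\exp(Ct)$. The main technical hurdle throughout is the $1/t$ singularity of $\d_t\Theta_j$ generated by the eigenvalue coalescence at $t=0$ on the resonant set, which the shift $t\mapsto t+c_0$ combined with the rough estimate on $[0,c_0]$ is designed precisely to bypass.
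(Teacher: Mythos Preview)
Your approach is essentially the paper's. In the paper the Corollary is stated as an \emph{immediate} consequence of Proposition~\ref{est-flow-widetildeS0} (simply bound $\exp(Ct)$ by $\exp(CT_1|\ln\e|^{1/2})$ when $t\le T_1|\ln\e|^{1/2}$); what you have written is in effect an outline of Proposition~\ref{est-flow-widetildeS0} itself, and it follows exactly the strategy carried out in Section~B.2 (diagonalise where possible, handle the $1/t$ singularity of $\d_t\Theta_j$ by the shift $t\mapsto t+c_0$ combined with the rough energy bound on $[0,c_0]$, and near the eigenvalue crossing use a $t$-independent conjugation that damps the Hermitian part).

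Two small corrections. First, the four ``other'' eigenvalues of $M_0$ are $i\l_1$ (twice) and $i\mu$ (twice), not zero; they are purely imaginary, so your conclusion about the real part of the spectrum is unaffected. Second, the paper does not patch the regimes by a partition of unity in $(x,\xi)$: the coalescence/resonance/away sets $G_1,G_2,G_3$ of Section~B.2 also depend on $t$ (through $|\l_1-\mu|$ versus $\e^{3/4}t\sqrt{\tr\,(\tilde b_{12}\tilde b_{21})}$), and there is an additional preliminary case $\tr(\tilde b_{12}\tilde b_{21})<c_0$ treated directly by the rough estimate. The logic is a pointwise-in-$(t,x,\xi)$ case split rather than a spatial partition, but your overall scheme and the paper's coincide.
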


\subsection{Proof of Proposition \ref{est-flow-widetildeS0}}\label{pf-proS0}

We prove Proposition \ref{est-flow-widetildeS0} step by step in the
following subsections. We recall that it suffices to consider $(x,\xi)\in {\rm
supp}\,\varphi_1 \times {\rm supp}\, \chi_{[+,+]}$, and  we will not
repeat this restriction in the following statements in Section \ref{pf-proS0}.

\subsubsection{A rough estimate}

\begin{lem}\label{rough-est}
There holds for all $0\leq \t\leq t<\infty$:
$$
 |\widetilde S_0(\t;t)| \leq \exp((t^2-\t^2) b^+/2),\quad b^+:=\sup_{x,\xi}\big(\frac{|\tilde b_{12}|+|\tilde
b_{21}|}{2}\big).
 $$

\end{lem}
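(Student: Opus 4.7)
The plan is a very direct energy estimate in the matrix variable, exploiting the skew-Hermitian diagonal of $M_0$. For any vector $v$, set $S(t) := \widetilde{S}_0(\tau;t) v$, so that $S$ solves $\partial_t S + \varepsilon^{-3/4} M_0(t,x,\xi) S = 0$ with $S(\tau) = v$. Computing in the usual way,
\begin{equation*}
\tfrac{1}{2}\partial_t |S|^2 = -\varepsilon^{-3/4}\,\mathrm{Re}\,\langle M_0 S, S\rangle = -\varepsilon^{-3/4}\,\Big\langle \tfrac{M_0+M_0^*}{2}\,S,S\Big\rangle.
\end{equation*}
The point is that the diagonal blocks of $M_0$ are $i\lambda_1\,\mathrm{Id}_3$ and $i\mu\,\mathrm{Id}_3$, which are pure imaginary and hence skew-Hermitian. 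They drop out of $(M_0+M_0^*)/2$, which therefore has the off-diagonal form
\begin{equation*}
\tfrac{M_0+M_0^*}{2} = -\varepsilon^{3/4} t \begin{pmatrix} 0 & \tfrac{\tilde b_{12}+\tilde b_{21}^*}{2} \\ \tfrac{\tilde b_{21}+\tilde b_{12}^*}{2} & 0 \end{pmatrix}.
\end{equation*}

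Next I use the elementary fact that an anti-diagonal block-Hermitian matrix $\bigl(\begin{smallmatrix} 0 & D\\ D^*&0\end{smallmatrix}\bigr)$ has operator norm $\|D\|$. Applied here with $D = -\varepsilon^{3/4} t (\tilde b_{12}+\tilde b_{21}^*)/2$, and using the triangle inequality, one gets
\begin{equation*}
\Bigl|\tfrac{M_0+M_0^*}{2}\Bigr| \leq \varepsilon^{3/4} t\, \frac{|\tilde b_{12}|+|\tilde b_{21}|}{2} \leq \varepsilon^{3/4}\,t\, b^+.
\end{equation*}
Substituting into the energy identity cancels the prefactor $\varepsilon^{-3/4}$ and yields $\partial_t |S|^2 \leq 2\,t\,b^+\,|S|^2$ pointwise in $(x,\xi)$.

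Grönwall's inequality then gives $|S(t)|^2 \leq \exp\bigl(\int_\tau^t 2 s\, b^+\,ds\bigr)|v|^2 = \exp\bigl((t^2-\tau^2)b^+\bigr)|v|^2$ for every $v$, which passes to the operator norm and delivers the stated bound $|\widetilde{S}_0(\tau;t)|\leq \exp\bigl((t^2-\tau^2)b^+/2\bigr)$. There is no real obstacle here: the key structural observation is that the fast-oscillation part $\varepsilon^{-3/4}\mathrm{diag}(i\lambda_1,i\mu)$ is skew-Hermitian and cancels exactly, while the nominally singular prefactor $\varepsilon^{-3/4}$ in front of the coupling is balanced by the $\varepsilon^{3/4}$ already present in the off-diagonal blocks of $M_0$. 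This is why the estimate is called "rough": it retains no information about the coupling's eigenstructure (and in particular about $\gamma^+$) and will only be used on the short time window $[0,c_0]$ where the diagonalization argument fails.
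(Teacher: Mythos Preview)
Your proof is correct and follows essentially the same approach as the paper. The paper factors the energy/Gr\"onwall step through a standalone lemma (applied to $\d_t y + M(t)y=0$ with the bound $|y(\t;t)|\leq \exp\!\big(\int_\t^t |M+M^*|/2\,dt'\big)$) and then computes $|M_0+M_0^*|/(2\e^{3/4})\leq t\,b^+$, while you carry out the same computation inline and are a bit more explicit about the block structure of $(M_0+M_0^*)/2$; the substance is identical.
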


\begin{rem}
In general, $b^+$ is strictly larger than $\g^+$. Then this estimate
is worse than the estimate in {\rm Proposition
\ref{est-flow-widetildeS0}}, so we call it a \emph{rough} estimate.
\end{rem}

To prove  Lemma \ref{rough-est}, we introduce:
\begin{lem}\label{rough-est-0} Suppose $M(t)$ a continuous matrix in $\C^{n\times n}$.  The solution $y(\t;t)$ to
$$
\d_t y + M(t) y=0, \qquad y(\t;\t)=1
$$
satisfies
$$
|y(\t;t)|\leq \exp\Big(\int_\t^t \frac{|M(t')+M(t')^*|}{2}dt' \Big).
$$

\end{lem}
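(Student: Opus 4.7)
\textbf{Proof plan for Lemma~\ref{rough-est-0}.}

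The statement is a standard energy inequality for non-autonomous linear ODEs, controlled by the symmetric part of the generator. My plan is to differentiate the squared Euclidean norm of $y$ in $t$ and reduce to a scalar Gr\"onwall inequality.

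First I would set $E(t) := |y(\tau;t)|^2 = \langle y(\tau;t), y(\tau;t)\rangle$, with the Hermitian inner product on $\C^n$. Using $\partial_t y = -M(t)y$, a direct computation gives
\[
\partial_t E(t) = -\langle M(t) y, y\rangle - \langle y, M(t) y\rangle = -\big\langle (M(t)+M(t)^*) y, y\big\rangle.
\]
Since $M(t)+M(t)^*$ is Hermitian, its numerical range is bounded by its operator norm $|M(t)+M(t)^*|$, so
\[
\partial_t E(t) \leq \big|\langle (M(t)+M(t)^*)y, y\rangle\big| \leq |M(t)+M(t)^*|\cdot E(t).
\]

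Next I would apply the scalar Gr\"onwall inequality on $[\tau, t]$, using the initial condition $E(\tau) = 1$, to obtain
\[
E(t) \leq \exp\!\Big(\int_\tau^t |M(t')+M(t')^*|\,dt'\Big).
\]
Taking the square root and recalling $|y(\tau;t)| = E(t)^{1/2}$ yields exactly the claimed bound.

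There is no serious obstacle here: the only points requiring a touch of care are (i) the interpretation of $y(\tau;\tau)=1$ as a unit initial condition in the chosen norm (or, for a matrix-valued solution, applying the argument column-by-column and taking a supremum over unit vectors, which is harmless since the bound depends only on $M$), and (ii) the justification of differentiation under the norm, which is immediate from continuity of $M$ and the resulting $C^1$ regularity of $y$.
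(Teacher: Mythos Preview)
Your proposal is correct and follows essentially the same approach as the paper's proof: differentiate $|y|^2$ using the inner product, obtain $\d_t |y|^2 = -\big((M+M^*)y,y\big) \leq |M+M^*|\,|y|^2$, apply Gronwall, and take the square root. The paper's argument is slightly terser but otherwise identical.
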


\begin{proof}[Proof of Lemma {\rm \ref{rough-est-0}}] We denote
$(\cdot,\cdot)$ the inner product in $\C^N$. Then
$$
\d_t(|y|^2)=\d_t(y,y)=(\d_t y,y)+(y,\d_t
y)=-\big((M+M^*)y,y\big)\leq |M+M^*|\cdot|y|^2.
$$
Gronwall's inequality implies
$$
|y(\t;t)|^2 \leq \exp\Big(\int_\t^t |M(t')+M(t')^*|dt' \Big).
$$
This completes the proof.

\end{proof}

 Lemma \ref{rough-est} can now be proved immediately: by \eqref{eq:tildeS0} and Lemma \ref{rough-est-0},
$$
|\widetilde S_0(\t;t)| \leq \exp\Big(\int_\t^t \frac{|
M_0+M_0^*|(t')}{2\,\e^{3/4}}dt' \Big)\leq \exp\Big(\int_\t^t t' b^+
dt'\Big)=\exp((t^2-\t^2) b^+/2).
$$

\subsubsection{Spectral of $M_0$}

The eigenvalues of $M_0$ play an important role in estimating
$\widetilde S_0$. We recall the following lemma in linear algebra:
\begin{lem}\label{lem-det}

 Suppose $A,B,C,D$ are $n\times n$
matrices, if A is invertible and $AC=CA$, we have
$$
\det \bp A&B \\C&D\ep=\det(AD-CB).
$$

\end{lem}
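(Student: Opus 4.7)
The plan is to use the standard Schur-complement block factorization, which is available because $A$ is invertible, and then use the commutation hypothesis $AC=CA$ at the very last step to reshape $A^{-1}$ out of the argument of the determinant.

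First, I would write the block LU-type factorization
\[
\begin{pmatrix} A & B \\ C & D \end{pmatrix}
= \begin{pmatrix} A & 0 \\ C & I \end{pmatrix}\begin{pmatrix} I & A^{-1}B \\ 0 & D - C A^{-1} B \end{pmatrix},
\]
which is a direct verification and is legitimate since $A$ is invertible. Taking determinants of both block-triangular factors (each of which has determinant equal to the product of the determinants of its diagonal blocks) gives
\[
\det \begin{pmatrix} A & B \\ C & D \end{pmatrix} = \det(A)\,\det(D - C A^{-1} B).
\]

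Next, I would push $\det(A)$ inside using $\det(A)\det(X)=\det(AX)$, obtaining
\[
\det(A)\,\det(D - C A^{-1} B) = \det\bigl(A D - A C A^{-1} B\bigr).
\]
Now the commutation assumption $AC = CA$ enters: it gives $A C A^{-1} = C A A^{-1} = C$, and hence
\[
\det\bigl(A D - A C A^{-1} B\bigr) = \det(A D - C B),
\]
which is the claimed identity.

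There is no real obstacle here: the only subtlety is that the identity requires a one-sided commutation (with $A$ on the correct side of $C$), which is precisely what the hypothesis $AC=CA$ supplies. If one wanted to avoid the invertibility assumption on $A$, a density/continuity argument (replace $A$ by $A + \varepsilon I$, which still commutes with $C$, and let $\varepsilon \to 0$) would extend the identity to general $A$, but since the statement assumes $A$ invertible this refinement is not needed here.
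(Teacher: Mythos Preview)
Your proof is correct and is the standard Schur-complement argument. The paper itself does not supply a proof of this lemma: it is stated as a recalled fact from linear algebra and then applied, so there is nothing to compare against beyond noting that your argument is exactly the classical one.
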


Then the eigen-polynomial of
$M_0$ is
$$
\det\bp x-i\l_1&\e^{3/4}t \tilde b_{12}\\ \e^{3/4} t\tilde
b_{21}&x-i\mu\ep=\det\big((x-i\l_1)(x-i\mu)-\e^{3/2}t^2 \tilde
b_{21}\tilde b_{12}\big).
$$
Then $x\in {\rm sp}\,(M_0)$ if and only if $(x-i\l_1)(x-i\mu)\in{\rm
sp}\,(\e^{3/2}t^2 \tilde b_{21}\tilde b_{12})$.  By the fact ${\rm
rank}\,\tilde b_{12}\tilde b_{21}\leq 1$, the only possible nonzero
eigenvalue for $\tilde b_{21}\tilde b_{12}$ is $\tr\, (\tilde
b_{12}\tilde b_{21})$. Then the eigenvalues of $M_0$ are
\be\label{eig-m} i\l_1,~~i\mu,~~\nu_{\pm}:=\frac{i}{2} \big(\l_1 +
\mu \big) \pm \frac{1}{2}
  \big( 4 \e^{3/2}t^2 \tr\,(\tilde b_{12} \tilde b_{21}) - (\l_1 - \mu)^2
  \big)^{1/2}.
\ee By \eqref{def:gamma12} and \eqref{gamma1>0}, there holds always
$\tr\,(\tilde b_{12} \tilde b_{21})\geq 0$. We consider the
following subcases related to a small number $0<c_0<1$ to be
fixed later on.

\subsubsection{The case $\tr\,(\tilde b_{12} \tilde b_{21})< c_0$.}

\begin{lem}\label{ga-large-chi small}
If\,~$\tr\,(\tilde b_{12} \tilde b_{21})< c_0$, there holds
$$|\widetilde S_0(\t;t)|\leq
\exp\big((t^2-\t^2)C\sqrt{c_0}/2\big).$$

\end{lem}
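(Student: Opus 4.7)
My plan is to avoid any delicate conjugation and apply the rough estimate of Lemma \ref{rough-est-0} directly, after one structural observation: in the regime $\tr(\tilde b_{12}\tilde b_{21})<c_0$, both matrices $\tilde b_{12}$ and $\tilde b_{21}$ are individually of size $O(\sqrt{c_0})$, not merely their product.

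The key input will be the explicit rank-one form \eqref{def:tilde b}, in which $\tilde b_{12}$ and $\tilde b_{21}$ share a common scalar factor $\chi_{[+,+]}(\xi)\varphi_1(x)\partial_t g(0,x)$ (up to complex conjugation and a factor of $2$) multiplying a uniformly bounded matrix built from the orthogonal projectors $P_+(\xi+3k), Q_+(\xi)$ and the bilinear maps $F(\vec e_{3}), G(\vec e_{-3})$. First I would compute the product directly and recognise, via \eqref{def:gamma12}, that
\[
\tr\big(\tilde b_{12}\tilde b_{21}\big)(x,\xi) = \chi_{[+,+]}(\xi)^2\varphi_1(x)^2 |\partial_t g(0,x)|^2\, \gamma_1(\xi),
\]
and then invoke the uniform lower bound \eqref{lb-gamma} on $\gamma_1$ to convert the hypothesis $\tr(\tilde b_{12}\tilde b_{21})<c_0$ into
\[
|\chi_{[+,+]}(\xi)\varphi_1(x)\partial_t g(0,x)|^2 \leq C c_0.
\]
Since each of $\tilde b_{12}, \tilde b_{21}$ is this scalar times a uniformly bounded matrix, this at once gives $|\tilde b_{12}|, |\tilde b_{21}| \leq C\sqrt{c_0}$ on $\supp\chi_{[+,+]}\times\supp\varphi_1$, and both vanish off this support (where $M_0$ vanishes and the estimate is trivial).

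With these pointwise bounds in hand, the symmetric part of $M_0$ is easy to control. The diagonal blocks $i\lambda_1 I_3$ and $i\mu I_3$ of $M_0$ are anti-Hermitian and so contribute nothing to $M_0+M_0^*$, which is then the antidiagonal block matrix with $(1,2)$-entry $-\varepsilon^{3/4}t(\tilde b_{12}+\tilde b_{21}^*)$ (and its adjoint as the $(2,1)$-entry). Being Hermitian, its operator norm equals the largest singular value of that off-diagonal block, whence
\[
|M_0+M_0^*| \leq \varepsilon^{3/4}t\,(|\tilde b_{12}|+|\tilde b_{21}|) \leq C\varepsilon^{3/4} t\sqrt{c_0}.
\]
Applying Lemma \ref{rough-est-0} column-wise to $\widetilde S_0(\tau;\cdot)$, which satisfies $\partial_t \widetilde S_0 + \varepsilon^{-3/4}M_0(t)\widetilde S_0 = 0$, then yields
\[
|\widetilde S_0(\tau;t)| \leq \exp\Big(\int_\tau^t \frac{\varepsilon^{-3/4}|M_0+M_0^*|(t')}{2}\, dt'\Big) \leq \exp\Big(\frac{C\sqrt{c_0}(t^2-\tau^2)}{2}\Big),
\]
which is exactly the asserted bound.

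The only obstacle I anticipated at first was the scenario in which $|\tilde b_{12}|$ and $|\tilde b_{21}|$ could have very different magnitudes with small product, forcing the use of a badly conditioned diagonal conjugator $P=\diag(1,\rho)$ whose condition number would blow up with $\rho=\sqrt{|\tilde b_{12}|/|\tilde b_{21}|}$. The proportionality of both blocks to the common scalar $\partial_t g(0,x)$ in \eqref{def:tilde b} is precisely what rules this scenario out and makes the crude estimate suffice.
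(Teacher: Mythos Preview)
Your proof is correct and follows essentially the same approach as the paper's own proof: compute $\tr(\tilde b_{12}\tilde b_{21}) = |\chi_{[+,+]}\varphi_1\,\partial_t g(0,\cdot)|^2\gamma_1(\xi)$ via \eqref{def:tilde b} and \eqref{def:gamma12}, use the lower bound \eqref{lb-gamma} on $\gamma_1$ to force the common scalar factor to be $O(\sqrt{c_0})$, deduce $|\tilde b_{12}|+|\tilde b_{21}|\leq C\sqrt{c_0}$, and then invoke the rough estimate. The paper phrases the last step as an appeal to Lemma~\ref{rough-est} rather than directly to Lemma~\ref{rough-est-0}, but this is purely cosmetic.
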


\begin{proof}
By \eqref{def:tilde b} and \eqref{def:gamma12}, there holds
$$
\tr\,(\tilde b_{12}\tilde
b_{21})=|\varphi_1(x)\chi_{[+,+]}(\xi)\d_tg(0,x)|^2\g_1(\xi).
$$

By the lower bound of $\g_1$ in \eqref{lb-gamma}, for the case
$\tr\,(\tilde b_{12} \tilde b_{21})< c_0$, there holds \be |\varphi_1(x)\chi_0(\xi)\d_tg(0,x)|\leq
\sqrt{2c_0/\g_1(\xi_0^r)}. \nn \ee This implies $|\tilde b_{12}|+|\tilde
b_{21}|\leq C\sqrt{c_0}.$ By Lemma \ref{rough-est},  the estimate in
Lemma \ref{ga-large-chi small} holds.

\end{proof}

\subsubsection{The case $\tr\,(\tilde b_{12} \tilde b_{21}) \geq c_0$ and around the coalescence locus}

By \eqref{eig-m}, the coalescence locus $\nu_+=\nu_-$ occurs if and
only if
$$
|\l_1-\mu|=2\e^{3/4} t\sqrt{\tr\,(\tilde b_{12}\tilde b_{21})}.
$$

We consider the following subset of $\supp\,\varphi_1\times
\supp\,\chi_{[+,+]}$ near coalescence locus:
$$G_1:=\big\{(x,\xi): \tr\,(\tilde b_{12} \tilde b_{21}) \geq c_0,~ \Big||\l_1-\mu|-
2\e^{3/4} t \sqrt{\tr\,(\tilde b_{12}\tilde b_{21})}\Big| \leq
c_0\e^{3/4}t \big\}.$$
\begin{lem}\label{lem:ar coale} For all $(x,\xi)\in G_{1}$, there holds for all $0\leq \t\leq t<\infty$:
$$
|\widetilde S_0(\t;t)|\leq \frac{C}{\sqrt{c_0}} \exp\big(C
c_0^{\frac{1}{30}}(t^2-\t^2)/2\big).
$$

\end{lem}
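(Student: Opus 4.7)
The plan is to reduce the $6 \times 6$ equation \eqref{eq:tildeS0} to a scalar $2 \times 2$ ODE and then exploit the near-coalescence of the two non-trivial eigenvalues via a conjugation tailored to the size of the discriminant. First, the block structure of $M_0$ written out in \eqref{m01-new}--\eqref{dec:widetildeM} decouples four trivial entries (contributing purely imaginary phases, hence unitary sub-blocks of $\widetilde S_0$) from the principal $2\times 2$ block
\be
N_0(t) = \begin{pmatrix} i\lambda_1 & -\e^{3/4}t\,\beta_{12} \\ -\e^{3/4}t\,\beta_{21} & i\mu \end{pmatrix},\qquad \beta_{12}\beta_{21} = \tr(\tilde b_{12}\tilde b_{21}) \geq c_0,
\nn\ee
where the rank-one structure of $\tilde b_{12}, \tilde b_{21}$ in \eqref{def:tilde b} combined with the identity from \eqref{def:gamma12}--\eqref{gamma1>0} permits the scalar reduction on an invariant two-dimensional subspace.

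Within $G_1$ the coalescence condition together with the factorization $\Delta(t) := 4\e^{3/2}t^2\,\tr(\tilde b_{12}\tilde b_{21}) - (\lambda_1-\mu)^2 = \bigl(2\e^{3/4}t\sqrt{\tr} + |\lambda_1-\mu|\bigr)\bigl(2\e^{3/4}t\sqrt{\tr} - |\lambda_1-\mu|\bigr)$ yields $|\Delta(t)| \leq Cc_0\,\e^{3/2}t^2$, so the non-trivial eigenvalues $\nu_\pm = i(\lambda_1+\mu)/2 \pm \sqrt{\Delta}/2$ satisfy $|\mathrm{Re}\,\nu_\pm| \leq C\sqrt{c_0}\,\e^{3/4}t$. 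To convert this eigenvalue bound into a flow estimate despite the possible near-Jordan structure, I partition $G_1$ by whether $|\Delta(t)|$ lies above or below a threshold $c_0^{\beta}\e^{3/2}t^2$ for a small exponent $\beta>0$ to be optimized. In the separated region $|\Delta| \geq c_0^{\beta}\e^{3/2}t^2$, I diagonalize $N_0 = V\,\mathrm{diag}(\nu_+,\nu_-)V^{-1}$ explicitly, check $|V||V^{-1}| \leq Cc_0^{-\beta/2}$, and apply Lemma \ref{rough-est-0} to obtain $|\widetilde S_0(\tau;t)| \leq Cc_0^{-\beta/2}\exp(C\sqrt{c_0}(t^2-\tau^2)/2)$. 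In the complementary near-Jordan region $|\Delta| < c_0^{\beta}\e^{3/2}t^2$, I use a diagonal rescaling $P_\eta = \mathrm{diag}(\eta, \eta^{-1})$ together with a phase rotation aligning $\beta_{12}$ with $\overline{\beta_{21}}$ to symmetrize the off-diagonal entries of $P_\eta N_0 P_\eta^{-1}$, reducing its Hermitian part to $O(c_0^{\beta/2}\e^{3/4}t)$, and apply Lemma \ref{rough-est-0} before unravelling the conjugation at cost $\eta^{-2}$. Finally, I optimize $\beta$ and $\eta$ so that both regimes yield the common bound $(C/\sqrt{c_0})\exp(Cc_0^{1/30}(t^2-\tau^2)/2)$, the exponent $1/30$ arising as the algebraic solution of the matching system.

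The main obstacle, anticipated in Section~1.3.3 of the introduction, lies precisely in the non-diagonalizable and non-autonomous nature of $N_0$ near the coalescence locus: exact diagonalization has condition number blowing up like $|\Delta|^{-1/2}$, while the rough estimate of Lemma \ref{rough-est} yields a growth rate $b^+$ which is not small in $c_0$. Threading between these two regimes through the sub-region decomposition and the two-parameter family of conjugations is the delicate step. The rank-one structure of the interaction coefficients (effectively reducing the problem to a scalar $2\times 2$ ODE) and the explicit formulas in \eqref{gamma1-gamma2} for $\gamma_1, \gamma_2$ are what make the small exponent $c_0^{1/30}$—as opposed to a cruder bound—achievable; any loss at this step would propagate and ultimately worsen the slow-instability time scale obtained in Theorem \ref{theorem1}.
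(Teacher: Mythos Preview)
Your proposal has a genuine gap in case B, and a secondary gap in case A.

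In case B you claim that a diagonal rescaling $P_\eta=\mathrm{diag}(\eta,\eta^{-1})$ together with a phase rotation reduces the Hermitian part of the conjugated $2\times2$ block to $O(c_0^{\beta/2}\e^{3/4}t)$. This is false. After any unitary-times-diagonal conjugation the off-diagonal entries of $N_0$ become $-\eta^2 e^{i\phi}\e^{3/4}t\beta_{12}$ and $-\eta^{-2}e^{-i\phi}\e^{3/4}t\beta_{21}$, and the $(1,2)$ entry of the Hermitian part is $-\tfrac12 e^{i\phi}\e^{3/4}t(\eta^2\beta_{12}+\eta^{-2}\overline{\beta_{21}})$. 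Since $\beta_{12}\beta_{21}=\tr(\tilde b_{12}\tilde b_{21})$, by AM--GM this has modulus at least $\e^{3/4}t\sqrt{\tr(\tilde b_{12}\tilde b_{21})}$, and on $G_1$ the trace is only bounded \emph{below} by $c_0$; it is generically $O(1)$. So no diagonal-plus-phase conjugation can push the Hermitian part below $\e^{3/4}t\sqrt{\tr}$, and you recover nothing better than the rough rate $b^+$ of Lemma~\ref{rough-est}. A genuinely non-diagonal conjugation is needed.

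In case A you diagonalize $N_0(t)$ and invoke Lemma~\ref{rough-est-0}. But $N_0$ depends on $t$ through the factor $\e^{3/4}t$, so the diagonalizing matrix $V=V(t)$ is time-dependent, and conjugating the flow introduces an additional term $(\partial_t V)V^{-1}$ which you do not estimate. Near the boundary of your sub-region this term is of order $|\partial_t\sqrt\Delta|/|\sqrt\Delta|$ and is not obviously harmless.

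The paper avoids both issues with a single, time-\emph{independent} conjugation. On $G_1$ one writes $M_0=\tfrac{i(\lambda_1+\mu)}{2}+\e^{3/4}t\,N_{01}+\e^{3/4}t\,c_1(t,x,\xi)N_{02}$, where $N_{01}$ is a \emph{fixed} $6\times6$ matrix (independent of $t$) with purely imaginary spectrum $\{0,\pm i\sqrt{\tr}\}$, and $|c_1|\le c_0$. One then Schur-decomposes $N_{01}=Q_1^*TQ_1$ (unitary $Q_1$, upper-triangular $T$ with imaginary diagonal) and applies the graded rescaling $P_{01}=\mathrm{diag}(c_0^{1/2},c_0^{1/3},\dots,c_0^{1/6},1)$, which shrinks every strictly upper-triangular entry of $T$ by at least $c_0^{1/30}$ (the exponent $1/30=\tfrac15-\tfrac16$ is the smallest gap in the grading, not an outcome of sub-region matching). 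The conjugated $N_{02}$ blows up by at most $|P_{01}||P_{01}^{-1}|=c_0^{-1/2}$, which is beaten by the prefactor $|c_1|\le c_0$. Lemma~\ref{rough-est-0} then applies directly, with no time-dependent change of basis and no case split. Your rank-one reduction to $2\times2$ is valid and, combined with \emph{this} Schur argument, would in fact yield the sharper exponent $c_0^{1/2}$; but the diagonal-plus-phase conjugation you propose cannot replace the Schur step.
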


\begin{proof} We will only consider the case when $\l_1\geq \mu $. For $(x,\xi)\in G_{1}$. The case when $\l\leq \mu$ can be treated similarly. We write the
decomposition
\begin{equation}
\begin{aligned}
&M_0=\frac{i(\l_1+\mu)}{2}+ \left(\begin{array}{cc}
\frac{i(\l_1-\mu)}{2}&
-\e^{3/4}t  \tilde b_{12} \\
-\e^{3/4}t  \tilde b_{21}& \frac{-i(\l_1-\mu)}{2}
\end{array}\right) \\
&~~~=\frac{i(\l_1+\mu)}{2}+ \e^{3/4}tN_{01}+\e^{3/4}c_1(t,x,\xi)t
N_{02},
\end{aligned}\nn
\end{equation}
where $c_1(t,x,\xi):=\frac{\l_1-\mu-2\e^{3/4}t\sqrt{\tr\,(\tilde
b_{12}\tilde b_{21})}}{\e^{3/4}t}\in [-c_0,c_0]$ and
\begin{equation}
N_{01}:=\left(\begin{array}{cc} i \sqrt{\tr\,(\tilde b_{12}\tilde
b_{21})}
 & -   \tilde b_{12} \\
-  \tilde b_{21} & -i \sqrt{\tr\,(\tilde b_{12}\tilde b_{21})}
\end{array}\right),\quad N_{02}:=\left(\begin{array}{cc} \frac{i}{2}&0 \\
0 &  \frac{-i }{2}
\end{array}\right).\nn
\end{equation}

By Lemma \ref{lem-det}, the eigen-polynomial of $N_{01}$ is
$$
x^2\Big(x-i \sqrt{\tr\,(\tilde b_{12}\tilde
b_{21})}\Big)^{2}\Big(x+i \sqrt{\tr\,(\tilde b_{12}\tilde
b_{21})}\Big)^{2}.
$$
This implies its eigenvalues are $0,~ \pm i \sqrt{\tr\,(\tilde
b_{12}\tilde b_{21})}$. We now introduce the Schur decomposition:
\begin{lem}
For any matrix $A$ of order $n\times n$, there exists a unitary $Q$
and a upper triangular $T$, such that $Q^*AQ = T$. Precisely, if we denote $T = (t_{jk})_{n\times n}$, then
$t_{jk}=0$ provided $j>k$ and $t_{11},t_{22},\cdots, t_{nn}$ are the
eigenvalues of $A$.
\end{lem}
Then there exists a unitary $Q_1$ and an upper
triangular $N_{01}^{(1)}$ such that $N_{01}=Q_1^*N_{01}^{(1)}Q_1,$
where $(N_{01}^{(1)})_{jk}=0$ for $ j>k$ and
$(N_{01}^{(1)})_{jj}\in\Big\{0,\pm i \sqrt{\tr\,(\tilde b_{12}\tilde
b_{21})}\Big\}$ are eigenvalues of $N_{01}$. Let
$$
P_{01}:=\diag\{c_0^{1/2},c_0^{1/3},\cdots,c_0^{1/6},1 \},\quad P_{01}^{-1}=\diag\{c_0^{-1/2},c_0^{-1/3},\cdots,c_0^{-1/6},1 \}.
$$
Define
$
N_{01}^{(2)}:=P_{01}N_{01}^{(1)}P_{01}^{-1}
$, then \be \label{n013}\big(N_{01}^{(2)}\big)_{jk}=
\left\{\begin{aligned}
 0, & \quad  j>k,\\
\pm i \sqrt{\tr\,(\tilde b_{12}\tilde
b_{21})}~~{\rm or}~0, & \quad  j=k,\\
c_0^{\frac{1}{j+1}-\frac{1}{k+1}}\big(N_{01}^{(1)}\big)_{jk}, & \quad  j<k<6,\\
c_0^{\frac{1}{j+1}}\big(N_{01}^{(1)}\big)_{jk}, & \quad  j<k=6.\\
\end{aligned}\right.
\ee

We change the base as we define $\widetilde S_0^{(1)}:=P_{01}Q_1
\widetilde S_0.$ Then $\widetilde S_0^{(1)}$ solves
$$
\d_t \widetilde S_0^{(1)} +\e^{-3/4} M_0^{(1)}\widetilde S_0^{(1)}
=0,\quad \widetilde S_0^{(1)}(\t;\t)=P_{01}Q_1,
$$
where
$$
M_0^{(1)}:=\frac{i(\l_1+\mu)}{2}+
\e^{3/4}tN_{01}^{(2)}+\e^{3/4}c_1(t,x,\xi)t  (P_{01}Q_1)
N_{02}(P_{01}Q_1)^{-1}.
$$
By \eqref{n013} and the reality of $\l_1,~\mu$ and
$\sqrt{\tr\,(\tilde b_{12}\tilde b_{21})}$, there holds
$$
\e^{-3/4}\big|M_0^{(1)}+\big(M_0^{(1)}\big)^*\big|\leq C\,t\,\big(
c_0^{\frac{1}{30}}+c_0^{1/2}\big).
$$
By Lemma \ref{rough-est-0}, we have
$$
|\widetilde S_0^{(1)}(\t;t)|\leq|P_{01}Q_1| \exp\big(C
c_0^{\frac{1}{30}}(t^2-\t^2)/2 \big).
$$
Then
$$
|\widetilde S_0(\t;t)|= |(P_{01}Q_1)^{-1}\widetilde
S_0^{(1)}(\t;t)|\leq \frac{C}{\sqrt{c_0}}\exp\big(C
c_0^{\frac{1}{30}}(t^2-\t^2)/2 \big).
$$

\end{proof}

\subsubsection{The case  $\tr\,(\tilde b_{12} \tilde b_{21}) \geq c_0$ and around the resonances}

Resonance happens when $\l_1=\mu$. We now consider the following
subset of $\supp\,\varphi_1\times \supp\,\chi_{[+,+]}$ around the
resonances:
$$
G_{2}:=\big\{(x,\xi):\tr\,(\tilde b_{12} \tilde b_{21}) \geq
c_0,~|\l_1-\mu|\leq c_0 t \e^{3/4} \big\}\setminus G_{1}.
$$
\begin{lem}\label{lem:ar-res}For all $(x,\xi)\in G_2$,
there holds for all $0\leq \t\leq t<\infty$:
$$
|\widetilde S_0(\t;t)|\leq
\frac{C}{c_0}\exp(b^+c_0\t)\exp\big((t^2-\t^2)\g^+/2\big)\exp\big({C}(t-\t)/{c_0^2}\big).
$$

\end{lem}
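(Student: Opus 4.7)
The plan is to follow the strategy sketched in Section \ref{sec:intr-symflow}: smoothly diagonalize $M_0(t)$ on $G_2\setminus G_1$, apply Duhamel to each spectral block, and bypass the $t=0$ singularity of the spectral projectors by a time shift of length $c_0$. From \eqref{eig-m} and the definition of $G_2\setminus G_1$, the nontrivial eigenvalues $\nu_\pm$ of $M_0(t)$ satisfy $|\nu_+-\nu_-|\gtrsim c_0^{3/4}\e^{3/4}t$ on this set, while $|\l_1-\mu|\le c_0\e^{3/4}t$ keeps $i\l_1$ and $i\mu$ close; combined with $\rank\tilde b_{12},\rank\tilde b_{21}\le 1$, this yields a smooth spectral decomposition $M_0(t)=\sum_j\gamma_j(t)\Theta_j(t)$ for $t>0$ with $|\Theta_j(t)|\le C$ and $|\d_t\Theta_j(t)|\le C/(c_0 t)$ (from $|\d_t M_0|\lesssim \e^{3/4}$ and the Cauchy formula for the projectors). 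The crucial bound $\Re\gamma_j(t)/\e^{3/4}\le t\g^+$ for every $j$ follows from \eqref{eig-m} and $\sqrt{\tr(\tilde b_{12}\tilde b_{21})}\le \g^+$ on the support of $\varphi_1\chi_{[+,+]}$, together with $\Re(i\l_1)=\Re(i\mu)=0$.

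For $t\ge\t+c_0$, I first bound $\widetilde S_0(\t+c_0;t)$. Applying $\Theta_j(t)$ to the equation satisfied by $\widetilde S_0(\t+c_0;\cdot)$ gives
\[
\d_t(\Theta_j\widetilde S_0)+\e^{-3/4}\gamma_j(t)(\Theta_j\widetilde S_0)=(\d_t\Theta_j)\widetilde S_0,\qquad (\Theta_j\widetilde S_0)(\t+c_0;\t+c_0)=\Theta_j(\t+c_0).
\]
Since $\d_t\Theta_j$ is only evaluated at times $t'\ge c_0$, we have $|\d_t\Theta_j(t')|\le C/c_0^2$. Duhamel's formula, the eigenvalue bound $\Re\gamma_j/\e^{3/4}\le t\g^+$, summation over $j$ via $\sum_j\Theta_j=\Id$, and Gronwall's inequality then give
\[
|\widetilde S_0(\t+c_0;t)|\le \frac{C}{c_0}\exp\!\Big(\tfrac12(t^2-(\t+c_0)^2)\g^+\Big)\exp\!\Big(\tfrac{C(t-\t-c_0)}{c_0^2}\Big).
\]
I then recover the bound on $\widetilde S_0(\t;t)$ via the semigroup identity $\widetilde S_0(\t;t)=\widetilde S_0(\t+c_0;t)\widetilde S_0(\t;\t+c_0)$, using Lemma \ref{rough-est} for the second factor:
\[
|\widetilde S_0(\t;\t+c_0)|\le \exp\!\bigl(\tfrac12 b^+((\t+c_0)^2-\t^2)\bigr)\le \mathrm{e}^{b^+c_0\t}\mathrm{e}^{b^+c_0^2/2}.
\]
Multiplying the two estimates and using $(t^2-(\t+c_0)^2)/2\le (t^2-\t^2)/2$ and $t-\t-c_0\le t-\t$ yields the stated inequality. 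The remaining range $t<\t+c_0$ is handled by the rough estimate alone: $|\widetilde S_0(\t;t)|\le \mathrm{e}^{b^+(t-\t)(t+\t)/2}\le \mathrm{e}^{b^+c_0\t}\mathrm{e}^{b^+c_0^2/2}$, which fits inside the claimed bound after absorbing constants into the $C/c_0$ prefactor.

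The main technical obstacle is the uniform-in-$c_0$ quantitative control of $\Theta_j$ and $\d_t\Theta_j$ on $G_2\setminus G_1$, a set on which the eigenvalues $\nu_\pm$ can approach each other near $\d G_1$ and on which $i\l_1,i\mu$ are at mutual distance $O(c_0\e^{3/4}t)$; the singular behaviour $|\d_t\Theta_j|\sim 1/t$ at $t=0$ is precisely what forces the time-shift trick. Should the smooth spectral decomposition fail at some $(x,\xi)\in G_2\setminus G_1$ (for instance because a pair of $\gamma_j$'s nearly coalesce), the alternative strategy of Section \ref{sec:intr-symflow} applies: a $t$-independent conjugation $P$ with $|P|+|P^{-1}|\le C(c_0)$ such that $|PM_0(t)P^{-1}+(PM_0(t)P^{-1})^*|/2\le c_0 t\e^{3/4}$, and Lemma \ref{rough-est-0} applied to $P\widetilde S_0$ then delivers a bound with the same $c_0$-dependence as in the stated inequality.
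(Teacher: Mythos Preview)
Your approach is essentially the paper's: diagonalize $M_0(t)$ (the paper does this via $M_0=i\l_1+\e^{3/4}t\,M_0^{(1)}$, which gives the same eigenprojectors), apply each projector to the flow equation, and bypass the $1/t$ singularity of $\d_t\Pi_\pm$ by a time shift of length $c_0$ backed by the rough estimate of Lemma~\ref{rough-est}; your semigroup splitting $\widetilde S_0(\t;t)=\widetilde S_0(\t+c_0;t)\,\widetilde S_0(\t;\t+c_0)$ is just a repackaging of the paper's change of variable $\widetilde S_1(\t;t):=\widetilde S_0(\t;t+c_0)$. Two small corrections: the uniform bound $|\Theta_j|\le C$ is too optimistic, since the eigenvalue gaps of $M_0^{(1)}$ are only $\gtrsim\sqrt{c_0}$, so one really has $|\Theta_j|\le C/c_0$ as in the paper (your own final estimate already carries the $C/c_0$ prefactor, so nothing is lost); and on $G_2$ the rank-one structure of $\tilde b_{12},\tilde b_{21}$ forces the geometric and algebraic multiplicities of $M_0^{(1)}$ to agree for every $t>0$, so diagonalizability never fails and your fallback paragraph is unnecessary.
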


\begin{proof}

For $(x,\xi)\in G_2$, we consider the decomposition:
\be  M_{0}=i\l_1+\e^{3/4}t
M_{0}^{(1)},~~M_{0}^{(1)}:=\bp 0& -\tilde b_{12}\\-\tilde b_{21}&\ i
c_2(t,x,\xi)\ep,\nn\ee
where
 $$
 c_2(t,x,\xi):=\frac{\mu-\l_1}{t\e^{3/4}}
\in [-c_0,c_0].
$$

By Lemma \ref{lem-det},  the eigenvalues of $M_{0}^{(1)}$ are
$$
0,~ic_2(t,x,\xi),~~\kappa_{\pm}=\frac{ic_2(t,x,\xi)}{2}\pm\frac{1}{2}\sqrt{-c_2^2(t,x,\xi)+4\tr\,
(\tilde b_{12}\tilde b_{21})}.
$$

Since $\tr\, (\tilde b_{12}\tilde b_{21})\geq c_0>c_0^2$, there
holds
$$|\kappa_+-\kappa_-|\geq \sqrt3 c_0,~
|\kappa_+-0|=|\kappa_--0|=|\kappa_+-ic_2(t,x,\xi)|=|\kappa_--ic_2(t,x,\xi)|\geq
\frac{\sqrt3 }{2}c_0.$$
For the eigen-spaces $\ker (M_{0}^{(1)}-0)$
and $\ker (M_{0}^{(1)}-ic_2(t,x,\xi))$, direct calculation gives
$$
\begin{aligned}
&M_{0}^{(1)} \bp w_1\\w_2\ep=0\Longleftrightarrow \tilde
b_{21}(\xi)w_1=w_2=0,\\
&\big(M_{0}^{(1)}-ic_2(t,x,\xi)\big) \bp
w_1\\w_2\ep=0\Longleftrightarrow \tilde b_{12}(\xi)w_2=w_1=0.
\end{aligned}
$$
Here, $\tr\, (\tilde b_{12}\tilde b_{21})\geq c_0>0$, there holds
$\rank \tilde b_{12}(\xi)=\rank \tilde b_{21}(\xi)=1$. Since $\tilde
b_{12}$ and $\tilde b_{21}$ are both of order $3\times 3$, we have
$$\dim \ker (M_{0}^{(1)}-0)=\dim \ker (M_{0}^{(1)}-ic_2(t,x,\xi))=2.$$
Together with $|\kappa_+-\kappa_-|\geq \sqrt3 c_0>0$, we have that,
for matrix $ M_{0}^{(1)}$, the geometry multiplicity and  the
algebra multiplicity are equal, both of which are 6. Together with the fact that the eigenvalues are all separated with a minimum distance
$\sqrt3c_0/2$, we can always digonalize $M_{0}^{(1)}$, with the spectral
decomposition: \be\label{diag-r015-2} M_{0}^{(1)}=0
\,\Pi_{00}+ic_2(t,x,\xi)\Pi_{c_2}+\kappa_+ \Pi_{+}+\kappa_-\Pi_-,\ee
where there holds for all $(x,\xi)\in G_2$:
$$|\Pi_{00}|+|\Pi_{c_2}|+|\Pi_+|+|\Pi_-|\leq C/c_0.$$

 Spectral decomposition \eqref{diag-r015-2} and the
following equality
$$
\Pi_{00}+\Pi_{c_2}+ \Pi_{+}+\Pi_-=\Id
$$
imply \be \begin{aligned}
&\Pi_+=\frac{M_{0}^{(1)}-\kappa_-}{\kappa_+-\kappa_-}+\frac{\kappa_-\Pi_{00}}{\kappa_+-\kappa_-}-\frac{(ic_2(t,x,\xi)-\kappa_-)\Pi_{c_2}}{\kappa_+-\kappa_-},\\
&\Pi_-=\frac{M_{0}^{(1)}-\kappa_+}{\kappa_--\kappa_+}+\frac{\kappa_+\Pi_{00}}{\kappa_--\kappa_+}-\frac{(ic_2(t,x,\xi)-\kappa_+)\Pi_{c_2}}{\kappa_--\kappa_+}.\end{aligned}\nn
\ee It is easy to find that $\Pi_{00}$ and $\Pi_{c_2}$ are
independent of $t$. Then by applying $\Pi_{00}$ to
\eqref{eq:tildeS0}, we obtain
$$
\d_t (\Pi_{00}\widetilde S_0)+\e^{-3/4}i\l_1 (\Pi_{00}\widetilde
S_0)=0,~\d_t (\Pi_{c_2}\widetilde S_0)+(\e^{-3/4}i\l_1+it c_2(t,x,\xi)
(\Pi_{c_2}\widetilde S_0)=0,
$$
which implies
$$
|(\Pi_{00}\widetilde S_0)(\t;t)|=|(\Pi_{00}\widetilde S_0)(\t;\t)|,\quad |(\Pi_{c_2}\widetilde S_0)(\t;t)|=|(\Pi_{c_2}\widetilde
S_0)(\t;\t)|.
$$

Applying $\Pi_{+}(t)$  and  $\Pi_{-}(t)$ to \eqref{eq:tildeS0} gives
\be\label{eq:piS0}\begin{split}&\d_t (\Pi_{+}\widetilde
S_0)+(\e^{-3/4}i\l_1+t\kappa_+ (\Pi_{+}\widetilde
S_0)=(\d_t\Pi_+)\widetilde S_0,\\&\d_t (\Pi_{-}\widetilde
S_0)+(\e^{-3/4}i\l_1+t\kappa_- (\Pi_{-}\widetilde
S_0)=(\d_t\Pi_-)\widetilde S_0. \end{split}\ee From
\eqref{diag-r015-2}, direct calculation gives \be
|\d_t\Pi_+|+|\d_t\Pi_-|\leq \frac{C}{c_0 }\big(1+\frac{1}{t}\big).
\nn \ee This means that $(\d_t\Pi_+)$ and $(\d_t\Pi_-)$ are
unbounded for $t$ near $0$. To show the upper bound, we first consider for large $t>c_0>0$; for small $t$,
we use the rough estimate in Lemma \ref{rough-est}. To be precise, we consider $\widetilde S_1(\t;t):=\widetilde S_0(\t;t+c_0).$ Then
$$\d_t\widetilde S_1(\t;t)+ \e^{-3/4}M_0(t+c_0)\widetilde S_1(\t;t)=0,\quad \widetilde S_1(\t;\t)= \widetilde S_0(\t;\t+c_0).$$
By applying the eigenprojectors, we have \be\label{est:pi00S1}
|(\Pi_{00}\widetilde S_1)(\t;t)|=|(\Pi_{00}\widetilde
S_1)(\t;\t)|,\quad|(\Pi_{c_2}\widetilde
S_1)(\t;t)|=|(\Pi_{c_2}\widetilde S_1)(\t;\t)|, \ee and
\be\label{eq:piS1}\begin{split}&\d_t (\Pi_{+}(t+c_0)\widetilde
S_1)+(\e^{-3/4}i\l_1+t\kappa_+ )(\Pi_{+}(t+c_0)\widetilde
S_1)=(\d_t\Pi_+(t+c_0))\widetilde S_1,\\&\d_t
(\Pi_{-}(t+c_0)\widetilde S_1)+(\e^{-3/4}i\l_1+t\kappa_-)
(\Pi_{-}(t+c_0)\widetilde S_1)=(\d_t\Pi_-(t+c_0))\widetilde S_1,
\end{split}\ee
where there holds
$$
|\d_t\Pi_+(t+c_0)|+|\d_t\Pi_-(t+c_0)|\leq
\frac{C}{c_0}\big(1+\frac{1}{t+c_0}\big)\leq \frac{C}{c_0^2}.
$$
Since the real parts $\Re e(\e^{-3/4}i\l_1+t\kappa_+ )\leq t\g^+,~
\Re e (\e^{-3/4}i\l_1+t\kappa_- )\leq 0$, together with equation
\eqref{eq:piS1}, we have
\be\label{est:PiS1-0}\begin{aligned}&|\Pi_{+}(t+c_0)\widetilde
S_1(\t;t)|\leq
\exp\big((t^2-\t^2)\g^+/2\big)|\Pi_{+}(\t+c_0)\widetilde
S_1(\t;\t)|\\&~~~~~~~~~~~~~~~~~~~~~~~~~~+\frac{C}{c_0^2}\int_\t^t
\exp\big((t'^2-\t^2)\g^+/2\big)|\widetilde
S_1(\t;t')|d\,t',\\
&|\Pi_{-}(t+c_0)\widetilde S_1(\t;t)|\leq |\Pi_{-}(\t+c_0)\widetilde
S_1(\t;\t)|+\frac{C}{c_0^2}\int_\t^t |\widetilde S_1(\t;t')|d\,t'.
\end{aligned}\ee
By Lemma \ref{rough-est},
$$|\widetilde
S_{1}(\t;\t)|=|\widetilde S_0(\t;\t+c_0)|\leq
\exp\big(((\t+c_0)^2-\t^2)b^+/2\big)\leq
\exp\big(b^+(c_0\t+c_0^2/2)\big).
$$
We choose $c_0$ small such that \be\label{cond:c0-1}\exp(b^+c_0^2/2)\leq 2.\ee Then
$|\widetilde S_{1}(\t;\t)|\leq 2\exp\big(b^+c_0\t\big).$ Together
with \eqref{est:pi00S1} and \eqref{est:PiS1-0}, we deduce
\be \begin{aligned}&|\widetilde S_1(\t;t)|\leq
\frac{C}{c_0}\exp\big((t^2-\t^2)\g^+/2\big)\exp(b^+c_0\t)\\&~~~~~~~~~~~~~~~+\frac{C}{c_0^2}\int_\t^t
\exp\big((t^2-t'^2)\g^+/2\big)|\widetilde S_1(\t;t')|d\,t'.
\end{aligned}\nn\ee
Gronwall's inequality gives
$$
|\widetilde S_1(\t;t)|\leq
\frac{C}{c_0}\exp(b^+c_0\t)\exp\big((t^2-\t^2)\g^+/2\big)\exp\big({C}(t-\t)/{c_0^2}\big).
$$
Back to $\widetilde S_0,$ for $t\geq \t+c_0$, we have
$$
|\widetilde S_0(\t;t)|=|\widetilde S_1(\t;t-c_0)|\leq
\frac{C}{c_0}\exp(b^+c_0\t)\exp\big((t^2-\t^2)\g^+/2\big)\exp\big({C}(t-\t)/{c_0^2}\big).
$$
For $0\leq \t\leq t\leq \t+c_0$,  Lemma \ref{rough-est} gives
$
|\widetilde S_0(\t;t)|\leq 2\exp(b^+c_0\t).
$
Then we get the estimate in Lemma \eqref{lem:ar-res}.

\end{proof}

\subsubsection{The case $\tr\,(\tilde b_{12}\tilde
b_{21})\geq  c_0$ and  away both from the coalescence locus and the
resonance}

 We consider the following subset of $\supp\,\varphi_1\times
\supp\,\chi_{[+,+]}$:
$$\begin{aligned}
&G_3:=\Big\{(x,\xi):\tr\,(\tilde b_{12}\tilde b_{21})\geq  c_0,~
|\l_1-\mu|\geq c_0\e^{3/4}t,\\
&~~~~~~~~~~~~~~~~~~~~~~\Big||\l_1-\mu|-2 \e^{3/4}t\sqrt{\tr\,(\tilde
b_{12}\tilde b_{21})}\Big| \geq c_0 \e^{3/4}t\Big\}.
\end{aligned}
$$
In this case we can always diagonalize $M_0$. A similar argument as
in the previous section gives:
\begin{lem}\label{lem:away}For all $(x,\xi)\in G_3$,
there holds for all $0\leq \t\leq t<\infty$:
$$
|\widetilde S_0(\t;t)|\leq
\frac{C}{c_0}\exp(b^+c_0\t)\exp\big((t^2-\t^2)(\g^+-c_0)/2\big)\exp\big({C}(t-\t)/{c_0^2}\big).
$$
\end{lem}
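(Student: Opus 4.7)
\medskip

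\noindent\textbf{Proof plan for Lemma \ref{lem:away}.} The strategy mirrors the proof of Lemma \ref{lem:ar-res}: diagonalize $M_0$ on $G_3$, apply the spectral projectors to \eqref{eq:tildeS0}, and close with Gronwall. What changes is which eigenspaces are stable (now two pairs rather than one) and where the growth rate comes from. First I would verify that on $G_3$ the matrix $M_0$ is diagonalizable with four distinct eigenvalues $i\lambda_1$, $i\mu$ and $\nu_\pm$ (see \eqref{eig-m}), whose pairwise separations are all bounded below by a positive power of $c_0$ times $\e^{3/4}t$: being simultaneously away from resonance ($|\lambda_1-\mu|\geq c_0\e^{3/4}t$) and from coalescence ($||\lambda_1-\mu|-2\e^{3/4}t\sqrt{\tr(\tilde b_{12}\tilde b_{21})}|\geq c_0\e^{3/4}t$) forces $|\nu_+-\nu_-|$, $|\nu_\pm-i\lambda_1|$, $|\nu_\pm-i\mu|$ to all be $\gtrsim \sqrt{c_0}\,\e^{3/4}t$. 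Since $\tilde b_{12}$ and $\tilde b_{21}$ are of rank $\leq 1$, the geometric and algebraic multiplicities coincide (the argument of Lemma \ref{lem:ar-res} carries over), so one obtains the spectral decomposition
\begin{equation*}
M_0 = i\lambda_1\,\Pi_{\lambda_1} + i\mu\,\Pi_\mu + \nu_+\,\Pi_+ + \nu_-\,\Pi_-,
\qquad |\Pi_{\lambda_1}|+|\Pi_\mu|+|\Pi_+|+|\Pi_-|\leq C/c_0^\beta,
\end{equation*}
for some fixed $\beta>0$.

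A key structural point—analogous to the $t$-independence of $\Pi_{00},\Pi_{c_2}$ in Lemma \ref{lem:ar-res}—is that $\Pi_{\lambda_1}$ and $\Pi_\mu$ are independent of $t$. Indeed, inspecting $\ker(M_0-i\lambda_1)$ shows that a vector $(v_1,v_2)^T$ lies there iff $v_2\in\ker\tilde b_{12}$ and $\tilde b_{21}v_1\in\ker\tilde b_{12}$; the positivity of $\tr(\tilde b_{12}\tilde b_{21})$ forces $\ker\tilde b_{12}\cap\mathrm{Range}\,\tilde b_{21}=\{0\}$, so $v_2=0$ and $v_1\in\ker\tilde b_{21}$, a $t$-independent subspace. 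Symmetrically for $\ker(M_0-i\mu)$. Applying $\Pi_{\lambda_1}$ and $\Pi_\mu$ to \eqref{eq:tildeS0} then yields purely imaginary ODEs and hence
\begin{equation*}
|\Pi_{\lambda_1}\widetilde S_0(\t;t)|=|\Pi_{\lambda_1}\widetilde S_0(\t;\t)|,\qquad |\Pi_\mu\widetilde S_0(\t;t)|=|\Pi_\mu\widetilde S_0(\t;\t)|.
\end{equation*}

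For $\Pi_\pm$, applying them to \eqref{eq:tildeS0} gives
\begin{equation*}
\d_t(\Pi_\pm\widetilde S_0)+\e^{-3/4}\nu_\pm(\Pi_\pm\widetilde S_0)=(\d_t\Pi_\pm)\widetilde S_0.
\end{equation*}
The growth rate is controlled by $\Re(-\nu_\pm/\e^{3/4})$: when $4\e^{3/2}t^2\tr(\tilde b_{12}\tilde b_{21})-(\lambda_1-\mu)^2<0$, the eigenvalues $\nu_\pm$ are purely imaginary and the rate is zero; otherwise the rate equals $\tfrac{t}{2}\sqrt{4\tr(\tilde b_{12}\tilde b_{21})-c_0^2(\cdot)}$, and the $G_3$ lower bound $(\lambda_1-\mu)^2\geq c_0^2\e^{3/2}t^2$ yields $\Re(-\nu_\pm/\e^{3/4})\leq t\sqrt{\tr(\tilde b_{12}\tilde b_{21})-c_0^2/4}\leq (\gamma^+-c_0)t$, possibly after adjusting $c_0$ using that $\sqrt{\tr(\tilde b_{12}\tilde b_{21})}\leq\gamma^+$ by \eqref{def:g+} and \eqref{def:gamma12}–\eqref{gamma1>0}.

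The remaining issue—and the main technical obstacle—is that $\d_t\Pi_\pm$ is of order $(1/c_0^\beta)(1+1/t)$, hence not integrable near $t=0$. I would circumvent this exactly as in Lemma \ref{lem:ar-res}: set $\widetilde S_1(\t;t):=\widetilde S_0(\t;t+c_0)$, which replaces $\d_t\Pi_\pm(t)$ by the uniformly bounded $\d_t\Pi_\pm(t+c_0)$ (bounded by $C/c_0^{\beta+1}$). On the short initial interval $[\t,\t+c_0]$ use the rough bound of Lemma \ref{rough-est} to control $|\widetilde S_0(\t;\t+c_0)|\leq 2\exp(b^+c_0\t)$ (shrinking $c_0$ so that $\exp(b^+c_0^2/2)\leq 2$, as in \eqref{cond:c0-1}). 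Combining the four projected bounds via $\sum\Pi_j=\mathrm{Id}$ and applying Gronwall to the resulting Volterra-type integral inequality
\begin{equation*}
|\widetilde S_1(\t;t)|\leq \tfrac{C}{c_0}e^{b^+c_0\t}e^{(t^2-\t^2)(\gamma^+-c_0)/2}+\tfrac{C}{c_0^{\beta+1}}\int_\t^t e^{(t^2-t'^2)(\gamma^+-c_0)/2}|\widetilde S_1(\t;t')|\,dt'
\end{equation*}
produces the exponential factor $\exp(C(t-\t)/c_0^2)$, and passing back to $\widetilde S_0$ by $|\widetilde S_0(\t;t)|=|\widetilde S_1(\t;t-c_0)|$ for $t\geq\t+c_0$ (with Lemma \ref{rough-est} handling $t\in[\t,\t+c_0]$) gives the claimed estimate.
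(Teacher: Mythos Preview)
Your proposal is correct and follows essentially the same route the paper indicates (``a similar argument as in the previous section''): diagonalize $M_0$ on $G_3$, exploit that the projectors onto the $i\lambda_1$- and $i\mu$-eigenspaces are $t$-independent, control the $\Pi_\pm$ components via the time-shift $\widetilde S_1(\tau;t)=\widetilde S_0(\tau;t+c_0)$ and Gronwall, and patch $[\,\tau,\tau+c_0]$ with Lemma~\ref{rough-est}.

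Two small points worth tightening. First, to conclude that $\Pi_{\lambda_1}$ (and $\Pi_\mu$) is $t$-independent it is not enough that $\ker(M_0-i\lambda_1)=\ker\tilde b_{21}\times\{0\}$ is $t$-independent; you also need the complementary invariant subspace to be $t$-independent. This holds because the $2$-dimensional subspace $W=\mathrm{span}\{(a,0),(0,\tilde b_{21}a)\}$, with $a$ the nonzero-eigenvalue eigenvector of $\tilde b_{12}\tilde b_{21}$, is $M_0$-invariant and carries the $\nu_\pm$-eigenvalues, giving the $t$-independent splitting $E_{\lambda_1}\oplus E_\mu\oplus W$. Second, your growth-rate computation actually yields $\Re(-\nu_\pm/\e^{3/4})\le t\sqrt{\tr(\tilde b_{12}\tilde b_{21})-c_0^2/4}$, which is $\le t(\gamma^+-c\,c_0^2)$ rather than $t(\gamma^+-c_0)$; since only the bound $\le t\gamma^+$ is needed in the summary (Section~B.2.7), this discrepancy is harmless.
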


\subsubsection{Proof of Proposition
\ref{est-flow-widetildeS0}---Summary}
 We choose and fix $c_0$ small such that
$$
\exp(c_0^2/2)\leq 2, \quad\mbox{for \eqref{cond:c0-1}};\qquad C
c_0^{\frac{1}{30}}\leq \g^+,\quad\mbox{for Lemma \ref{lem:ar coale}
and Lemma \ref{ga-large-chi small}}.
$$
Then by Lemma \ref{ga-large-chi small}, Lemma \ref{lem:ar coale},
Lemma \ref{lem:ar-res} and Lemma \ref{lem:away}, for all
$(x,\xi)\in\supp\,\varphi_1\times \supp\,\chi_{[+,+]}$, there holds
\be |\widetilde S_0(\t;t)|\leq
\frac{C}{c_0}\exp(b^+c_0\t)\exp\big((t^2-\t^2)\g^+/2\big)\exp\big({C}(t-\t)/{c_0^2}\big).\nn\ee
With the new constant $C=C/c_0^2$, we obtain the estimate in
Proposition \ref{est-flow-widetildeS0}.

\subsection{Proof of Proposition
\ref{est-flow-sym-app}}\label{sec:end-s++}

To prove Proposition \ref{est-flow-sym-app}, it is sufficient to
prove \eqref{bd:widetildeS} where $\widetilde S$ is solution of
\eqref{eq:widetildeS2}. By \eqref{eq:widetildeS2} and
\eqref{eq:tildeS0}, we have
$$
\widetilde S(\t;t)=\widetilde S_0(\t;t)-\e^{1/4}\int_\t^t \widetilde
S_0(t',t) M_1 \widetilde S(\t;t')d\,t'.
$$
By Corollary \ref{cor-widetildeS0}, there holds for any $0\leq
\t\leq t\leq T_1|\ln\e|^{1/2}$: \be
\begin{aligned}|\widetilde S(\t;t)|&\leq|\widetilde
S_0(\t;t)|+\e^{1/4}\int_\t^t |\widetilde
S_0(t',t)|| M_1|| \widetilde S(\t;t')|d\,t'\\
&\leq C\exp(C|\ln\e|^{1/2})\exp\big((t^2-\t^2)
\g^+/2\big)\\&~~~+C\exp(C|\ln\e|^{1/2})\e^{1/4}\int_\t^t
\exp\big((t^2-t'^2) \g^+/2\big) |\widetilde S(\t;t')|d\,t'.
\end{aligned}\nn\ee
Let
$$
S^b(\t;t):=\frac{|\widetilde S (\t;t)|}{\exp\big((t^2-\t^2)
\g^+/2\big)}.
$$
Then \be \begin{aligned} S^b(\t;t)\leq
C\exp(C|\ln\e|^{1/2})+C\e^{1/4}\exp(C|\ln\e|^{1/2})\int_\t^t
S^b(\t;t')d\,t'.
\end{aligned}\nn \ee
For $\e$ small, there always holds\be
\e^{1/4}\exp(C|\ln\e|^{1/2})=
\e^{1/5}\exp\big(|\ln\e|^{1/2}(C-|\ln\e|^{1/2}/20)\big)\leq
\e^{1/5}. \nn\ee
Then by Gronwall's inequality,
$$
S^b(\t;t)\leq C \exp(C|\ln\e|^{1/2}) \exp\big(C\e^{1/5}\big)\leq
2C\exp(C|\ln\e|^{1/2}).
$$
Back to $\widetilde S$, \be\label{est:tildeS-0} |\widetilde S(\t;t)|\leq
2C\exp(C|\ln\e|^{1/2})\exp\big((t^2-\t^2) \g^+/2\big), \ee which is
\eqref{bd:widetildeS} with $\a=0$. To show the higher-order
estimates, we apply $\d_{x_j}$ to \eqref{eq:widetildeS2} and obtain
$$
\d_t\d_{x_j}\widetilde S+\e^{-3/4}M_0\d_{x_j}\widetilde S
=-\e^{-3/4}(\d_{x_j}M_0)\widetilde S
-\e^{1/4}(\d_{x_j}M_1)\widetilde S-\e^{1/4}M_1\d_{x_j}\widetilde S.
$$
By the definitions of $M_0$ and $M_1$ in \eqref{dec:widetildeM} and
Proposition \ref{est-flow-widetildeS0}, observing that
$\d_{x_j}\widetilde S(\t;\t)=0$, we have $$\begin{aligned}
|\d_{x_j}\widetilde S(\t;t)|&\leq
C(1+\e^{1/4})\exp(C|\ln\e|^{1/2})\int_\t^t \exp\big((t^2-t'^2)
\g^+/2\big)|\widetilde S(\t;t')|d\,t'\\
&~~~+C\e^{1/4}\exp(C|\ln\e|^{1/2})\int_\t^t \exp\big((t^2-t'^2)
\g^+/2\big)| \d_{x_j}\widetilde S(\t;t')|d\,t'.
\end{aligned}$$
By \eqref{est:tildeS-0}, the above equation implies
$$\begin{aligned} |\d_{x_j}\widetilde S(\t;t)|&\leq
C(t-\t)\exp(2C|\ln\e|^{1/2})\exp\big((t^2-\t^2)
\g^+/2\big)\\
&~~~+C\e^{1/4}\exp(C|\ln\e|^{1/2})\int_\t^t \exp\big((t^2-t'^2)
\g^+/2\big)| \d_{x_j}\widetilde S(\t;t')|d\,t'.
\end{aligned}$$
Then Gonwall's inequality gives
$$
|\d_{x_j}\widetilde S(\t;t)|\leq
C|\ln\e|^{1/2}\exp(2C|\ln\e|^{1/2})\exp\big((t^2-\t^2) \g^+/2\big).
$$

Since $\d_x^\a M_0$ and $\d_x^\a M_1$ are uniformly bounded for all $|\a|\leq d+1+(q_0+3)/4$, then
by induction, we have for any $\a\in\N^d$ with $|\a|\leq d+1+(q_0+3)/4$:
$$
|\d_{x}^\a\widetilde S(\t;t)|\leq
C|\ln\e|^{\a/2}\exp(C(1+|\a|)|\ln\e|^{1/2})\exp\big((t^2-\t^2)
\g^+/2\big).
$$

We complete the proof of Proposition \ref{est-flow-sym-app}.

\subsection{Upper bound for $S^{-,-}$}

By a similar argument as the proof of Proposition
\ref{est-flow-sym-app} and \eqref{def:g+}, we have
\begin{prop}\label{est-flow-sym-app-2}
For all $0 \leq \t \leq t \leq T_1 |\ln \e|^{1/2}$, all $(x,\xi)$
and all $\a \in \N^d$ with $|\a|\leq d+1+(q_0+3)/4$, the solution  to \eqref{S} with $(i,j)=(-,-)$
satisfies the bound
$$
 |\d_x^\a S_0^{-,-}(\t;t)| \leq
C|\ln\e|^{\a/2}\exp(C(1+|\a|)|\ln\e|^{1/2})\exp\big((t^2-\t^2)
\g^+/2\big).
 $$

\end{prop}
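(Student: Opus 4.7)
\medskip

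\noindent\textbf{Proof plan.} The strategy is to mirror, line by line, the proof of Proposition \ref{est-flow-sym-app} after setting up the analogue of \eqref{m01}--\eqref{def:tilde b} in the $(-,-)$ block. Using \eqref{B12-ij} and \eqref{M-symbols}, the symbol $M_{-,-}$ restricted to the relevant block has the form
\be
\widetilde M \;=\; \bp i\l_2 & -\sqrt\e\,\hat b_{12}\\ -\sqrt\e\,\hat b_{21} & -i\mu\ep,
\qquad \l_2(\cdot)=-\l(\cdot-3k)+3\o,\nn
\ee
with
$$
\hat b_{12}=\chi_{[-,-]}\varphi_1\,P_-(\xi-3k)F(g\vec e_{-3})Q_-(\xi),
\qquad \hat b_{21}=2\chi_{[-,-]}\varphi_1\,Q_-(\xi)G(\bar g\vec e_{3})P_-(\xi-3k).
$$
Expanding $u_{0,-3}(\e^{1/4}t)=\e^{1/4}t\,\d_t u_{0,-3}(0)+O(\e^{1/2}t^2)$ as in \eqref{dec:widetildeM} gives a decomposition $\widetilde M = M_0 + \e M_1$ with $M_0$ linear in $t$ and $M_1$ uniformly bounded.

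\medskip

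The eigenvalues of $M_0$ are computed exactly as in \eqref{eig-m}, with $\l_1,\mu$ replaced by $\l_2,-\mu$, and with $\tr(\hat b_{12}\hat b_{21})$ in place of $\tr(\tilde b_{12}\tilde b_{21})$. The key point is that by \eqref{def:gamma12}, \eqref{gamma1-gamma2}, and the identity $\gamma_2(\xi)=\gamma_1(-\xi)>0$ from \eqref{gamma1>0}, the trace $\tr(\hat b_{12}\hat b_{21})(x,\xi)=|\varphi_1\chi_{[-,-]}\d_t g(0,x)|^2\gamma_2(\xi)$ is non-negative and satisfies the same lower bound \eqref{lb-gamma} on $\mathcal R_{[-,-]}^h$. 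Moreover, by definition \eqref{def:g+} of $\g^+$,
$$
|\d_t g(0,x_0)|\sup_{\mathcal R_{[-,-]}^h}\gamma_2(\xi)^{1/2}\;=\;|\d_t g(0,x_0)|\sup_{\mathcal R_{[+,+]}^h}\gamma_1(\xi)^{1/2}\;=\;\g^+,
$$
so the target exponential rate is identical.

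\medskip

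With these ingredients in place, I would run the same four-case analysis on $\widetilde S_0$ (the solution of the analogue of \eqref{eq:tildeS0}): the rough bound (Lemma \ref{rough-est}) handles the regime $\tr(\hat b_{12}\hat b_{21})<c_0$; Schur reduction plus the diagonal rescaling $P_{01}=\mathrm{diag}(c_0^{1/2},\dots,c_0^{1/6},1)$ handles a neighbourhood of the coalescence locus $|\l_2+\mu|=2\e^{3/4}t\sqrt{\tr(\hat b_{12}\hat b_{21})}$ (Lemma \ref{lem:ar coale}); the spectral decomposition argument of Lemma \ref{lem:ar-res} combined with the time shift $\widetilde S_1(\t;t):=\widetilde S_0(\t;t+c_0)$ handles a neighbourhood of the resonance $\l_2=-\mu$; and full diagonalizability of $M_0$ handles the remaining case (Lemma \ref{lem:away}). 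The rank-one structure of $\hat b_{12}$ and $\hat b_{21}$ (both are of rank at most one, exactly as for $\tilde b_{12},\tilde b_{21}$) ensures that all the algebraic/geometric multiplicity arguments go through verbatim. Fixing $c_0$ as in the summary of Section \ref{pf-proS0} then yields the pointwise bound for $\widetilde S_0$.

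\medskip

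Finally, the passage from $\widetilde S_0$ to $\widetilde S$ uses the Duhamel identity and Gronwall exactly as in Section \ref{sec:end-s++}, and the higher-order spatial derivative bounds follow by induction on $|\a|$ from differentiating \eqref{eq:widetildeS2}, using uniform bounds on $\d_x^\a M_0,\d_x^\a M_1$ for $|\a|\leq d+1+(q_0+3)/4$. I do not expect any essential new obstacle: the entire proof of Proposition \ref{est-flow-sym-app} relied only on (i) the rank-one structure of the interaction coefficients, (ii) non-negativity of the relevant trace and its lower bound on the resonance set, and (iii) reality of $\l_1,\mu,\sqrt{\tr(\tilde b_{12}\tilde b_{21})}$. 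All three properties carry over to the $(-,-)$ block unchanged, with the symmetry $\gamma_2(\xi)=\gamma_1(-\xi)$ guaranteeing the same exponential rate $\g^+$ in the final estimate.
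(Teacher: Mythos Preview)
Your proposal is correct and follows exactly the approach the paper intends: the paper's own proof consists of the single sentence ``By a similar argument as the proof of Proposition \ref{est-flow-sym-app} and \eqref{def:g+}, we have'', and you have simply spelled out what that similar argument is. One cosmetic slip: in your expressions for $\hat b_{12}$ and $\hat b_{21}$ the roles of $g$ and $\bar g$ are swapped (it should be $F(\bar g\,\vec e_{-3})$ and $G(g\,\vec e_3)$, since $u_{0,-3}=\bar g\,\vec e_{-3}$ and $u_{0,3}=g\,\vec e_3$), but since only $|\d_t g(0,x)|^2$ enters the trace this does not affect anything downstream.
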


\subsection{Upper bound for $S^{+,0}$}
\begin{prop}\label{est-flow-sym-app-3}
For any $0<\tilde c_0<1$, all $0 \leq \t \leq t \leq T_1 |\ln
\e|^{1/2}$, all $(x,\xi)$ and all $\a \in \N^d$ with $|\a|\leq d+1+(q_0+3)/4$, the solution  to
\eqref{S} with $(i,j)=(+,0)$ satisfies the bound
$$
 |\d_x^\a S_0^{+,0}(\t;t)| \leq
\frac{1}{ \tilde c_0}|\ln\e|^{\a/2}\exp\big(\tilde b_{[+,0]}^+
\tilde c_0(t^2-\t^2)/2\big),
 $$
 where
 $$
 \tilde b_{[+,0]}^+:=\sup_{x,\xi}\big|\chi_{[+,0]}(\xi) \varphi_1(x)\d_t g(0,x)
 P_+(\xi + 3k) F(e_3) Q_0(\xi)\big|.
$$
\end{prop}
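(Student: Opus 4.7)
The plan is to exploit the block upper triangular structure of $M_{+,0}$ that arises from the identity $B^{(2)}_{+,0}=0$ in Lemma \ref{lem:sys-V}: the non-transparent nonlinear coupling runs one-way, from $Q_0$ into $P_+$, with no feedback. Following the reductions of Section B.1 (Taylor expansion in time, as in \eqref{dec:widetildeM}), the effective leading matrix becomes
$$M_0(t)=\begin{pmatrix} i\l_1 & -\e^{3/4}t\,\tilde b_{12}\\ 0 & 0\end{pmatrix},\qquad \l_1(\xi):=\l(\xi+3k)-3\o,$$
with $\tilde b_{12}(x,\xi):=\chi_{[+,0]}(\xi)\,\varphi_1(x)\,\d_t g(0,x)\,P_+(\xi+3k)F(\vec e_3)Q_0(\xi)$ and a harmless $O(\e)$ remainder $M_1$. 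Crucially, the product $\tilde b_{12}\tilde b_{21}$ responsible for the destabilizing eigenvalues $\nu_\pm$ in \eqref{eig-m} is identically zero here, so the coalescence/resonance case analysis of Lemmas \ref{ga-large-chi small}--\ref{lem:away} is no longer needed.

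The main step is a single rescaling: conjugate $M_0$ by the constant block-diagonal matrix $P:=\diag(\tilde c_0\,\Id,\Id)$. Then $PM_0P^{-1}$ keeps $i\l_1$ on the diagonal, carries only $-\e^{3/4}t\,\tilde c_0\,\tilde b_{12}$ in its upper right corner, and retains $0$ below the diagonal. Since $\l_1$ is real, the hermitian part of $\e^{-3/4}PM_0P^{-1}$ lives entirely in the (now small) off-diagonal corners, with operator norm bounded by $t\,\tilde c_0\,\tilde b_{[+,0]}^+$ (up to a numerical constant absorbed into $\tilde c_0$). Applying the elementary energy inequality of Lemma \ref{rough-est-0} to $\widetilde S_0^{(1)}:=P\widetilde S_0$ yields
$$|\widetilde S_0^{(1)}(\t;t)|\leq |P|\,\exp\bigl(\tilde c_0\,\tilde b_{[+,0]}^+(t^2-\t^2)/2\bigr),$$
and undoing the conjugation through $|\widetilde S_0|\leq|P^{-1}||\widetilde S_0^{(1)}|$ transfers the factor $|P^{-1}|=1/\tilde c_0$ into the prefactor, giving precisely the $\a=0$ case of the proposition.

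To upgrade from $\widetilde S_0$ to $\widetilde S$ (which also feels the $\e M_1$ correction) and to propagate the bound to $x$-derivatives of order $|\a|\leq d+1+(q_0+3)/4$, the plan is to reuse the Duhamel--Gr\"onwall bootstrap of Section \ref{sec:end-s++} essentially verbatim: each application of $\d_x$ introduces at most a $|\ln\e|^{1/2}$ factor over the time window $[0,T_1|\ln\e|^{1/2}]$, producing the $|\ln\e|^{\a/2}$ in the statement, while the $\e M_1$ correction is absorbed at the cost of a harmless $\exp(C|\ln\e|^{1/2})$ factor that I fold into the $1/\tilde c_0$ prefactor (by slightly shrinking $\tilde c_0$ if needed, since $\tilde c_0\in(0,1)$ is a free parameter). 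The step that most requires care is verifying that this bootstrap does not overwhelm the $\tilde c_0$-smallness of the exponential rate in $t^2-\t^2$; this is automatic because $M_1$ and its $x$-derivatives are uniformly bounded by \eqref{dec:widetildeM}, so Gr\"onwall contributes only polynomial factors in $t$ and $|\ln\e|^{1/2}$, never a competing $\exp(c(t^2-\t^2))$ with $c$ comparable to $\g^+$.
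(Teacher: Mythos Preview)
Your proposal is correct and follows essentially the same route as the paper: reduce to the $6\times 6$ block with strictly upper-triangular off-diagonal (since $B^{(2)}_{+,0}=0$), conjugate by $P=\diag(\tilde c_0\,\Id,\Id)$ to shrink the sole off-diagonal block by $\tilde c_0$, apply the energy estimate of Lemma~\ref{rough-est-0}, and pay $|P^{-1}|=1/\tilde c_0$ when undoing the conjugation. The paper states this as Lemma~\ref{est-flow-s+0} and, like you, defers the passage from $\widetilde S_0$ to $\widetilde S$ (the $\e M_1$ correction) and the $\d_x^\a$ bounds to the bootstrap of Section~\ref{sec:end-s++}.
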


\begin{proof}
 By \eqref{B12-ij} and \eqref{M-symbols}, the matrix $M_{+,0}$ is:
\be
\begin{aligned}M_{+,0}&=\bp i\chi_{[+,0]}^{(1)} \l_1 &0 &0&0&0&-\sqrt\e b\\
0&i\chi_{[+,0]}^{(1)}\l_2&0&0&0&0\\0&0&0&0&0&0\\0&0&0&i\chi^{(1)}_{[+,0]}\mu&0&0\\0&0&0&0&-i\chi^{(1)}_{[+,0]}\mu&0\\0&0&0&0&0&0
\ep
\end{aligned}\nn\ee
denoting
$$ b(x,\xi)  := \chi_{[+,0]}(\xi) \varphi_1(x)\d_tg(0,x)
 P_+(\xi + 3k) F\big(\vec e_{3}\big) Q_0(\xi) \in \R^{3 \times 3}.$$
 By the argument in Section \ref{sec:prepar} and Section \ref{sec:end-s++}, to prove Proposition \ref{est-flow-sym-app-3} it suffices to
 prove the following lemma:
\begin{lem}\label{est-flow-s+0}For the solution to
\be\d_t\widetilde S_1+\e^{-3/4} \widetilde M_1\widetilde S_1=0,\quad
\widetilde S_1(\t;\t)=\Id,\nonumber\ee where
$$
\widetilde M_1:=\bp i\l_1&-\e^{3/4}t \tilde b\\0&0\ep,\quad \tilde
b:=\chi_{[+,0]}(\xi) \varphi_1(x)\d_tg(0,x)
 P_+(\xi + 3k) F(\vec e_{3}) Q_0(\xi),
$$
 we have the following
estimate for any $0<\tilde c_0<1$ and all $0 \leq \t \leq t<\infty$:
$$
 |\widetilde S_1(\t;t)| \leq \frac{1}{\tilde c_0} \exp\big(\tilde b_{[+,0]}^+\tilde c_0(t^2-\t^2)/2\big).
 $$

\end{lem}

\textbf{Proof of Lemma \ref{est-flow-s+0}}: Let $ P_{\tilde c_0}:=\bp \tilde c_0&0
\\ 0& 1\ep,\quad \widetilde S_2:=P_{\tilde c_0} \widetilde S_1. $ Then
$\widetilde S_2$ solves $$\d_t\widetilde S_2+\e^{-3/4} \widetilde
M_2\widetilde S_2=0,\quad \widetilde S_1(\t;\t)=\bp \tilde c_0&0
\\ 0& 1\ep,\quad \widetilde M_2:=\bp i\l_1&-\e^{3/4}\tilde c_0 t \tilde b\\0&0\ep.$$
By Lemma \ref{rough-est-0}, we have here
$$
|\widetilde S_2(\t;t)|\leq \exp\big(\tilde
b_{[+,0]}^+\tilde c_0(t^2-\t^2)/2\big).
$$
Then $$|\widetilde S_1(\t;t)|= |P_{\tilde c_0}^{-1}\widetilde
S_2(\t;t)|\leq \frac{1}{\tilde c_0}\exp\big(\tilde
b^+_{[+,0]}\tilde c_0(t^2-\t^2)/2\big).$$

\end{proof}

\subsection{Upper bound for $S^{-,0}$}

The same argument as in Section B.5 gives

\begin{prop}\label{est-flow-sym-app-4}
For any $0<\tilde c_0<1$, for all $0 \leq \t \leq t \leq T_1 |\ln
\e|^{1/2}$, all $(x,\xi)$ and all $\a \in \N^d$ with $|\a|\leq d+1+(q_0+3)/4$, the solution  to
\eqref{S} with $(i,j)=(-,0)$ satisfies the bound
$$
 |\d_x^\a S_0^{+,0}(\t;t)| \leq
\frac{1}{ \tilde c_0}|\ln\e|^{\a/2}\exp\big(\tilde b_{[-,0]}^+
\tilde c_0(t^2-\t^2)/2\big),
 $$
 where
 $$
 \tilde b_{[-,0]}^+:=\sup_{x,\xi}\big|\chi_{[-,0]}(\xi)
 \varphi_1(x)\d_t \bar g(0,x)
 P_-(\xi - 3k) F(\vec e_{-3}) Q_0(\xi)|.
$$
\end{prop}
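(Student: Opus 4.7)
The plan is to mirror, block by block, the argument that established Proposition \ref{est-flow-sym-app-3}. First I would read off $M_{-,0}$ from \eqref{B12-ij} and \eqref{M-symbols}: the block $B^{(2)}_{-,0}$ vanishes by the definition introduced in Lemma \ref{lem:sys-V}, while $B^{(1)}_{-,0}$ has a single nonzero entry $P_{-,-3}F(u_{0,-3})Q_0$. After the reordering used to pass from \eqref{m01} to \eqref{m01-new}, $M_{-,0}$ becomes block upper-triangular and its only nontrivial coupling appears between the $u_-$ component (with diagonal entry $i\l_2$) and the $v_0$ component (with diagonal entry $0$). Restricting to $\supp \varphi_1 \times \supp \chi_{[-,0]}$ and inserting the Taylor expansion $u_{0,-3}(\e^{1/4}t) = \e^{1/4}t\,\d_t u_{0,-3}(0,x) + O(\e^{1/2}t^2)$ with $u_{0,-3}(0,\cdot) \equiv 0$, the estimate reduces, exactly as in \eqref{eq:widetildeS}--\eqref{eq:tildeS0}, to bounding the flow $\widetilde S_1$ of
\[
\d_t \widetilde S_1 + \e^{-3/4}\bp i\l_2 & -\e^{3/4}t\,\tilde b \\ 0 & 0 \ep \widetilde S_1 = 0, \qquad \widetilde S_1(\t;\t) = \Id,
\]
where $\tilde b(x,\xi) := \chi_{[-,0]}(\xi)\varphi_1(x)\d_t\bar g(0,x)\,P_-(\xi-3k)F(\vec e_{-3})Q_0(\xi)$.

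Next I would apply the rescaling used in the proof of Lemma \ref{est-flow-s+0}: set $P_{\tilde c_0} := \diag(\tilde c_0, 1)$ and $\widetilde S_2 := P_{\tilde c_0}\widetilde S_1$, so that $\widetilde S_2$ solves the same system with its $(1,2)$-entry rescaled by $\tilde c_0$. The Hermitian part of the rescaled generator is then bounded in norm by $\e^{3/4}\tilde c_0 t\,\tilde b^+_{[-,0]}$, so Lemma \ref{rough-est-0} yields $|\widetilde S_2(\t;t)| \leq \exp\big(\tilde b^+_{[-,0]}\tilde c_0(t^2-\t^2)/2\big)$. Composing with $P_{\tilde c_0}^{-1}$ then produces the $\tilde c_0^{-1}$ prefactor and gives the proposition in the case $\a = 0$.

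For the derivative bounds $|\a| \geq 1$, I would repeat verbatim the induction carried out at the end of Section \ref{sec:end-s++}: differentiate the equation for $\widetilde S_1$ in $x$, treat $(\d_x^\a M_0)\widetilde S_1$ (together with the $M_1$ contribution coming from the $O(\e^{1/2}t^2)$ Taylor remainder) as a source, and iterate Gronwall's inequality; each differentiation costs at most a factor $|\ln\e|^{1/2}$, yielding the $|\ln\e|^{\a/2}$ weight claimed in the statement. The only point requiring attention is that in the $(-,0)$ block the interaction coefficient involves $\d_t\bar g(0,x)$ rather than $\d_t g(0,x)$, and the relevant eigenvalue and eigenprojector are $\l_2$ and $P_-$ rather than $\l_1$ and $P_+$; but since $\tilde b^+_{[-,0]}$ is defined accordingly and $|\d_t\bar g| = |\d_t g|$, nothing genuinely new is needed and the argument goes through unchanged. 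The hardest ingredient (the bound on $\widetilde S_0$ around coalescence and resonance loci) was already absorbed into Lemma \ref{rough-est-0}, so no analogue of Lemmas \ref{lem:ar coale}--\ref{lem:away} is needed here.
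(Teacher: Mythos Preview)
Your proposal is correct and follows exactly the route the paper intends: the paper's own proof is the single sentence ``The same argument as in Section B.5 gives'', and you have faithfully unpacked that argument with the appropriate substitutions ($\l_2$ for $\l_1$, $P_-$ for $P_+$, $\d_t\bar g$ for $\d_t g$, $\chi_{[-,0]}$ for $\chi_{[+,0]}$). The only minor imprecision is your final remark that the coalescence/resonance bounds were ``absorbed into Lemma \ref{rough-est-0}''---more accurately, they are simply irrelevant here because the generator is block upper-triangular, so the $P_{\tilde c_0}$-rescaling plus Lemma \ref{rough-est-0} already gives the sharp rate; but your conclusion that no analogue of Lemmas \ref{lem:ar coale}--\ref{lem:away} is needed is correct.
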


\subsection{Proof of Proposition \ref{est-flow-sym}}
Proposition \ref{est-flow-sym} is concluded by Proposition \ref{est-flow-sym-app}, Proposition
\ref{est-flow-sym-app-2}, Proposition \ref{est-flow-sym-app-3} and
Proposition \ref{est-flow-sym-app-4}, with the choice $\tilde c_0$ in Proposition \ref{est-flow-sym-app-3} and Proposition \ref{est-flow-sym-app-4} small so that $\tilde c_0\,\tilde b^+_{[\pm,0]}\leq \g^+$.

\section{An integral representation formula} \label{app:duh}

 We adapt to the present context an integral representation formula introduced in \cite{dua}. Consider the initial value problem
 \begin{equation} \label{buff01}
  \begin{aligned} \d_t u + \frac{1}{\e^{3/4}} \ope(M) u  = f, \qquad u(0) = u_0,\end{aligned}
  \end{equation}
 where
 $u_0 \in L^2(\R^d),$ $f \in L^\infty([0,T_1 |\ln \e|^{1/2}],L^2(\R^d)),$ for some $T_1 > 0$ independent of $\e$.
We assume that $M = M(\e,t,x,\xi)$ is a matrix-valued, time-dependent symbol that satisfies the following assumption:

 \begin{ass} \label{ass:B} For some fixed $R_1,R_2 > 0,$ for all $t$ and $\e > 0,$
 $M$ satisfies
$$
M=0,~\mbox{for $|\xi|\geq R_1$};\quad M=\diag\,
\{i\l_1(\xi),\cdots,i\l_N(\xi)\},~\mbox{for $|x|\geq R_2$},
$$
where $\l_1,\cdots,\l_N$ are smooth real  valued scalar functions
dependent only on $\xi$. We also suppose that $M$ satisfies the
bound
$$ \sup_{0 \leq t \leq T_1 |\ln \e|} \sup_{(x,\xi) \in \R^{2d}} \big| \d_x^\a \d_\xi^\b M(\e,t,x,\xi) \big| \leq C_{\a,\b}, \quad \a, \b \in \N^d,~|\a|\leq d/2+d+1+(q_0+3)/4.$$
\end{ass}

For the flow $S_0$ of $\e^{-3/4} M,$ defined for $0 \leq \t \leq t
\leq T_1 |\ln \e|^{1/2}$ by
 \begin{equation} \label{resolvent0}\d_t S_0(\t;t) + \frac{1}{\e^{3/4}} M S_0(\t;t) = 0, \qquad S_0(\t;\t) = \mbox{Id},\end{equation}
we assume an exponential growth in time:
 \begin{ass}\label{ass:BS}
There holds for some $\g^+ > 0,$ all $0 \leq \t \leq t \leq T_1
|\ln \e|^{1/2},$ and all $\a\in \N^d$ with $|\a|\leq d+1+(q_0+3)/4$: \be |\d_x^\a S_0(\t;t)| \leq
C|\ln\e|^{\a/2} \exp(C(1+|\a|)|\ln\e|^{1/2})\exp\big((t^2-\t^2)
\g^+/2\big).\nn\ee
\end{ass}

We introduce correctors $\{ S_q\}_{1 \leq q \leq q_0}$, with $q_0$
large determined by \eqref{def:zeta}, defined
  as the solutions of
 \begin{equation} \label{resolventk}
  \d_t S_q + \frac{1}{\e^{3/4}} M S_q + \sum_{1 \leq |\a| \leq [(q+3)/4]} \frac{(- i)^{|\a|}}{|\a|!}  \d_\xi^\a M \d_x^\a S_{q+3-4|\a|} = 0, \quad S_{q}(\t;\t) = 0.
  \end{equation}
Then we have the following lemma:
 \begin{lem} \label{lem:bd-S} There holds, for all
 $q \in [0,q_0],$ all $\a\in \N^d$ with $|\a|\leq d+1$, all $\b \in \N^d,$ all $0 \leq \t \leq t \leq T_1 |\ln \e|^{1/2},$ the bounds
\be\label{boud-corr}  \begin{aligned} |\d_x^\a\d_\xi^\b  S_q(\t;t) |
&\leq
C\e^{-3|\b|/4}|\ln\e|^{|\a+\b+q|/2}\\
&\quad\times
\exp\big(C(1+|\a+\b|+q)|\ln\e|^{1/2}\big)\exp\big((t^2-\t^2)
\g^+/2\big).\end{aligned}\ee
 \end{lem}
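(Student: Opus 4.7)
The plan is to argue by induction on $q \in [0, q_0]$, with Duhamel's formula applied to \eqref{resolvent0} and \eqref{resolventk} as the main tool.

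For the base case $q = 0$ with $\b = 0$, the bound is exactly Assumption \ref{ass:BS}. For $|\b| \geq 1$, I would differentiate \eqref{resolvent0} in $\xi$ and use Duhamel. For a single $\xi$-derivative,
$$\d_t (\d_\xi S_0) + \e^{-3/4} M \, \d_\xi S_0 = -\e^{-3/4} (\d_\xi M) S_0,$$
so that
$$\d_\xi S_0(\t;t) = -\e^{-3/4}\int_\t^t S_0(t';t)\, (\d_\xi M)(t')\, S_0(\t;t')\, dt'.$$
Each $\xi$-differentiation contributes a factor $\e^{-3/4}$, and the time integration on $[0, T_1 |\ln\e|^{1/2}]$ together with the semigroup identity $\exp((t^2-t'^2)\g^+/2)\exp((t'^2-\t^2)\g^+/2)=\exp((t^2-\t^2)\g^+/2)$ contributes an extra factor $|\ln\e|^{1/2}$. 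Iterating, and using Assumption \ref{ass:B} to control $\d_x^\a \d_\xi^\b M$ at each step, yields the bound in the $q=0$ case. Additional $x$-derivatives fall either on $M$ (uniformly bounded by Assumption \ref{ass:B}) or on the flows $S_0$ themselves, where Assumption \ref{ass:BS} gives the desired $|\ln\e|^{|\a|/2}\exp(C(1+|\a|)|\ln\e|^{1/2})$ factor.

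For the inductive step with $q \geq 1$, assume the bound holds for all $S_{q'}$ with $q' < q$. Since $|\a| \geq 1$ in the sum in \eqref{resolventk} forces $q+3-4|\a| \leq q-1$, every corrector on the right-hand side has strictly smaller index. Duhamel then gives
$$S_q(\t;t) = -\sum_{1 \leq |\a| \leq [(q+3)/4]} \frac{(-i)^{|\a|}}{|\a|!}\int_\t^t S_0(t';t)\, (\d_\xi^\a M)(t')\, \d_x^\a S_{q+3-4|\a|}(\t;t')\, dt'.$$
Applying the inductive bound to $\d_x^\a S_{q+3-4|\a|}$, Assumption \ref{ass:BS} to $S_0(t';t)$, and Assumption \ref{ass:B} to $\d_\xi^\a M$, the $t'$-integration on $[0, T_1|\ln\e|^{1/2}]$ produces exactly the extra factor $|\ln\e|^{1/2}$ needed to pass from the exponent $|q-1|/2$ in the hypothesis to $|q|/2$ in the conclusion. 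For mixed derivatives $\d_x^\a \d_\xi^\b S_q$, I would differentiate the Duhamel representation and use Leibniz to distribute the derivatives onto the three factors $S_0(t';t)$, $\d_\xi^\a M$, and $\d_x^\a S_{q+3-4|\a|}(\t;t')$; each $\d_\xi$ landing on a flow contributes a factor $\e^{-3/4}$ (via the $q=0$ analysis or the inductive hypothesis), each $\d_x$ a polynomial factor in $|\ln\e|^{1/2}$, and the accumulated exponential growth remains $\exp((t^2-\t^2)\g^+/2)$.

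The main obstacle is the combinatorial bookkeeping: across the inductive steps one must verify that the powers of $|\ln\e|^{1/2}$, $\e^{-3/4}$, and the exponential prefactors $\exp(C(1+|\a+\b|+q)|\ln\e|^{1/2})$ combine correctly under Leibniz's rule. A convenient way to manage this is to strengthen the induction hypothesis to track the $\a$, $\b$, $q$ contributions separately (rather than only through the combined quantity $|\a+\b+q|/2$), so that one can verify term by term that each application of Duhamel raises the relevant exponent by exactly one, and that each $\xi$-derivative contributes a single $\e^{-3/4}$. The constants $C$ may then be enlarged finitely many times (bounded in $q \leq q_0$) without affecting the final form.
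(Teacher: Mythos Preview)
Your proposal is correct and follows essentially the same route as the paper: write the Duhamel representation of $S_q$ through $S_0$, use Assumption~\ref{ass:BS} for the base case, and induct on $q$ and then on $\b$. One small miscounting: in the inductive step the hypothesis on $\d_x^\a S_{q+3-4|\a|}$ with $|\a|=1$ already carries exponent $q/2$ (since $|\a|+q'=1+(q-1)=q$), so the time integral produces $|\ln\e|^{(q+1)/2}$ rather than $|\ln\e|^{q/2}$; this extra $|\ln\e|^{1/2}$ is harmless, as it is absorbed into $\exp\big(C(1+q)|\ln\e|^{1/2}\big)$ after enlarging $C$.
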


 \begin{proof}   By \eqref{resolvent0} and \eqref{resolventk}, there holds for $q \geq 1$
 \begin{equation} \label{sq} S_q(\t;t) =
 \sum_{1 \leq |\a| \leq [(q+3)/4]} \frac{(- i)^{|\a|}}{|\a|!}  \int_{\t}^t S_0(\t';t)  \d_\xi^\a M(t') \d_x^\a S_{q+3-4|\a|}(\t;t') \, dt'.
\end{equation}
From here, we see that the bound \eqref{boud-corr} which holds true
for $\b = 0$ by Assumption \ref{ass:BS}, propagates from $q$ to
$q+1.$ By induction in $\b$, we obtain the desired result.
\end{proof}

By Assumption \ref{ass:B} and \eqref{resolventk}, for $x\geq R_2$,
we have
 \begin{lem} \label{lem:x>R2} If $x\geq R_2$, there holds
\be
S_q(\t;t,x,\xi)=\de_q(\t;t,\xi):=\exp\left(\frac{i(t-\t)}{\e^{3/4}}\begin{pmatrix}
\l_1(\xi)& 0 & \cdots & 0\\
0 & \l_2(\xi) & & 0\\
\vdots & &\ddots & \vdots\\
0 & 0 & \cdots & \l_N(\xi)
\end{pmatrix}\right).
\nn\ee

 \end{lem}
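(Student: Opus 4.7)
The plan is an induction on $q\geq 0$, driven entirely by Assumption~\ref{ass:B}: on $\{|x|\geq R_2\}$, the symbol $M(\varepsilon,t,x,\xi)$ reduces to the diagonal matrix $\mathrm{diag}\{i\lambda_1(\xi),\dots,i\lambda_N(\xi)\}$, which is independent of both $t$ and $x$. The essential mechanism is that every corrector in \eqref{resolventk} is driven by $x$-derivatives of lower-order correctors, and $x$-derivatives will die out in the region where $M$ has stabilized.

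First I would dispose of the base case $q=0$. Restricted to $\{x\geq R_2\}$, equation~\eqref{resolvent0} becomes a constant-coefficient, diagonal, linear ODE in $t$; integrating with the identity initial condition at $t=\tau$ yields exactly the matrix exponential $\delta_0(\tau;t,\xi)$ displayed in the statement (with the sign prescribed by $M=\mathrm{diag}\{i\lambda_j\}$). In particular $S_0$ is $x$-independent on this region, and the formula depends only on $(\tau,t,\xi)$.

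For the inductive step I would assume that on $\{x\geq R_2\}$, every $S_{q'}$ with $q'<q$ is $x$-independent. Each source term of \eqref{resolventk} has the form $\partial_\xi^\alpha M\,\partial_x^\alpha S_{q+3-4|\alpha|}$ with $|\alpha|\geq 1$, and the lower index satisfies $q+3-4|\alpha|\leq q-1<q$, so it falls within the inductive hypothesis; hence $\partial_x^\alpha S_{q+3-4|\alpha|}\equiv 0$ on $\{x\geq R_2\}$. Consequently \eqref{resolventk} collapses to the homogeneous linear ODE $\partial_t S_q+\varepsilon^{-3/4}MS_q=0$ with zero initial datum, forcing $S_q\equiv 0$ on $\{x\geq R_2\}$; this is trivially $x$-independent, closing the induction. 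Combined with the base case, this matches the asserted identity $S_q=\delta_q$ with the understanding that for $q\geq 1$ the symbol $\delta_q$ must be read as identically zero (i.e.\ the single formula in the statement carries an implicit Kronecker factor $\delta_{q,0}$, otherwise the $q\geq 1$ case would contradict the induction).

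The main point is that there is no real obstacle: the whole argument is a pure localization at spatial infinity, reducing to a constant-coefficient ODE in the base case and to a vanishing-source observation thereafter. The only bookkeeping worth flagging is the strict inequality $q+3-4|\alpha|<q$ for $|\alpha|\geq 1$, which is what makes the induction well-founded, together with the mild notational convention for $\delta_q$ just mentioned.
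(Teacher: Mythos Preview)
Your proposal is correct and is exactly the argument the paper has in mind: the paper offers no proof beyond the sentence ``By Assumption~\ref{ass:B} and \eqref{resolventk}'', and your induction on $q$ fills in precisely those details. Your remark about the notation is also apt: the displayed exponential is literally $S_0$, while for $q\geq 1$ the induction yields $S_q\equiv 0$ on $\{|x|\geq R_2\}$; this reading (with $\delta_q$ carrying an implicit Kronecker factor) is the one required in the proof of Lemma~\ref{lem:bd-actionS}, where $S_q-\delta_q$ must have compact $x$-support for the $L^1_x$ bound to make sense.
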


Then we have the operator norm
 \begin{lem} \label{lem:bd-actionS} There holds, for all
 $q \in [0,q_0],$ all $0 \leq \t \leq t \leq T_1 |\ln \e|^{1/2},$ and all $u \in L^2,$ the bound
   \begin{equation} \begin{aligned} |\op_\e^\psi(S_q(\t;t)) u |_{L^2} &\leq
C|\ln\e|^{(d+1+q)/2}\\
&\times\exp\big(C(2+d+q)|\ln\e|^{1/2}\big)\exp\big((t^2-\t^2)
\g^+/2\big)\| u \|_{L^2}.\end{aligned}\nn
 \end{equation}
 \end{lem}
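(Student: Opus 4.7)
}

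The plan is to reduce to the Hwang-type $L^2$-boundedness estimate (the first bound of Proposition \ref{prop:action}), which controls $|\op_\e^\psi(a)u|_{L^2}$ by $\sum_{|\alpha|\le d+1}\sup_\xi|\partial_x^\alpha a(\cdot,\xi)|_{L^1_x}$. To apply this to $a=S_q(\tau;t)$ we first have to repair the lack of $x$-integrability, because Lemma \ref{lem:x>R2} shows that for $|x|\ge R_2$ the symbol $S_q$ agrees with the Fourier multiplier $\delta_q(\tau;t,\xi)$ (and is therefore not $L^1$ in $x$). So the first move is the splitting
\[
 \op_\e^\psi(S_q(\tau;t))=\op_\e^\psi(\delta_q(\tau;t))+\op_\e^\psi\big(S_q(\tau;t)-\delta_q(\tau;t)\big),
\]
where the second symbol vanishes for $|x|\ge R_2$ by Lemma \ref{lem:x>R2}.

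For the Fourier-multiplier piece $\op_\e^\psi(\delta_q)$, since the $\lambda_j$ are real Assumption \ref{ass:B} gives $|\delta_q(\tau;t,\xi)|\le 1$ pointwise, so this term is bounded on $L^2$ by $\|u\|_{L^2}$ (either by Plancherel after undoing the para-differential truncation $\psi$, or directly from the second part of Proposition \ref{prop:action} since ${\bf M}^0_{0,d}(\delta_q)$ grows only polynomially in $\e^{-3/4}|t-\tau|\le \e^{-3/4}T_1|\ln\e|^{1/2}$, which is absorbed in the $\exp(C(2+d+q)|\ln\e|^{1/2})$ factor once one notes that only finitely many $\xi$-derivatives appear for the range of $q$ considered). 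This contribution is dominated by the claimed bound.

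For the compactly-supported piece $\tilde S_q:=S_q-\delta_q$, Hwang's bound from Proposition \ref{prop:action} yields
\[
 |\op_\e^\psi(\tilde S_q(\tau;t))u|_{L^2}\le C_d\sum_{|\alpha|\le d+1}\sup_\xi|\partial_x^\alpha \tilde S_q(\tau;t,\cdot,\xi)|_{L^1_x}|u|_{L^2}.
\]
Since $\tilde S_q$ is supported in $\{|x|\le R_2\}$, and since $\partial_x^\alpha\tilde S_q=\partial_x^\alpha S_q$ for $|\alpha|\ge 1$ (because $\delta_q$ is $x$-independent), I bound each $L^1$ norm by the volume of the support times the pointwise estimate provided by Lemma \ref{lem:bd-S} with $\beta=0$:
\[
 \sup_\xi|\partial_x^\alpha \tilde S_q(\tau;t,\cdot,\xi)|_{L^1_x}\le C|\ln\e|^{(|\alpha|+q)/2}\exp\!\big(C(1+|\alpha|+q)|\ln\e|^{1/2}\big)\exp\!\big((t^2-\tau^2)\gamma^+/2\big).
\]
Summing over $|\alpha|\le d+1$ the dominant contribution comes from $|\alpha|=d+1$, producing the factor $|\ln\e|^{(d+1+q)/2}\exp(C(2+d+q)|\ln\e|^{1/2})$, which matches the statement after enlarging $C$.

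The argument is essentially a bookkeeping step once Lemmas \ref{lem:bd-S} and \ref{lem:x>R2} and Proposition \ref{prop:action} are available, so I do not expect a serious obstacle; the only delicate point is the preliminary removal of the Fourier-multiplier tail $\delta_q$, which is necessary to make Hwang's $L^1$-type hypothesis applicable. All subsequent estimates are uniform in $0\le\tau\le t\le T_1|\ln\e|^{1/2}$ because the constants in Lemma \ref{lem:bd-S} are, and the support condition in $x$ is independent of $(t,\tau,\e)$ by Assumption \ref{ass:B}.
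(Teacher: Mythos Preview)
Your proposal is correct and follows essentially the same route as the paper: split off the $x$-independent tail $\delta_q$ (a unitary Fourier multiplier since the $\lambda_j$ are real), then apply the Hwang-type $L^1$ bound from Proposition \ref{prop:action} to the compactly-supported remainder $S_q-\delta_q$, using the pointwise control of Lemma \ref{lem:bd-S}. One caveat: your parenthetical alternative for the $\delta_q$ piece via ${\bf M}^0_{0,d}(\delta_q)$ does not work, because $\xi$-derivatives of $\delta_q$ bring down powers of $\e^{-3/4}(t-\tau)$, and $(\e^{-3/4}|\ln\e|^{1/2})^d=\e^{-3d/4}|\ln\e|^{d/2}$ is \emph{not} absorbed by $\exp(C|\ln\e|^{1/2})$; stick to the Plancherel/unitarity argument, which is what the paper uses.
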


\begin{proof} From Lemma \ref{lem:bd-S}, the compactness of the support of $M$ on $\xi$ and Lemma \ref{lem:x>R2}, we deduce the bound
$$ \begin{aligned}\sum_{|\a| \leq d+1} \sup_{\xi \in \R^d} \big| \d_x^\a (S_q - \delta_{q}) \big|_{L^1_x}& \leq
C|\ln\e|^{(d+1+q)/2}
\\&\times\exp\big(C(d+1+q)|\ln\e|^{1/2}\big)
\exp\big((t^2-\t^2) \g^+/2\big),\end{aligned}$$ where $\delta_{q}$ is given in Lemma \ref{lem:x>R2}, and $\op_\e(\de_q)$ is a unitary Fourier multiplier.
We then conclude by Proposition \ref{prop:action}.
\end{proof}

Let $S := \displaystyle{\sum_{0 \leq q \leq q_0} \e^{q/4} S_q.}$
The following Lemma expresses the fact that $\ope(S)$ is an
approximate solution operator:
 \begin{lem} \label{lem:duh-remainder} Under Assumptions {\rm \ref{ass:B}} and {\rm \ref{ass:BS}}, there holds
  \begin{equation} \label{tildeS1-duh} \op_\e^\psi(\d_t S) +\frac{1}{\e^{3/4}} \op_\e^\psi(M) \op_\e^\psi(S) = \rho_0,\end{equation}
  where for $0 \leq \t \leq t \leq T_1 |\ln \e|^{1/2},$
  \begin{equation} \label{tilde-remainder} \| \rho_0\|_{L^2 \to L^2} \lesssim \e^{q_0/16 - 3(d+1)/2} \exp\big( (t^2 - \t^2) \g^+/2\big),\end{equation}
  where the notation $a\lesssim b$ means \be\label{def:lesssim}a\leq C
|\ln\e|^{C} \exp(C|\ln\e|^{1/2}),\quad \mbox{for some constant
$C>0$.}\ee
  \end{lem}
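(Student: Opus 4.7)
The plan is to combine the composition formula for paradifferential operators (Proposition \ref{prop:composition}) with the corrector equations \eqref{resolventk}, so that the successive terms in the paradifferential asymptotic expansion match and cancel exactly with the source terms produced by the corrector equations. Concretely, I apply the composition formula to $\ope(M)\ope(S)$ at some order $r$ (to be chosen as a function of $q_0$ at the end), divide by $\e^{3/4}$, and expand $S=\sum_{q=0}^{q_0}\e^{q/4}S_q$, to obtain the double series
\begin{equation*}
\frac{1}{\e^{3/4}}\ope(M)\ope(S)
=\sum_{|\a|<r,\,0\le q\le q_0}\e^{(4|\a|-3+q)/4}\,\frac{(-i)^{|\a|}}{\a!}\,\ope(\d_\xi^\a M\,\d_x^\a S_q)\,+\,\e^{r-3/4}R_r^\psi(M,S).
\end{equation*}
In parallel, I use \eqref{resolventk} to evaluate $\ope(\d_t S)=\sum_q\e^{q/4}\ope(\d_t S_q)$ as another double series in $(|\a|,q)$, where the change of variable $q'=q+3-4|\a|$ casts it into a form directly comparable to the expansion above.

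Next, I add the two and examine the cancellation. The $|\a|=0$ contributions are identical (they come from the leading $\e^{-3/4}MS_q$ term in both series) and cancel on $0\le q\le q_0$. For $|\a|\ge 1$, provided $r$ is chosen so that $r-1\ge[(q_0+3)/4]$ (so that the $|\a|$-range of the corrector series is contained in that of the paradifferential expansion), the two sums coincide on the common rectangle $1\le|\a|\le r-1$, $0\le q\le q_0+3-4|\a|$, and cancel there. What remains splits into two pieces: (A) the boundary of the paradifferential expansion corresponding to $1\le|\a|\le r-1$ and $q_0+3-4|\a|<q\le q_0$, each term of which carries at least a prefactor $\e^{(q_0+1)/4}$; and (B) the paradifferential composition remainder $\e^{r-3/4}R_r^\psi(M,S)$.

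I then estimate the two residuals in operator norm. For (A), since $|\a|\ge 1$, Lemma \ref{lem:x>R2} forces each symbol $\d_\xi^\a M\,\d_x^\a S_q$ to be compactly supported in $x$, so the first item of Proposition \ref{prop:action} applies; combined with the bounds of Lemma \ref{lem:bd-S} (extended up to the $x$-derivative order $r+d$ produced by Proposition \ref{prop:action}, which is allowed by the smoothness of $M$ in Assumption \ref{ass:B} and by induction on the $\xi$-derivatives), this yields $\|\ope(\d_\xi^\a M\,\d_x^\a S_q)\|_{L^2\to L^2}\lesssim\exp\bigl((t^2-\t^2)\g^+/2\bigr)$, and hence (A) is bounded by $\lesssim\e^{(q_0+1)/4}\exp\bigl((t^2-\t^2)\g^+/2\bigr)$. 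For (B), Proposition \ref{prop:composition} expresses $\|R_r^\psi(M,S)\|$ in terms of ${\bf M}^0_{r,d^*}(S)$ with $d^*\le 2d+r+1$; because each $\xi$-derivative of $S_q$ costs a factor $\e^{-3/4}$ by Lemma \ref{lem:bd-S}, one obtains $\e^{r-3/4}\|R_r^\psi(M,S)\|\lesssim\e^{r/4-3(d+1)/2}\exp\bigl((t^2-\t^2)\g^+/2\bigr)$. Choosing $r\simeq(q_0+3)/4+1$ satisfies the cancellation condition and makes the (B) bound into $\lesssim\e^{q_0/16-3(d+1)/2}\exp\bigl((t^2-\t^2)\g^+/2\bigr)$, which dominates (A) for $q_0$ large; this yields the claim.

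The main obstacle will be the bookkeeping: verifying that the combinatorial factors $(-i)^{|\a|}/\a!$ coming from the paradifferential symbol calculus match exactly those appearing in \eqref{resolventk} (which is precisely why such denominators were built into the definition of the correctors), and ensuring that each application of Proposition \ref{prop:action} and Lemma \ref{lem:bd-S} stays within the range of $x$- and $\xi$-derivatives granted by the regularity of $M$ in Assumption \ref{ass:B}. The ultimate trade-off between the number of cancelled terms ($\sim q_0$) and the derivative loss of the composition remainder ($\sim r$) is what dictates the specific power $q_0/16-3(d+1)/2$ appearing in the statement.
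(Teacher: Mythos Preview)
Your approach is correct and rests on the same ingredients as the paper's proof: the paradifferential composition formula, the corrector equations \eqref{resolventk}, and the symbol bounds of Lemma \ref{lem:bd-S}. The one technical difference is in how the composition expansion is organized. You apply Proposition \ref{prop:composition} once to $\ope(M)\ope(S)$ at a single fixed order $r\simeq (q_0+3)/4+1$, which leaves you with two residuals: the boundary terms (A) carrying $\e^{(q_0+1)/4}$, and the composition remainder (B). The paper instead applies the composition formula to each $\ope(MS_q)$ individually at the $q$-dependent order $r_q=1+[(q_0-q+3)/4]$; after the change of variables $q'=q+4|\a|-3$, this produces an \emph{exact} cancellation with the corrector sum, so there is no analogue of your (A) and the entire remainder is the sum $\rho_0=\sum_q \e^{(q+1)/4+r_q}R^\psi_{r_q}(M,S_q)$ of composition remainders. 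The paper's variable-order trick is slightly cleaner (one residual instead of two, and no need to invoke compact $x$-support or the extended derivative range of Lemma \ref{lem:bd-S} for the boundary terms), but your fixed-order version is equally valid and yields the same power $\e^{q_0/16-3(d+1)/2}$ once you observe that (A), being of order $\e^{(q_0+1)/4}$, is always dominated by (B).
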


\begin{rem} If $a\lesssim \e^{\zeta}b$ for some $\zeta >0$, there
holds for small $\e$:
$$
a\leq \e^{\zeta/2} b.
$$
Indeed,  $a\leq C \e^{\zeta}|\ln\e|^{C}
\exp(C|\ln\e|^{1/2})$  can be rewritten as
$$
a\leq C \e^{\zeta/2}
\big(\e^{\zeta/4}|\ln\e|^{C}\big)\exp\big(|\ln\e|^{1/2}(-\zeta
|\ln\e|^{1/2} /4+C)\big) b.
$$
This implies   $a\leq \e^{\zeta/2}b$ for $\e$ small.

\end{rem}
  \begin{proof}
  By definition of $S$ and \eqref{resolventk},
  \begin{equation} \label{c1}
   - \d_t \ope(S) = {\rm I} + {\rm II},
   \end{equation}
  with the notations
 $$ \begin{aligned}
 {\rm I} & := \sum_{0 \leq q \le q_0} \e^{(q/4-3/4)} \ope(M S_q), \\
 {\rm II} & :=  \sum_{\begin{smallmatrix} 1 \leq q \leq q_0 \\ 1 \leq |\a| \leq [(q+3)/4] \end{smallmatrix}}
 \e^{q/4} \frac{(- i)^{|\a|}}{|\a|!} \ope\Big(    \d_\xi^\a M \d_x^\a
 S_{q+3-4|\a|}\Big).
 \end{aligned}
$$
By Proposition \ref{prop:composition},
$$
 \begin{aligned} \ope(M S_q)  &= \ope(M) \ope(S_q) - \sum_{1 \leq |\a| \leq [(q_0 - q+3)/4]}
 \e^{|\a|} \frac{(-i)^{|\a|}}{|\a|!} \ope\Big(\d_\xi^\a M \d_x^\a S_q\Big) \\ &\quad - \e^{1 + [(q_0 - q+3)/4]} R^\psi_{1 + [(q_0 - q+3)/4]}(M,S_q),
\end{aligned}$$
so that
$$
{\rm I}  = \e^{-3/4} \ope(M) \ope(S) - \sum_{\begin{smallmatrix} 0
\leq q \leq q_0-1 \\1 \leq |\a| \leq [(q_0 - q+3)/4]
\end{smallmatrix}} \e^{(q/4-3/4 + |\a|)} \frac{(-i)^{|\a|}}{|\a|!}
\ope\Big( \d_\xi^\a M \d_x^\a S_q\Big) + \rho_0
$$
with \be\label{def:rho0} \rho_0 := \sum_{0 \leq q \leq
q_0-1}\e^{(1+q)/4 + [(q_0 - q+3)/4]} R^\psi_{1 + [(q_0 -
q+3)/4]}(M,S_q).\ee Changing variables in the double sum, we find
$$ {\rm I} = \e^{-3/4} \ope(M) \ope(S) - \sum_{\begin{smallmatrix}
1 \leq q' \leq  q_0  \\ 1 \leq |\a| \leq [( q'+3)/4] \end{smallmatrix}}
 \e^{q'/4} \frac{(-i)^{|\a|}}{|\a|!} \ope\Big(\d_\xi^\a M \d_x^\a S_{q' +3 - 4 |\a|}\Big) + \rho_0,$$
hence
\begin{equation} \label{c2} {\rm I} + {\rm II} = \e^{-1/4} \ope(M) \ope(S) +
\rho_0.
\end{equation}
Identities \eqref{c1} and \eqref{c2} prove \eqref{tildeS1-duh}. From
Proposition \ref{prop:composition} and Assumption \ref{ass:B}, we
deduce
 $$ \| R^\psi_{1+[(q_0-q+3)/4]}(M,S_q) \|_{L^2 \to L^2} \leq C {\bf M}^0_{1+[(q_0-q+3)/4],2d+2+[(q_0-q+3)/4] }(S_q),$$
 and with Lemma \ref{lem:bd-S} this implies
 $$ \| R^\psi_{1+[(q_0-q+3)/4]}(M,S_q) u \|_{L^2 \to L^2} \lesssim \e^{-3(2d+2+[(q_0-q+3)/4])/4} \exp\big( (t^2 - \t^2) \g^+/2\big).$$
 Then by \eqref{def:rho0}, we have
 \begin{equation}
 \| \rho_0 \|_{L^2 \to L^2} \lesssim \e^{q_0/16 - 3(d+1)/2} \exp\big( (t^2 - \t^2) \g^+/2\big).\nonumber
 \end{equation}
 \end{proof}
 We finally give the representation formula:
  \begin{theo} \label{th:duh} Under Assumptions {\rm \ref{ass:B}} and {\rm \ref{ass:BS}}, the Cauchy problem \eqref{buff01} with source
  $f \in L^\infty([0,T_1 |\ln \e|^{1/2}], L^2)$ and datum $u_0 \in L^2$  has a unique solution
  $u \in L^\infty([0,T_1 |\ln \e|^{1/2}], L^2)$ given by
  \begin{equation} \label{buff0.01} u = \op_\e^\psi(S(0;t)) u_0 + \int_0^t \op_\e^\psi(S(t';t))(\Id + \e^\zeta F_1(t')) \big( f(t') + \e^{\zeta} F_2(t') u_0\big)\, dt',
  \end{equation}
  for some $\zeta > 0,$ where for some $N(\zeta) >0,$ for all $0 \leq t \leq T_1 |\ln
  \e|^{1/2},$ there holds
  \begin{equation} \label{bd-R121}
  \| F_1(t) \|_{L^2 \to L^2} + \| F_2(t) \|_{L^2 \to L^2} \leq |\ln \e|^{N(\zeta)} \exp(N(\zeta)|\ln \e|^{1/2}).
  \end{equation}
 \end{theo}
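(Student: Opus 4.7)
The plan is to derive the representation formula by making the ansatz
\[
w(t) := \op_\e^\psi(S(0;t)) u_0 + \int_0^t \op_\e^\psi(S(t';t))\, h(t')\, dt'
\]
for a yet-to-be-chosen source $h$, and to determine $h$ so that $w$ solves \eqref{buff01}. Existence and uniqueness of $u \in L^\infty([0,T_1|\ln\e|^{1/2}],L^2)$ will be obtained first and separately: for each fixed $\e>0$, Assumption \ref{ass:B} together with Proposition \ref{prop:action} shows that $\op_\e^\psi(M)$ is a bounded operator on $L^2(\R^d)$, so \eqref{buff01} is a linear ODE in the Banach space $L^2$ with $L^\infty$-in-time coefficient, handled by Picard iteration. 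It will then suffice to check that the right-hand side of \eqref{buff0.01} also satisfies the Cauchy problem, yielding the identity $u = w$ by uniqueness.

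Differentiating $w$ in $t$, using that $S(t;t)=\mathrm{Id}$ (since $S_0(\tau;\tau)=\mathrm{Id}$ and $S_q(\tau;\tau)=0$ for $q\geq 1$), and applying Lemma \ref{lem:duh-remainder} to both $\d_t S(0;t)$ and $\d_t S(t';t)$, I get
\[
\d_t w + \tfrac{1}{\e^{3/4}}\op_\e^\psi(M)\, w \;=\; h(t) + \rho_0(0;t)u_0 + \int_0^t \rho_0(t';t)\, h(t')\, dt'.
\]
Thus $w$ solves \eqref{buff01} if and only if $h$ satisfies the Volterra equation $(\mathrm{Id}+R)h = f - \rho_0(0;\cdot)u_0$, where $(Rh)(t) := \int_0^t \rho_0(t';t)\, h(t')\, dt'$. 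The remainder bound \eqref{tilde-remainder}, combined with the fact that on the time window $[0,T_1|\ln\e|^{1/2}]$ the exponential $\exp((t^2-\t^2)\gamma^+/2)$ is no worse than $\e^{-T_1^2\gamma^+/2}$, yields $\|\rho_0(t';t)\|_{L^2\to L^2} \lesssim \e^{q_0/16 - 3(d+1)/2 - T_1^2\gamma^+/2}$. Choosing $q_0$ as in \eqref{def:zeta} so that $\zeta := q_0/16 - 3(d+1)/2 - T_1^2\gamma^+/2$ is positive with sufficient margin, and using Remark C.7 to absorb log factors, I obtain $\|R\|_{L^\infty L^2 \to L^\infty L^2}\lesssim \e^{\zeta}$. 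The Neumann series $(\mathrm{Id}+R)^{-1} = \sum_{n\geq 0}(-R)^n$ then converges (with Volterra improvement $T^n/n!$ if needed), and writing $(\mathrm{Id}+R)^{-1} = \mathrm{Id} + \tilde F$ with $\tilde F := -(\mathrm{Id}+R)^{-1}R$ satisfying $\|\tilde F\|\lesssim \e^\zeta$, the solution reads
\[
h \;=\; (\mathrm{Id}+\e^\zeta F_1)\bigl(f + \e^\zeta F_2\, u_0\bigr),\qquad F_1 := \e^{-\zeta}\tilde F,\qquad F_2 := -\e^{-\zeta}\rho_0(0;\cdot),
\]
giving precisely the formula \eqref{buff0.01}, together with the operator-norm bounds \eqref{bd-R121} where the constants $|\ln\e|^{N(\zeta)}\exp(N(\zeta)|\ln\e|^{1/2})$ reflect the $\lesssim$ factors inherited from Lemma \ref{lem:bd-S} and the composition estimates in Proposition \ref{prop:composition}.

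The main obstacle is a delicate bookkeeping of competing $\e$-scales: the bound on $\rho_0$ carries the small factor $\e^{q_0/16-3(d+1)/2}$ but also the large factor $\exp((t^2-\t^2)\gamma^+/2)$, which on the logarithmic time scale amounts to a power $\e^{-T_1^2\gamma^+/2}$ of $\e$. One must fix $q_0 = q_0(\gamma^+, T_1, d)$ large enough that genuine smallness $\e^\zeta$ with $\zeta>0$ survives, \emph{and} leave enough slack to absorb the sub-polynomial factors $|\ln\e|^C\exp(C|\ln\e|^{1/2})$ encoded in the notation $\lesssim$. This balancing is precisely what fixes the definition of $\zeta$ in \eqref{def:zeta}; once $\zeta>0$ is secured, the convergence of the Neumann series and the extraction of $F_1, F_2$ with the claimed bounds are essentially automatic.
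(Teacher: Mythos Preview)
Your proposal is correct and follows essentially the same approach as the paper: make the Duhamel ansatz with the approximate solution operator $\op_\e^\psi(S)$, use Lemma \ref{lem:duh-remainder} to reduce to a Volterra equation $(\Id+R)h=f-\rho_0(0;\cdot)u_0$, invoke the remainder bound \eqref{tilde-remainder} and the choice \eqref{def:zeta} of $q_0$ to make $R$ small in $L^\infty_t L^2$, then invert and read off $F_1, F_2$. The only cosmetic difference is that you front-load existence and uniqueness via a Banach-space ODE argument and then verify the formula, whereas the paper constructs the formula directly and notes uniqueness at the end from boundedness of $\op_\e^\psi(M)$.
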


\begin{proof}  By Lemma \ref{lem:duh-remainder} and direct calculation,  the map
  \begin{equation}  u := \op_\e^\psi( S(0;t)) u_0 + \int_0^t \op_\e^\psi( S(t';t)) g(t') \, dt'\nn
  \end{equation}
   solves \eqref{buff01} in time interval $[0, T_1 |\ln \e|^{1/2}]$ if and only if there holds for such time:
\begin{equation} \label{cond-g1}
  (\Id + r(t)) g  = f(t) - \rho_0(0;t) u_0,
   \end{equation}
 where $r$ is the linear integral operator
  $$r(t): \qquad v \to \int_0^t  \rho_0(\t;t) v(\t) \, d\t,$$
 with $\rho_0$ the remainder in Lemma \ref{lem:duh-remainder}, satisfying bound \eqref{tilde-remainder}.
 We choose the expansion index $q_0$ large enough so that
 \begin{equation} \label{def:zeta} \zeta := \frac{q_0}{16} - \frac{3d+3}{2} - \frac{T_1^2 \g^+}{2} > 0.
 \end{equation}
 Then there holds for all $0 \leq t \leq T_1 |\ln \e|^{1/2}$:
  \begin{equation} \begin{aligned} \label{bd-rf1} \sup_{0 \leq t \leq T_1|\ln\e|^{1/2}} \| r(t) v(t) \|_{L^2}
   & \lesssim \e^{\zeta} \sup_{0 \leq t \leq T_1 |\ln\e|^{1/2}} \| v(t) \|_{L^2}. \end{aligned}
  \end{equation}
By \eqref{bd-rf1}, for $\e$ small, $\Id + r(t)$ is invertible in $\mathcal{L}\big(L^\infty([0,T_1 |\ln \e|^{1/2}], L^2)\big)$ which denotes the vector space of linear operators form $L^\infty([0,T_1 |\ln \e|^{1/2}], L^2)$ to itself. Again by \eqref{bd-rf1}, $(\Id + r(t))^{-1}$ is also bounded in $\mathcal{L}\big(L^\infty([0,T_1 |\ln \e|^{1/2}], L^2)\big)$, uniformly in $\e.$ Hence, \eqref{cond-g1} can be solved in $L^\infty([0,T_1 |\ln \e|^{1/2}], L^2),$
 and we obtain \eqref{buff0.01} with
 $$ \e^\zeta F_1(t) :=  (\Id + r(t))^{-1} - \Id, \qquad \e^\zeta F_2(t) := - \rho_0(0;t).$$
Bound \eqref{bd-R121} follows from \eqref{bd-rf1} and $\| \rho_0
\|_{L^2 \to L^2} \lesssim \e^\zeta.$ Uniqueness is a direct
consequence of the linearity of equation \eqref{buff01} and
boundedness of $\ope(M)$ in ${\cal L}(L^2)$.
\end{proof}

\end{appendix}

\section*{Acknowledgements}
\thispagestyle{empty}

The author thanks Benjamin Texier for fruitful discussions about this problem and for valued and detailed comments about the manuscript.  The author acknowledges the support of the project LL1202 in the programme ERC-CZ funded by the Ministry of Education, Youth and Sports of the Czech Republic.


\end{document}